\documentclass[letterpaper]{article}
\usepackage[preprint]{aaai2027}
\usepackage[hyphens]{url}
\usepackage{graphicx}
\usepackage{natbib}
\usepackage{caption}
\usepackage{amsmath,amssymb,amsthm}
\usepackage{array}
\usepackage{bm}
\usepackage{enumerate}
\usepackage{mathrsfs}
\usepackage{algorithm}
\usepackage{algpseudocode}

\newcommand{\R}{\mathbb{R}}

\newcommand{\M}{\mathcal{M}}

\newcommand{\id}{\mathrm{id}}

\newcommand{\rank}{\mathrm{rank}}

\newcommand{\pf}{\mathrm{pf}}
\newcommand{\qf}{\mathrm{qf}}

\newcommand{\St}{\mathrm{St}}

\theoremstyle{plain}
\newtheorem{theorem}{Theorem}[section]
\newtheorem{lemma}[theorem]{Lemma}
\newtheorem{proposition}[theorem]{Proposition}

\newtheorem{problem}[theorem]{Problem}
\newtheorem{ur}[theorem]{Stepsize Rule}

\theoremstyle{definition}
\newtheorem{definition}[theorem]{Definition}

\theoremstyle{remark}
\newtheorem{remark}[theorem]{Remark}

\numberwithin{theorem}{section}
\numberwithin{equation}{section}

\makeatletter
\let\aaai@original@maketitle\maketitle
\def\aaai@singlecolumn[#1]{#1}
\renewcommand{\maketitle}{%
  \begingroup
  \let\twocolumn\aaai@singlecolumn
  \aaai@original@maketitle
  \endgroup
}
\makeatother

\title{Retraction-Based Gradient Projection Algorithms on Manifolds}
\author{Conglong Xu, Hao Wu\thanks{Phillips Hall, Room 739, 801 22nd Street, N.W., Washington, DC 20052, USA.
Emails: \texttt{xuconglong@gwmail.gwu.edu}, \texttt{haowu@gwu.edu}.
Correspondence to Hao Wu.}}
\affiliations{Department of Mathematics, The George Washington University}

\begin{document}

\onecolumn
\maketitle
\pagestyle{plain}
\thispagestyle{plain}

\begin{abstract}
We introduce a framework for retraction-based convex optimization on Riemannian manifolds, which includes a notion of retraction-specific convex sets and
retraction-based gradient projection algorithms. The standard theory of gradient projection algorithms generalizes easily to this framework.
Within this framework, we establish convergence results for retraction-based gradient projection algorithms with various stepsize rules.
As an application, we use our framework to study the weighted low-rank approximation.
We also provide numerical validation of our convergence results on the image completion task.
\end{abstract}

\section{Introduction}

In this paper, we extend the theory of gradient projection algorithms from Euclidean spaces to Riemannian manifolds.
More precisely, we introduce a notion of retraction-specific convex sets on manifolds in Section~\ref{sec-R-convexity}
and establish retraction-based gradient projection algorithms on manifolds and their convergence in Section~\ref{sec-rgpa}.
This provides a more general framework for convex analysis on manifolds than the existing theory of geodesic convexity.
In Section \ref{sec-wlra-constraint}, we apply our algorithms to the weighted low-rank approximation
and numerically validate their convergence on the image completion task.


\textbf{Related Work and Our Contributions.} The existing theory of geodesic convexity in differential geometry has motivated recent efforts
to extend the theory of convex optimization from Euclidean spaces to Riemannian manifolds
\cite{becigneul-ganea-2019-riemannian-adaptive,nguyen-et-al-2019-optimistic-likelihoods,alimisis-et-al-2021-momentum-riemannian,martinez-rubio-2022-global-riemannian-acceleration,weber-sra-2023-riemannian-frank-wolfe,weber-sra-2023-global-cccp,criscitiello-boumal-2023-curvature-complexity,hu-wang-abernethy-2023-riemannian-projection-free-online,fan-yang-vemuri-2023-horospherical-classification,han-et-al-2024-dp-riemannian,yao-chen-yang-2024-wasserstein-proximal-coordinate,wang-et-al-2024-augustin-information,roux-martinez-rubio-pokutta-2025-riemannian-optimism,cheng-weber-2026-structured-spd,carrillo-et-al-2026-fisher-rao,hirai-nieuwboer-walter-2026-interior-point-manifolds,vishnoi-2018-geodesic-convex-optimization}.
Our notion of retraction-specific convex sets explicitly encodes convexity on manifolds via their tangent bundles and generalizes the existing notion of geodesically convex sets.

The theory of gradient projection methods on Euclidean spaces is a classical topic in optimization.
Rosen's gradient projection methods for linearly and
nonlinearly constrained nonlinear programming \cite{rosen-1960-gradient-projection-linear,rosen-1961-gradient-projection-nonlinear}, along with
the constrained minimization work of Levitin and Polyak \cite{levitin-polyak-1966-constrained-minimization},
established the early theoretical framework for the projection-based feasible direction methods on Euclidean spaces.
The studies on Armijo's sufficient-decrease rule \cite{armijo-1966-minimization} and
the feasible-direction convergence \cite{topkis-veinott-1967-feasible-direction-convergence}
provide the backtracking line-search framework adapted in this paper.
The expository work in \cite{Bertsekas-nonlinear-programming} provides a detailed survey on the theoretical framework of the gradient projection methods on Euclidean spaces,
which inspired our theoretical framework.

The studies of Euclidean gradient projection methods led to numerous applications on, for example,
gradient projection methods for linearly constrained problems \cite{calamai-more-1987-projected-gradient},
active-set and bound-constrained quadratic programming methods \cite{more-toraldo-1991-bound-qp},
spectral projected-gradient methods \cite{birgin-martinez-raydan-2000-spg,birgin-martinez-raydan-2014-spg-review},
and projected Barzilai-Borwein methods \cite{dai-fletcher-2005-projected-bb}, among other problems.
But attempts to extend these methods to Riemannian manifolds are scarce.
Some recent works \cite{mahler-et-al-2024-on-manifold-pgd, xiong-et-al-2026-manifold-aware-gradient-projection} mentioned ``gradient projection onto manifolds''.
However, the algorithms they established are applications or extensions of the orthogonal projection of Euclidean gradients onto tangent spaces in some machine learning tasks, which are unrelated to convex optimization on manifolds.

The works in \cite{AMS, absil-malick-2012} systematically introduce the projection-like retractions on Riemannian manifolds and their use in matrix-manifold optimization
as computationally efficient approximations of exponential maps.
The work in \cite{Boumal-Book} gives a comprehensive treatment of retraction-based Riemannian optimization methods, including their applications in geodesically convex settings.
These expository works on Riemannian optimization motivated the retraction-based algorithm designed in this paper.
Moreover, the studies of the convergence and complexity of retraction-based Riemannian first-order methods
in \cite{bento-ferreira-melo-2017-riemannian-complexity} partly inspired our convergence results for the Riemannian gradient projection methods.

As an example, we apply our algorithms to the weighted low-rank approximation.
This problem is NP-hard \cite{gillis-glineur-2011-wlra-np-hard}.
The work in \cite{markovsky-2012-low-rank-approximation} supplies a broader survey on the background of the problem.
The work in \cite{manton-mahony-hua-2003-wlra-geometry} studies the geometry of the problem from a matrix-manifold viewpoint.
The work in \cite{werner-jansson-2006-reduced-rank-wlra} is an early attempt to address the problem from a Riemannian optimization perspective.
However, our approach to the problem is more inspired by Stiefel manifold optimization methods for orthogonality-constrained problems
(for example, see \cite{edelman-arias-smith-1998-orthogonality, jiang-dai-2015-stiefel-update-schemes}).
The weighted low-rank approximation is a classical modeling setting for multiple machine learning tasks
including matrix completion, robust principal component analysis, recommender-system matrix factorization,
and collaborative filtering \cite{candes-recht-2009-matrix-completion,koren-bell-volinsky-2009-recommender-mf}.
We numerically validate our algorithm on the image completion task,
which is a variation of the matrix completion task in the context of computer vision and image processing.

\section{Retraction-Specific Convex Sets on Manifolds}\label{sec-R-convexity}
We first recall the definition of retractions on differentiable manifolds and some basic properties of projections onto closed convex sets in inner product spaces.

\begin{definition}\cite[Definition 4.1.1]{AMS}\label{def-retraction}
Let $M$ be a differentiable manifold. A retraction on $M$ is a $C^1$ map $R:TM \rightarrow M$ such that, for every $x \in M$, the restriction $R_x=R|_{T_x M}$ satisfies
\begin{enumerate}
	\item $R_x(\mathbf{0}_x)=x$, where $\mathbf{0}_x$ is the zero vector in $T_x M$,
	\item $dR_x|_{\mathbf{0}_x}=\id_{T_x M}$ under the standard identification $T_{\mathbf{0}_x}T_x M \cong T_x M$, where $dR_x$ is the differential of $R_x$ and $\id_{T_x M}$ is the identity map of $T_x M$.
\end{enumerate}
\end{definition}

\begin{proposition}\cite[Proposition 1.1.4]{Bertsekas-nonlinear-programming}\cite[Proposition 2.1.3]{Bertsekas-nonlinear-programming-second-edition}
\label{prop-convex-projection}
Let $(V, \left\langle\ast,\ast \right\rangle)$ be a finite dimensional inner product space over $\R$, and $C$ a closed convex subset of $V$.
	\begin{enumerate}
		\item For every $\mathbf{v} \in V$, there is a unique vector $\Pi_C(\mathbf{v}) \in C$ that minimizes $\|\mathbf{v} -\mathbf{w}\|$ for all $\mathbf{w} \in C$,
		where $\|\ast\|$ is the norm induced by the inner product $\left\langle\ast,\ast \right\rangle$. $\Pi_C(\mathbf{v})$
		is called the projection of $\mathbf{v}$ on $C$.
		\item For a given $\mathbf{v} \in V$, a vector $\mathbf{w}^\ast \in C$ is equal to the projection $\Pi_C(\mathbf{v})$ if and only if
		\begin{equation}\label{eq-convex-proj-inequality-usual}
			\left\langle \mathbf{v} - \mathbf{w}^\ast , \mathbf{w} - \mathbf{w}^\ast \right\rangle \leq 0 \text{ for all } \mathbf{w} \in C.
		\end{equation}
		\item The mapping $\Pi_C: V \rightarrow C$ is continuous and non-expansive, that is,
		\begin{equation}\label{eq-convex-proj-non-expansive}
			\|\Pi_C(\mathbf{v}_1) - \Pi_C(\mathbf{v}_2)\| \leq \|\mathbf{v}_1 - \mathbf{v}_2\| \text{ for all } \mathbf{v}_1, \mathbf{v}_2 \in V.
		\end{equation}
	\end{enumerate}
\end{proposition}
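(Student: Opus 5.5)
Existence comes first. Fix $\mathbf{v}\in V$ and any $\mathbf{w}_0\in C$ (we assume $C\neq\emptyset$). Minimizing $\|\mathbf{v}-\mathbf{w}\|$ over $C$ is the same as minimizing it over $C\cap\{\mathbf{w}:\|\mathbf{v}-\mathbf{w}\|\le\|\mathbf{v}-\mathbf{w}_0\|\}$. Since $V$ is finite dimensional, this set is closed and bounded, hence compact, and it is nonempty because it contains $\mathbf{w}_0$. The norm is continuous, so a minimizer exists.

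For uniqueness I will minimize the squared distance $f(\mathbf{w})=\|\mathbf{v}-\mathbf{w}\|^2$, which has the same minimizers. This $f$ is strictly convex, because its Hessian is $2\,\id_V$. Concretely, suppose $\mathbf{w}_1\neq\mathbf{w}_2$ are both minimizers with common value $d^2$. Their midpoint lies in $C$ by convexity, and the parallelogram law gives
\[
\Bigl\|\mathbf{v}-\tfrac{\mathbf{w}_1+\mathbf{w}_2}{2}\Bigr\|^2=\tfrac12\|\mathbf{v}-\mathbf{w}_1\|^2+\tfrac12\|\mathbf{v}-\mathbf{w}_2\|^2-\tfrac14\|\mathbf{w}_1-\mathbf{w}_2\|^2<d^2,
\]
which is a contradiction.

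Part 2 is a variational-inequality argument. Assume $\mathbf{w}^\ast=\Pi_C(\mathbf{v})$ and take $\mathbf{w}\in C$. For $t\in[0,1]$ the point $\mathbf{w}^\ast+t(\mathbf{w}-\mathbf{w}^\ast)$ lies in $C$. Consider
\[
g(t)=\|\mathbf{v}-\mathbf{w}^\ast-t(\mathbf{w}-\mathbf{w}^\ast)\|^2 .
\]
It is minimized on $[0,1]$ at $t=0$, so $g'(0)=-2\langle\mathbf{v}-\mathbf{w}^\ast,\mathbf{w}-\mathbf{w}^\ast\rangle\ge 0$, which is \eqref{eq-convex-proj-inequality-usual}. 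For the converse, assume \eqref{eq-convex-proj-inequality-usual} holds and expand
\[
\|\mathbf{v}-\mathbf{w}\|^2=\|\mathbf{v}-\mathbf{w}^\ast\|^2-2\langle\mathbf{v}-\mathbf{w}^\ast,\mathbf{w}-\mathbf{w}^\ast\rangle+\|\mathbf{w}-\mathbf{w}^\ast\|^2\ge\|\mathbf{v}-\mathbf{w}^\ast\|^2 .
\]
So $\mathbf{w}^\ast$ is a minimizer, and by uniqueness it equals $\Pi_C(\mathbf{v})$.

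Part 3 follows from part 2. Write $\mathbf{p}_i=\Pi_C(\mathbf{v}_i)$. Apply \eqref{eq-convex-proj-inequality-usual} with $\mathbf{v}=\mathbf{v}_1,\ \mathbf{w}=\mathbf{p}_2$, and again with $\mathbf{v}=\mathbf{v}_2,\ \mathbf{w}=\mathbf{p}_1$. Adding the two inequalities gives
\[
\langle(\mathbf{v}_1-\mathbf{v}_2)-(\mathbf{p}_1-\mathbf{p}_2),\,\mathbf{p}_2-\mathbf{p}_1\rangle\le0,
\]
that is, $\|\mathbf{p}_1-\mathbf{p}_2\|^2\le\langle\mathbf{v}_1-\mathbf{v}_2,\mathbf{p}_1-\mathbf{p}_2\rangle$. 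Cauchy–Schwarz then yields \eqref{eq-convex-proj-non-expansive}. Non-expansiveness makes $\Pi_C$ $1$-Lipschitz, so $\Pi_C$ is continuous.

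None of these steps is difficult. The point that needs care is the existence argument: compactness of the bounded closed sublevel set relies on $V$ being finite dimensional, and we must assume $C$ is nonempty.
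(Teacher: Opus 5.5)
Your proof is correct and follows essentially the same route as the standard proof in Bertsekas, which the paper cites instead of giving its own: a compact sublevel set plus Weierstrass for existence, strict convexity of the squared distance for uniqueness, the first-order optimality condition for part 2, and adding the two variational inequalities followed by Cauchy--Schwarz for part 3. The differences are cosmetic: you check uniqueness with the parallelogram law and derive the variational inequality and its converse by direct expansion, rather than invoking the general optimality condition for convex minimization.
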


Now we can state our definition of retraction-specific convex subsets on manifolds.

\begin{definition}\label{def-R-convex}
Let $M$ be a Riemannian manifold and $R$ a retraction on $M$. For $x\in M$, denote by $\left\langle \ast,\ast\right\rangle_x$ the Riemannian inner product on $T_x M$. An $R$-convex subset of $M$ is a closed subset $C$ of $M$ equipped with an $R$-convexity bundle, that is, a collection of subsets $\{C_x \subset T_x M ~\big{|}~x \in C\}$ satisfying:
\begin{enumerate}
	\item (\textbf{Fiber-wise convexity}) $C_x$ is closed and convex in $T_x M$ for every $x \in C$.
	\item (\textbf{Local surjectivity}) For every $x \in C$,
	\begin{enumerate}[(a)]
		\item $R_x(C_x) \subset C$,
		\item $\mathbf{0}_x \in C_x$,
		\item there is an open neighborhood\footnote{Here, $V_x$ is an open set in $C_x$, not necessarily open in $T_x M$. Also, $R_x (V_x)$ is an open set in $C$, not necessarily open in $M$.} $V_x$ of $\mathbf{0}_x$ in $C_x$ such that $R_x(V_x)$ is an open neighborhood of $x$ in $C$ and $R_x|_{V_x}: V_x \rightarrow R_x (V_x)$ is a homeomorphism.
	\end{enumerate}
	\item (\textbf{Projective continuity}) For every $x \in C$, denote by $\Pi_{C_x}: T_x M \rightarrow C_x$ the projection map given by Proposition \ref{prop-convex-projection}(1) for the inner product space $(T_x M, \left\langle \ast,\ast\right\rangle_x)$ and the convex set $C_x$. If a sequence of points $\{x_k\}_{k=1}^\infty \subset C$ and a sequence of tangent vectors $\{\mathbf{v}_k \in T_{x_k} M\}_{k=1}^\infty$ satisfy $\lim_{k \rightarrow \infty} x_k = \tilde{x} \in C$ and $\lim_{k \rightarrow \infty} \mathbf{v}_k = \tilde{\mathbf{v}} \in T_{\tilde{x}} M$, then $\lim_{k \rightarrow \infty} \Pi_{C_{x_k}}(\mathbf{v}_k) = \Pi_{C_{\tilde{x}}}(\tilde{\mathbf{v}}) \in C_{\tilde{x}}$.
\end{enumerate}
\end{definition}

\begin{remark}\label{rk-convexity-def}
\begin{enumerate}
	\item In this paper, all $R$-convex sets are closed in $M$. Generally, the $R$-convexity bundle over a given $R$-convex set is not unique.
	However, in the rest of the paper, when we discuss an $R$-convex set, we always fix an $R$-convexity bundle over it.
	\item Every geodesically totally convex or geodesically strongly convex set is $R$-convex when the retraction $R$ is chosen to be the exponential map.
	\footnote{For definitions of geodesically totally convex and geodesically strongly convex sets,
	see Definitions 11.16 and 11.17 in \cite{Boumal-Book}.} However, an $R$-convex set is not necessarily geodesically totally convex or geodesically strongly convex.
	\item $R$-convexity is a local property. By the local surjectivity, every $R$-convex subset of $M$ is locally path connected.
	But an $R$-convex subset of $M$ can be disconnected. For example, the union of two disjoint non-empty closed convex subsets of $\R^p$ is disconnected. But one can check that this union is $R^{\R^p}$-convex, where $R^{\R^p}$ is the standard retraction on $\R^p$ given in Equation \eqref{eq-def-retraction-euclidean} below.
\end{enumerate}

\end{remark}


\section{Retraction-based Gradient Projection Algorithms on Manifolds}\label{sec-rgpa}

In this section, (1) $M$ is a Riemannian manifold with a retraction $R$;
(2) for every $x \in M$, $\left\langle \ast, \ast\right\rangle_x$ is the Riemannian inner product on $T_x M$, and $\|\ast\|_x$ is the norm on $T_x M$ induced by $\left\langle  \ast, \ast \right\rangle_x$;
(3) $f:M\rightarrow \R$ is a $C^1$ function; (4) $C$ is an $R$-convex subset of $M$ equipped with a fixed $R$-convexity bundle $\{C_x\}_{x \in C}$.
The goal of this section is to generalize gradient projection algorithms to $f|_C$.

\subsection{Retraction-based gradient projection}
We introduce our gradient projection algorithms in Algorithm \ref{algorithm-rgpa} below.
\begin{algorithm}[H]
\caption{Retraction-Based Gradient Projection}
\label{algorithm-rgpa}
\begin{algorithmic}[1]
\Require a Riemannian manifold $M$ with a retraction $R$, a $C^1$ objective function $f:M\to\R$,
an $R$-convex subset $C$ of $M$ with an $R$-convexity bundle $\{C_x\}_{x \in C}$,
and one of the stepsize rules \ref{ur-limited-mini}, \ref{ur-Armijo-feasible}, \ref{ur-Armijo-projection-arc}, or \ref{ur-constant} below for the sequences $\{\alpha_k\}$ and $\{s_k\}$
\For{$k=0,1,2,\ldots$}
    \State Let $\Pi_{C_{x_k}}:T_{x_k}M\to C_{x_k}$ be the projection onto $C_{x_k}$ given by Proposition \ref{prop-convex-projection}(1)
    \State $d_k \gets \Pi_{C_{x_k}}\bigl(-s_k\nabla f(x_k)\bigr)$
    \State $x_{k+1} \gets R_{x_k}(\alpha_k d_k)$
\EndFor
\end{algorithmic}
\end{algorithm}
In Algorithm \ref{algorithm-rgpa}, $\nabla f$ is the gradient of $f$, which is the dual of $df$ via the Riemannian inner product.
Note that, in Algorithm \ref{algorithm-rgpa}, $x_k \in C$ and $d_k \in C_{x_k}$ for $k = 0, 1, 2, \ldots$.

\subsection{Stepsize rules for Algorithm \ref{algorithm-rgpa}}
We consider the following stepsize rules for choosing the sequences $\{\alpha_k\}$ and $\{s_k\}$ in Algorithm \ref{algorithm-rgpa}.
They are adaptations of the corresponding rules for gradient projection algorithms on convex sets in $\R^n$ \cite[Section 3.3]{Bertsekas-nonlinear-programming}.

\begin{ur}[Limited Minimization Rule]\label{ur-limited-mini}
Fix a constant $s>0$,
\begin{enumerate}
	\item set $s_k =s$ for $k =0,1,2, \dots$
	\item choose $\alpha_k \in [0,1]$ so that
		\begin{equation*}
		f(R_{x_k}(\alpha_k \Pi_{C_{x_k}}(-s\nabla f (x_k))))
		= \min_{\alpha \in [0,1]} f(R_{x_k}(\alpha \Pi_{C_{x_k}}(-s\nabla f (x_k)))).
		\end{equation*}
\end{enumerate}
\end{ur}

\begin{ur}[Armijo Rule Along the Feasible Direction]\label{ur-Armijo-feasible}
Fix constants $s>0$ and $\beta, \sigma \in (0,1)$,
\begin{enumerate}
	\item set $s_k =s$ for $k =0,1,2, \dots$
	\item set $\alpha_k = \beta^{m_k}$, where $m_k$ is the smallest non-negative integer $m$ satisfying
		\begin{equation}\label{eq-Armijo-feasible-inequality}
		f(x_k) - f(R_{x_k}(\beta^m\Pi_{C_{x_k}}(-s\nabla f (x_k))))
		\geq -\sigma \beta^m \left\langle \nabla f(x_k), \Pi_{C_{x_k}}(-s\nabla f (x_k))\right\rangle_{x_k}.
		\end{equation}
\end{enumerate}
\end{ur}

\begin{ur}[Armijo Rule Along the Projection Arc]\label{ur-Armijo-projection-arc}
Fix constants $\bar{s}>0$ and $\beta, \sigma \in (0,1)$,
\begin{enumerate}
	\item set $\alpha_k =1$ for $k =0,1,2, \dots$
	\item set $s_k = \beta^{m_k}\bar{s}$, where $m_k$ is the smallest non-negative integer $m$ satisfying
		\begin{equation}\label{eq-Armijo-projection-arc-inequality}
		f(x_k) - f(R_{x_k}(\Pi_{C_{x_k}}(-\beta^{m}\bar{s}\nabla f (x_k))))
		\geq -\sigma \left\langle \nabla f(x_k), \Pi_{C_{x_k}}(-\beta^{m}\bar{s}\nabla f (x_k))\right\rangle_{x_k}.
		\end{equation}
\end{enumerate}
\end{ur}

\begin{ur}[Constant Stepsize]\label{ur-constant}
Fix a constant $s>0$,
\begin{enumerate}
	\item set $\alpha_k =1$ for $k =0,1,2, \dots$
	\item set $s_k = s$ for $k =0,1,2, \dots$
\end{enumerate}
\end{ur}

\subsection{Stationary points}

The definition of stationary points below is a generalization of that for stationary points on a convex subset of $\R^n$.

\begin{definition}\label{def-stationary-pt}
A point $x^\ast \in C$ is a stationary point of $f|_{C}$ if and only if $df|_{x^\ast}(\mathbf{v}) \geq 0$ for all $\mathbf{v}\in C_{x^\ast}$, where $df$ is the differential of $f$.
\end{definition}

\begin{lemma}\label{lemma-local-min-stationary}
$x^\ast\in C$ is a local minimum of $f|_C$ if and only if $\mathbf{0}_{x^\ast}$ is a local minimum of $(f \circ R_{x^\ast})|_{C_{x^\ast}}$, which implies that $x^\ast$ is a stationary point of $f|_C$.
\end{lemma}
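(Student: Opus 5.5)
The plan has two parts: the equivalence of local minima, and the stationarity consequence.

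\textbf{The equivalence.} The key tool is the local surjectivity condition (2c) of Definition~\ref{def-R-convex}. It provides an open neighborhood $V_{x^\ast}$ of $\mathbf{0}_{x^\ast}$ in $C_{x^\ast}$ such that
\[
R_{x^\ast}|_{V_{x^\ast}}: V_{x^\ast} \rightarrow R_{x^\ast}(V_{x^\ast})
\]
is a homeomorphism onto an open neighborhood of $x^\ast$ in $C$. By (2a), $R_{x^\ast}(C_{x^\ast}) \subset C$, so $f\circ R_{x^\ast}$ restricted to $C_{x^\ast}$ takes values among the values of $f|_C$.

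For the forward direction, suppose $x^\ast$ is a local minimum of $f|_C$. Then there is a neighborhood $U$ of $x^\ast$ in $C$ with $f(y)\ge f(x^\ast)$ for all $y\in U$. Set $W = (R_{x^\ast}|_{V_{x^\ast}})^{-1}(U\cap R_{x^\ast}(V_{x^\ast}))$. This set is open in $V_{x^\ast}$, hence open in $C_{x^\ast}$, and it contains $\mathbf{0}_{x^\ast}$ by (2b) and $R_{x^\ast}(\mathbf{0}_{x^\ast})=x^\ast$. For $\mathbf{v}\in W$ we get
\[
f(R_{x^\ast}(\mathbf{v}))\ge f(x^\ast)=f(R_{x^\ast}(\mathbf{0}_{x^\ast})),
\]
so $\mathbf{0}_{x^\ast}$ is a local minimum of $(f\circ R_{x^\ast})|_{C_{x^\ast}}$.

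For the converse, suppose there is a neighborhood $W$ of $\mathbf{0}_{x^\ast}$ in $C_{x^\ast}$ on which $f\circ R_{x^\ast}\ge f(x^\ast)$. Replace $W$ by $W\cap V_{x^\ast}$. Its image $R_{x^\ast}(W\cap V_{x^\ast})$ is open in $R_{x^\ast}(V_{x^\ast})$ because $R_{x^\ast}|_{V_{x^\ast}}$ is a homeomorphism. It is therefore open in $C$, since $R_{x^\ast}(V_{x^\ast})$ is open in $C$. Every point $y$ of this image equals $R_{x^\ast}(\mathbf{v})$ for some $\mathbf{v}\in W\cap V_{x^\ast}$, so $f(y)\ge f(x^\ast)$. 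Hence $x^\ast$ is a local minimum of $f|_C$. The only care needed here is tracking relative topologies, which is what the homeomorphism clause is for.

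\textbf{The stationarity consequence.} Let $\mathbf{v}\in C_{x^\ast}$. By fiber-wise convexity and $\mathbf{0}_{x^\ast}\in C_{x^\ast}$, the segment $t\mathbf{v}$ lies in $C_{x^\ast}$ for $t\in[0,1]$. For small $t>0$, $t\mathbf{v}$ lies in the neighborhood where $\mathbf{0}_{x^\ast}$ is minimal, so $g(t)=f(R_{x^\ast}(t\mathbf{v}))\ge g(0)$. The function $g$ is $C^1$, and by the chain rule together with $dR_{x^\ast}|_{\mathbf{0}_{x^\ast}}=\id$,
\[
g'(0)=df|_{x^\ast}(\mathbf{v}).
\]
Since $g(t)\ge g(0)$ for small $t>0$, the one-sided difference quotients $(g(t)-g(0))/t$ are nonnegative, and taking $t\to0^+$ gives $df|_{x^\ast}(\mathbf{v})\ge0$. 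This is exactly Definition~\ref{def-stationary-pt}.

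No step is a real obstacle. The most delicate point is the converse direction of the equivalence, where one must argue that the image of a relatively open set is relatively open in $C$.
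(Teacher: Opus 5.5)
Your proof is correct and follows the same route as the paper. Both use the local-surjectivity homeomorphism $R_{x^\ast}|_{V_{x^\ast}}$ to transfer local minimality between $C$ and $C_{x^\ast}$, and then the identity $d(f\circ R_{x^\ast})|_{\mathbf{0}_{x^\ast}} = df|_{x^\ast}$ to get stationarity; your version simply makes explicit the relative-topology bookkeeping and the use of fiber-wise convexity ($t\mathbf{v}\in C_{x^\ast}$ for $t\in[0,1]$), which the paper leaves implicit.
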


\subsection{Convergence results}
We next state our convergence results for Algorithm \ref{algorithm-rgpa} with the Stepsize Rules \ref{ur-limited-mini}-\ref{ur-constant}.

\begin{proposition}\label{prop-rgpa-convergence}
Assume that $x_0 \in C$ and $\{x_k\}_{k=0}^\infty$ is the sequence generated by Algorithm \ref{algorithm-rgpa},
where the stepsize sequences $\{\alpha_k\}$ and $\{s_k\}$ are chosen
by one of Stepsize Rules \ref{ur-limited-mini}, \ref{ur-Armijo-feasible} or \ref{ur-Armijo-projection-arc}.
Then every limit point of $\{x_k\}$ is a stationary point of $f|_C$.
\end{proposition}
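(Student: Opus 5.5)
The plan is to adapt the Euclidean argument of \cite[Proposition 3.3.1]{Bertsekas-nonlinear-programming}. We pull everything back to tangent spaces through the retraction and use projective continuity (Definition~\ref{def-R-convex}(3)) in place of the continuity of a fixed projection.

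\textbf{Descent inequality.} Write $d(x,s)=\Pi_{C_x}(-s\nabla f(x))$ for $x\in C$. Since $\mathbf{0}_x\in C_x$, Proposition~\ref{prop-convex-projection}(2) with $\mathbf{w}=\mathbf{0}_x$ gives $\langle -s\nabla f(x)-d,\,-d\rangle_x\le 0$. Hence
\[
\langle \nabla f(x), d(x,s)\rangle_x \le -\|d(x,s)\|_x^2/s .
\]

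\textbf{Fixed points are stationary.} If $d(x^\ast,s)=\mathbf{0}_{x^\ast}$, then Proposition~\ref{prop-convex-projection}(2) gives $\langle -s\nabla f(x^\ast),\mathbf{w}\rangle\le 0$ for all $\mathbf{w}\in C_{x^\ast}$. This says $df|_{x^\ast}(\mathbf{w})\ge 0$, so $x^\ast$ is stationary in the sense of Definition~\ref{def-stationary-pt}.

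\textbf{Reduction to a contradiction.} Each rule makes $f(x_k)$ nonincreasing. For the Armijo rules this is by construction. For Rule~\ref{ur-limited-mini}, $\alpha=0$ is admissible, so the minimum is at most $f(x_k)$. Let $x_k\to\bar x$ along a subsequence $K$. By continuity $f(x_k)\to f(\bar x)$, so $f(x_k)-f(x_{k+1})\to 0$. Suppose $\bar x$ is not stationary. Projective continuity gives $d(x_k,s)\to d(\bar x,s)\neq 0$ along $K$. The descent inequality then gives $\limsup_K \langle\nabla f(x_k),d_k\rangle<0$.

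\textbf{Rule~\ref{ur-Armijo-feasible}.} The Armijo inequality gives $\alpha_k\langle\nabla f(x_k),d_k\rangle\to0$, so $\alpha_k\to0$ along $K$. Hence for large $k\in K$ the step $\alpha_k/\beta$ fails the test:
\[
f(x_k)-f\bigl(R_{x_k}((\alpha_k/\beta)d_k)\bigr)< -\sigma(\alpha_k/\beta)\langle\nabla f(x_k),d_k\rangle .
\]
The key step is a uniform first-order expansion of $f\circ R$ near $\mathbf{0}_{\bar x}$. Define $g(x,\mathbf{v})=f(R_x(\mathbf{v}))$, which is $C^1$ on $TM$. By the mean value theorem along $t\mapsto t\mathbf{v}$ in the fiber,
\[
g(x,t\mathbf{v})-g(x,\mathbf{0})=t\,D_{\mathrm{fib}}g(x,\tau\mathbf{v})[\mathbf{v}]
\]
for some $\tau\in[0,t]$. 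The fiber derivative $D_{\mathrm{fib}}g$ is continuous on $TM$. At the zero section it equals $\langle\nabla f(x),\cdot\rangle_x$, because $dR_x|_{\mathbf{0}_x}=\id$. Work in a local trivialization of $TM$ around $\bar x$, where $d_k\to d(\bar x,s)$ and $\tau_k\to0$. Dividing the failed inequality by $\alpha_k/\beta$ and taking limits gives
\[
-\langle\nabla f(\bar x),\bar d\rangle\le-\sigma\langle\nabla f(\bar x),\bar d\rangle,
\]
where $\bar d=d(\bar x,s)$. Thus $(1-\sigma)\langle\nabla f(\bar x),\bar d\rangle\ge0$, contradicting $\langle\nabla f(\bar x),\bar d\rangle<0$.

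\textbf{Rule~\ref{ur-limited-mini}.} This reduces to the previous case. The minimizing $\alpha_k$ does at least as well as the Armijo step $\beta^{m_k}$ computed with the same $s$, so
\[
f(x_k)-f(x_{k+1})\ge-\sigma\beta^{m_k}\langle\nabla f(x_k),d_k\rangle .
\]
The same contradiction argument then shows $\beta^{m_k}\to 0$ is impossible. Hence $f(x_k)-f(x_{k+1})$ stays bounded below by a positive constant along $K$, which contradicts $f(x_k)-f(x_{k+1})\to0$.

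\textbf{Rule~\ref{ur-Armijo-projection-arc}.} Here $s_k\le\bar s$. If $s_k$ is bounded away from $0$ along $K$, pass to a further subsequence with $s_k\to\hat s>0$. Projective continuity gives $d_k\to d(\bar x,\hat s)\ne0$, and the Armijo inequality combined with the descent inequality gives a contradiction. Otherwise $s_k\to0$, and the trial $\hat s_k=s_k/\beta$ fails. The main obstacle is that the failed trial direction $\hat d_k=\Pi_{C_{x_k}}(-\hat s_k\nabla f(x_k))$ tends to $0$, so the ratio must be controlled.

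To handle this, I would use that $\|\Pi_{C_x}(-s\nabla f(x))\|/s$ is nonincreasing in $s$ (the Euclidean lemma \cite[Lemma 2.3.1]{Bertsekas-nonlinear-programming-second-edition} applied in each fiber). This gives
\[
\|\hat d_k\|/\hat s_k\ge\|d(x_k,s)\|/s ,
\]
which is bounded below along $K$ for a fixed $s$. Together with the descent inequality, $-\langle\nabla f(x_k),\hat d_k\rangle\ge\|\hat d_k\|^2/\hat s_k\ge c\|\hat d_k\|$. I would then apply the mean value expansion above, normalized by $\|\hat d_k\|$. The normalized directions $\hat d_k/\|\hat d_k\|$ are bounded, so extract a convergent subsequence. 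The failed inequality then yields $(1-\sigma)\langle\nabla f(\bar x),u\rangle\ge 0$, while the bound gives $\langle\nabla f(\bar x),u\rangle\le-c<0$, a contradiction.

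Two points need care: making the uniform expansion precise with a local trivialization, and justifying the monotonicity lemma fiberwise. The latter holds because each $C_x$ is a closed convex set in an inner product space.
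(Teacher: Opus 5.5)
Your proposal is correct and follows essentially the same route as the paper: $f(x_k)-f(x_{k+1})\to 0$ forces the stepsize to shrink along the subsequence, the failed Armijo trial is expanded by the mean value theorem for $f\circ R_x$ along the fiber to reach $(1-\sigma)\langle \nabla f(\bar x),\cdot\rangle_{\bar x}\ge 0$, and for the projection-arc rule the monotonicity of $\|\Pi_{C_x}(-s\nabla f(x))\|_x/s$ gives the lower bound on $-\langle \nabla f(x_k),\hat d_k\rangle_{x_k}$ relative to $\|\hat d_k\|_{x_k}$. The differences are only bookkeeping: you take $d_k\to\bar d$ directly from projective continuity instead of using the gradient-related lemma with rescaled directions, you handle $s_k\not\to 0$ by passing to $s_k\to\hat s$ rather than comparing with $\bar s$, and you extract a limiting unit direction where the paper applies Cauchy--Schwarz; the one item the paper proves that you take for granted is that the projection-arc Armijo step $m_k$ exists (Lemma~\ref{lemma-existence-armijo-step-along-projection-arc}).
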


To state our convergence result for Stepsize Rule \ref{ur-constant}, we need to define $R$-Lipschitz gradients.

\begin{definition}\label{def-R-lipschitz}
For each $x \in M$ the function $f\circ R_x: T_x M \rightarrow \R$ is a $C^1$ function. Its gradient $\nabla (f\circ R_x)$ is the vector field over $T_x M$ dual to the differential $d(f\circ R_x)$ via the inner product $\left\langle \ast,\ast\right\rangle_x$.
$\nabla f$ is said to be $R$-Lipschitz over $C$ if there is a constant $L>0$ such that
\begin{equation}\label{eq-def-R-Lipschitz}
\|\nabla (f\circ R_x)(\mathbf{v}) - \nabla (f\circ R_x)(\mathbf{0}_x)\|_x \leq L\|\mathbf{v}\|_x
\end{equation}
for every $x \in C$ and $\mathbf{v} \in C_x$.
\end{definition}

\begin{proposition}\label{prop-rgpa-convergence-constant-stepsize}
Assume that $\nabla f$ is $R$-Lipschitz over $C$. Fix an $L>0$ such that Inequality \eqref{eq-def-R-Lipschitz} is true and
a constant $s$ satisfying $0 < s< \frac{2}{L}$. For any $x_0 \in C$, let $\{x_k\}_{k=0}^\infty$ be the sequence generated
by Algorithm \ref{algorithm-rgpa}, where the stepsize sequences $\{\alpha_k\}$ and $\{s_k\}$ are chosen
by Stepsize Rule \ref{ur-constant}. Then every limit point of $\{x_k\}$ is a stationary point of $f|_C$.
\end{proposition}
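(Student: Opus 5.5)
We mimic the Euclidean argument (Bertsekas, Prop.~2.3.2), carried out in the tangent space $T_{x_k}M$ through the pullback $f\circ R_{x_k}$. Write $d_k=\Pi_{C_{x_k}}(-s\nabla f(x_k))$. We have $x_{k+1}=R_{x_k}(d_k)$ and $d_k\in C_{x_k}$.

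\textbf{Step 1: projection inequality.} Apply Proposition~\ref{prop-convex-projection}(2) with $\mathbf{v}=-s\nabla f(x_k)$, $\mathbf{w}^\ast=d_k$ and $\mathbf{w}=\mathbf{0}_{x_k}\in C_{x_k}$, which is admissible by local surjectivity (b). This gives
\begin{equation*}
\left\langle \nabla f(x_k), d_k\right\rangle_{x_k}\le -\tfrac{1}{s}\|d_k\|_{x_k}^2 .
\end{equation*}

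\textbf{Step 2: descent lemma.} Let $g=f\circ R_{x_k}$. Then $\nabla g(\mathbf{0}_{x_k})=\nabla f(x_k)$, because $dR_{x_k}|_{\mathbf{0}}=\id$. The segment $t d_k$, $t\in[0,1]$, lies in $C_{x_k}$ by fiber-wise convexity, since $\mathbf{0}_{x_k},d_k\in C_{x_k}$. So Inequality~\eqref{eq-def-R-Lipschitz} applies along the segment, and the fundamental theorem of calculus on the vector space $T_{x_k}M$ gives
\begin{equation*}
f(x_{k+1})=g(d_k)\le f(x_k)+\left\langle \nabla f(x_k),d_k\right\rangle_{x_k}+\tfrac{L}{2}\|d_k\|_{x_k}^2 .
\end{equation*}
Combining this with Step 1,
\begin{equation*}
f(x_k)-f(x_{k+1})\ge \left(\tfrac1s-\tfrac L2\right)\|d_k\|_{x_k}^2 ,
\end{equation*}
and the coefficient is positive because $s<2/L$.

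\textbf{Step 3: limit.} Let $\tilde x$ be a limit point along a subsequence $K$. Then $\tilde x\in C$ because $C$ is closed. The sequence $f(x_k)$ is nonincreasing, and $f(x_k)\to f(\tilde x)$ along $K$ by continuity, so the whole sequence $f(x_k)$ converges. Hence $f(x_k)-f(x_{k+1})\to0$, and therefore $\|d_k\|_{x_k}\to0$.

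Along $K$ we have $x_k\to\tilde x$ and $-s\nabla f(x_k)\to -s\nabla f(\tilde x)$ in $TM$, since $f$ is $C^1$. Projective continuity then gives $d_k\to \tilde d:=\Pi_{C_{\tilde x}}(-s\nabla f(\tilde x))$. Since $\|d_k\|\to0$ and the norm is continuous on $TM$, we get $\tilde d=\mathbf{0}_{\tilde x}$.

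Now apply Proposition~\ref{prop-convex-projection}(2) at $\tilde x$ with $\mathbf{w}^\ast=\mathbf{0}_{\tilde x}$. For every $\mathbf{v}\in C_{\tilde x}$ this gives $\langle -s\nabla f(\tilde x),\mathbf{v}\rangle\le0$, that is, $df|_{\tilde x}(\mathbf{v})\ge0$. So $\tilde x$ is stationary in the sense of Definition~\ref{def-stationary-pt}.

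\textbf{Main obstacle.} The delicate step is Step 2: the Lipschitz condition \eqref{eq-def-R-Lipschitz} is only assumed at points $\mathbf{v}\in C_x$. The plan handles this by using convexity of $C_{x_k}$ to keep the whole integration segment inside $C_{x_k}$, and by writing
\begin{equation*}
g(d_k)-g(\mathbf{0})-\langle\nabla g(\mathbf{0}),d_k\rangle=\int_0^1\langle\nabla g(td_k)-\nabla g(\mathbf{0}),d_k\rangle\,dt \le \int_0^1 Lt\|d_k\|^2\,dt .
\end{equation*}
The remaining steps only need continuity of $\nabla f$ and of the norm on $TM$, together with projective continuity.
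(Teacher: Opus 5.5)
Your proposal is correct and follows essentially the same route as the paper: the descent inequality obtained by integrating along the segment $t d_k \in C_{x_k}$, combined with the projection inequality tested at $\mathbf{0}_{x_k}$, gives monotone decrease with $\|d_k\|_{x_k} \to 0$, and projective continuity then yields $\Pi_{C_{\tilde x}}(-s\nabla f(\tilde x)) = \mathbf{0}_{\tilde x}$. The only cosmetic difference is that you derive stationarity directly from the variational inequality of Proposition~\ref{prop-convex-projection}(2), whereas the paper cites its lemma characterizing stationary points by vanishing projections, which is proved by that same argument.
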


Propositions \ref{prop-rgpa-convergence} and \ref{prop-rgpa-convergence-constant-stepsize} are proved in Appendix \ref{app-proofs-convergence}
by adapting the corresponding proofs for gradient projection algorithms on convex sets in $\R^n$ given in \cite[Section 3.3]{Bertsekas-nonlinear-programming}.


\section{Application: Weighted Low-Rank Approximation with Retraction-Specific Convex Constraint}\label{sec-wlra-constraint}
We denote the space of $m\times n$ matrices by $\R^{m\times n}$ and the $n\times p$ Stiefel manifold by $\St(n,p)$.
In this section, (1) $m$, $n$ and $p$ are fixed positive integers with the low-rank $p$ satisfying $p \leq \min\{m,n\}$;
(2) $A=[a_{i,j}]$ in $\R^{m\times n}$ is a given matrix of constants, that is, the matrix of data to be approximated;
(3) $W=[w_{i,j}]$ in $\R^{m\times n}$ is a given matrix of weights satisfying $w_{i,j} \geq 0$ for $(i,j) \in \{1,2,\dots,m\}\times \{1,2,\dots,n\}$ and $\sum_{i=1}^m\sum_{j=1}^n w_{i,j} > 0$;
(4) the regularization parameter $\lambda > 0$ is a fixed positive real value.

\subsection{Formulation}
\begin{problem}[A Regularized WLRA]\label{prob-wlra-reg}\cite{xu-yang-wu-2025-wlra-sgd-manifolds}
Define a function $h:\R^{m\times n} \rightarrow \R$ by
\begin{equation}\label{eq-def-wlra-loss}
	h(P) = \sum_{i=1}^m \sum_{j=1}^n w_{i,j}(a_{i,j}-p_{i,j})^2
\end{equation}
for $P=[p_{i,j}] \in \R^{m\times n}$. Solve for
\[
	\mathrm{argmin}\{h(P) + \lambda \|P\|_F^2 ~|~ P\in \R^{m\times n},~  \rank P \leq p\},
\]
where $\|P\|_F$ is the Frobenius norm of $P$.
\end{problem}

The function $h$ given in Equation \eqref{eq-def-wlra-loss} is the WLRA loss, which is widely used in various machine learning tasks \cite{candes-recht-2009-matrix-completion,koren-bell-volinsky-2009-recommender-mf}.
Prior studies applied Frobenius regularizations to the WLRA loss to alleviate ill-posedness and degeneracy
\cite{ban-woodruff-zhang-2019-regularized-wlra, xu-yang-wu-2025-wlra-sgd-manifolds}.

Define the product manifold
\begin{equation} \label{eq-def-prod-mfd}
\M =  \St(m,p)\times \R^p \times \St(n,p).
\end{equation}
The tangent space at $(U, \mathbf{x}, V) \in \M$ is
\begin{equation}\label{eq-tangent-space}
	T_{(U, \mathbf{x}, V)}\M = T_U \St(m,p)\times \R^p \times T_V \St(n,p).
\end{equation}
For tangent vectors $\xi=(\xi_U,\xi_x,\xi_V)$ and $\eta=(\eta_U,\eta_x,\eta_V)$ at $(U,\mathbf{x},V)$, we use the product metric
\[
\langle \xi,\eta\rangle_{(U,\mathbf{x},V)}
=\operatorname{tr}(\xi_U^T\eta_U)+\xi_x^T\eta_x+\operatorname{tr}(\xi_V^T\eta_V).
\]
This combines the induced Frobenius metrics on the Stiefel factors with the Euclidean metric on $\R^p$,
as in our implementation and Appendix \ref{sec-rgp}.
On the product manifold $\M$, define a retraction $R: T\M \rightarrow \M$ by
\begin{equation}\label{eq-def-retraction-prod-mfd}
R = R^{\St(m,p)}\times R^{\R^p} \times R^{\St(n,p)},
\end{equation}
where $R^{\St(m,p)}$ and $R^{\St(n,p)}$ are retractions on Stiefel manifolds, and $R^{\R^p}: \R^p \rightarrow \R^p$ is the standard retraction on $\R^p$ defined by
\begin{equation}\label{eq-def-retraction-euclidean}
R_{\mathbf{x}}^{\R^p}(\mathbf{z}) = \mathbf{x} + \mathbf{z}
\end{equation}
for any $\mathbf{x}, \mathbf{z} \in \R^p$. We reformulate Problem \ref{prob-wlra-reg} to the following unconstrained problem on $\M$.

\begin{problem}[The Regularized WLRA Problem on $\M$]\label{prob-wlra-reg-mfd}
Define a function $H: \M \rightarrow \R$ by
\begin{equation}\label{eq-def-wlra-loss-mfd}
	H(U,\mathbf{x},V) = h(U D_p(\mathbf{x}) V^T)
\end{equation}
for $(U,\mathbf{x},V) \in \M$, where $h$ is given in Equation \eqref{eq-def-wlra-loss} and the function $D_p: \R^p \rightarrow \R^{p\times p}$ is defined as
		\begin{equation}\label{eq-def-d-p}
			D_p(\mathbf{x}) = \begin{bmatrix} x_1 & 0 & \cdots & 0 \\ 0 & x_2 & \cdots & 0 \\ \vdots & \vdots & \ddots & \vdots \\ 0 & 0 & \cdots & x_p \end{bmatrix}
		\end{equation}
		for $\mathbf{x} = [x_1, \ldots, x_p]^T \in \R^p$.
	Solve for
\[
	\mathrm{argmin}\{H(U, \mathbf{x}, V) + \lambda \|\mathbf{x}\|^2 ~|~ (U,\mathbf{x},V) \in \M\},
\]
where $\|\mathbf{x}\|$ is the Euclidean norm of $\mathbf{x}$.
Since $\|\mathbf{x}\| = \|U D_p(\mathbf{x}) V^T\|_F$ for $(U, \mathbf{x}, V) \in \M$,
Problem \ref{prob-wlra-reg-mfd} is equivalent to Problem \ref{prob-wlra-reg}.
\end{problem}

The function $H$ given in Equation \eqref{eq-def-wlra-loss-mfd} is the WLRA loss on $\M$.
Riemannian gradient descent provides a standard method for Problem \ref{prob-wlra-reg-mfd} \cite[Algorithm 4.1]{Boumal-Book}.
See \cite{xu-yang-wu-2025-wlra-sgd-manifolds} for an example.
Alternatively, we can impose a retraction-specific convex constraint on the WLRA loss over $\M$, which also alleviates ill-posedness and degeneracy.
For $\mathbf{x}$ in $\R^p$ and $r > 0$, denote the closed ball centered at $\mathbf{x}$ with radius $r$ by
\[
	\overline{B}_r(\mathbf{x}) = \{\mathbf{z} \in \R^p ~|~ \|\mathbf{z} - \mathbf{x} \| \leq r\},
\]
which is a closed convex subset of $\R^p$.

\begin{problem}[WLRA Problem on $\M$ with Retraction-Specific Convex Constraint]\label{prob-wlra-constrained}
Fix a positive number $r$ and let $H:\M \rightarrow \R$ be as in Equation \eqref{eq-def-wlra-loss-mfd}. Solve for
\[
	\mathrm{argmin}\{H(U,\mathbf{x}, V) ~|~ (U,\mathbf{x},V)  \in C^r \},
\]
where $C^r: = \St(m,p) \times \overline{B}_r(\mathbf{0}) \times \St(n,p) \subset \M$ and $\mathbf{0}$ is the zero vector of $\R^p$.
\end{problem}

\begin{proposition}\label{prop-R-convexity-wlra}
For $C^r$ given in Problem \ref{prob-wlra-constrained} and $(U, \mathbf{x}, V) \in C^r$,
define $C^r_{(U, \mathbf{x}, V)}$ to be the subset of $T_{(U, \mathbf{x}, V)}\M$ given by
\begin{equation}\label{eq-def-r-convex-bundle-wlra}
	C^r_{(U, \mathbf{x}, V)}: = T_U \St(m,p) \times \overline{B}_r(-\mathbf{x}) \times T_V \St(n,p).
\end{equation}
For a retraction $R$ on $\M$ given in Equation \eqref{eq-def-retraction-prod-mfd}, $\{C^r_{(U, \mathbf{x}, V)}\}_{(U, \mathbf{x}, V) \in C^r}$ is an $R$-convexity bundle on $C^r$ and $C^r$ is an $R$-convex set on the manifold $\M$.
\end{proposition}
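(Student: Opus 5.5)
We verify the three conditions of Definition~\ref{def-R-convex} directly, factor by factor. The guiding observation is that $C^r$ and the proposed bundle are both products, and that the Euclidean factor carries the translation retraction \eqref{eq-def-retraction-euclidean}. First, $C^r$ is closed in $\M$, since $\overline{B}_r(\mathbf{0})$ is closed in $\R^p$ and the Stiefel factors are whole manifolds. For fiber-wise convexity, $C^r_{(U,\mathbf{x},V)}$ is a product of two linear subspaces $T_U\St(m,p)$, $T_V\St(n,p)$ and a closed ball. It is therefore closed and convex in $T_{(U,\mathbf{x},V)}\M$.

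For local surjectivity, (a) holds because $R_{(U,\mathbf{x},V)}(\xi_U,\xi_x,\xi_V)=(R^{\St}_U(\xi_U),\mathbf{x}+\xi_x,R^{\St}_V(\xi_V))$. Moreover, $\xi_x\in\overline{B}_r(-\mathbf{x})$ if and only if $\mathbf{x}+\xi_x\in\overline{B}_r(\mathbf{0})$. Condition (b) holds because $\|\mathbf{0}+\mathbf{x}\|\le r$.

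For (c), we use that $dR^{\St}_U|_{\mathbf{0}}=\id$. Since $R^{\St}$ is $C^1$, the inverse function theorem gives an open neighborhood $O_U$ of $\mathbf{0}_U$ in $T_U\St(m,p)$ such that $R^{\St}_U|_{O_U}$ is a diffeomorphism onto an open neighborhood of $U$ in $\St(m,p)$. We obtain $O_V$ similarly. Set $V_{(U,\mathbf{x},V)}=O_U\times\overline{B}_r(-\mathbf{x})\times O_V$, which is open in $C^r_{(U,\mathbf{x},V)}$. Its image is $R^{\St}_U(O_U)\times\overline{B}_r(\mathbf{0})\times R^{\St}_V(O_V)$, which is open in $C^r$. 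The restricted map is a product of homeomorphisms, the middle one being a translation, so it is a homeomorphism.

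The main obstacle is projective continuity, because the fibers vary with the base point and the tangent spaces of the Stiefel factors move. We first compute the projection explicitly. The product metric makes the fiber an orthogonal product, so the projection splits componentwise:
\[
\Pi_{C^r_{(U,\mathbf{x},V)}}(\xi_U,\xi_x,\xi_V)=(\xi_U,\;-\mathbf{x}+\Pi_{\overline{B}_r(\mathbf{0})}(\xi_x+\mathbf{x}),\;\xi_V).
\]
The Stiefel components are left unchanged because the corresponding factor of the fiber is the full tangent space. The middle component comes from translating the ball projection. This formula can be checked via the characterization in Proposition~\ref{prop-convex-projection}(2), applied in each factor.

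Now take sequences $(U_k,\mathbf{x}_k,V_k)\to(\tilde U,\tilde{\mathbf{x}},\tilde V)$ in $C^r$ and tangent vectors $\mathbf{v}_k\to\tilde{\mathbf{v}}$ in $T\M$. We view $T\M$ as a subset of $\R^{m\times p}\times\R^p\times\R^{n\times p}$ squared, with the tangent bundle topology. The Stiefel components of the projections then converge trivially. The $\R^p$ component $-\mathbf{x}_k+\Pi_{\overline{B}_r(\mathbf{0})}(\xi_{x,k}+\mathbf{x}_k)$ converges by continuity of $\Pi_{\overline{B}_r(\mathbf{0})}$, which is Proposition~\ref{prop-convex-projection}(3). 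The limit lies in $C^r_{(\tilde U,\tilde{\mathbf{x}},\tilde V)}$ and equals the projection of $\tilde{\mathbf{v}}$ there, which completes the verification.
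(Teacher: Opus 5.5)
Your proposal is correct and follows essentially the same route as the paper. You check the three conditions factor by factor, use the inverse function theorem on the Stiefel factors together with the full ball $\overline{B}_r(-\mathbf{x})$ for local surjectivity, and get projective continuity from the formula $\Pi_{\overline{B}_r(-\mathbf{x})}(\mathbf{y}) = -\mathbf{x}+\Pi_{\overline{B}_r(\mathbf{0})}(\mathbf{x}+\mathbf{y})$ and Proposition~\ref{prop-convex-projection}(3). You also make explicit two points the paper leaves implicit: that $C^r$ is closed, and why the projection splits componentwise under the product metric.
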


The following lemma gives a connection between Problems \ref{prob-wlra-reg-mfd} and \ref{prob-wlra-constrained}.
\begin{lemma}\label{lemma-relation-bw-reg-constraint}
If $(U^*, \mathbf{x}^*, V^*) \in \M$ is a solution to Problem \ref{prob-wlra-reg-mfd}, then $(U^*, \mathbf{x}^*, V^*) \in C^r$,
where $C^r$ is the retraction-specific convex set given in Problem \ref{prob-wlra-constrained} with
\begin{equation}\label{eq-def-r-lmbda-relation}
	r = \frac{\|\sqrt[\bigodot]{W} \odot A\|_F}{\sqrt{\lambda}},
\end{equation}
where the matrix $\sqrt[\bigodot]{W}$ is defined as the Hadamard (element-wise) root of the matrix $W$, and $\|\sqrt[\bigodot]{W} \odot A\|_F$ is the Frobenius norm
of the Hadamard (element-wise) product of $\sqrt[\bigodot]{W}$ and $A$.
\end{lemma}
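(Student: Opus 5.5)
The plan is to compare the objective value at a minimizer with its value at the trivial feasible point $\mathbf{x}=\mathbf{0}$. Let $F(U,\mathbf{x},V)=H(U,\mathbf{x},V)+\lambda\|\mathbf{x}\|^2$ be the objective of Problem \ref{prob-wlra-reg-mfd}. Since $(U^*,\mathbf{x}^*,V^*)$ is a global minimizer of $F$ on $\M$, we have $F(U^*,\mathbf{x}^*,V^*)\leq F(U^*,\mathbf{0},V^*)$. The competitor $(U^*,\mathbf{0},V^*)$ lies in $\M$ because $\mathbf{0}\in\R^p$, and any Stiefel factors will do. At this point $UD_p(\mathbf{0})V^T$ is the zero matrix, so
\[
F(U^*,\mathbf{0},V^*)=h(0)=\sum_{i=1}^m\sum_{j=1}^n w_{i,j}a_{i,j}^2=\|\sqrt[\bigodot]{W}\odot A\|_F^2 .
\]
Here we use $w_{i,j}\geq 0$, so the Hadamard square root is real.

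Next, $H\geq 0$ because every term $w_{i,j}(a_{i,j}-p_{i,j})^2$ is nonnegative. Hence
\[
\lambda\|\mathbf{x}^*\|^2\leq H(U^*,\mathbf{x}^*,V^*)+\lambda\|\mathbf{x}^*\|^2\leq \|\sqrt[\bigodot]{W}\odot A\|_F^2 .
\]
Dividing by $\lambda>0$ and taking square roots gives $\|\mathbf{x}^*\|\leq r$ with $r$ as in Equation \eqref{eq-def-r-lmbda-relation}. So $\mathbf{x}^*\in\overline{B}_r(\mathbf{0})$. Since $U^*\in\St(m,p)$ and $V^*\in\St(n,p)$ automatically, we conclude $(U^*,\mathbf{x}^*,V^*)\in C^r$.

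There is no real obstacle. The only points to check are that a solution means a global minimizer, that $h(0)$ equals the stated Frobenius norm squared, and that $r>0$ as Problem \ref{prob-wlra-constrained} requires. The last point can fail if $\sqrt[\bigodot]{W}\odot A=0$. In that case the same argument forces $\mathbf{x}^*=\mathbf{0}$, which lies in $C^r$ for every $r>0$, so this degenerate case can be noted separately.
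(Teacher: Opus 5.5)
Your proposal is correct and follows the paper's proof essentially verbatim. You compare the objective at the minimizer with its value at the competitor $(U^*,\mathbf{0},V^*)$, where it equals $\|\sqrt[\bigodot]{W}\odot A\|_F^2$, and then drop the nonnegative term $H$ to bound $\lambda\|\mathbf{x}^*\|^2$; your explicit remarks on $H\geq 0$ and on the degenerate case $r=0$ simply make precise what the paper leaves implicit.
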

Proposition \ref{prop-R-convexity-wlra} and Lemma \ref{lemma-relation-bw-reg-constraint} are proved in Appendix \ref{app-proofs-more}.

\subsection{Numerical Validation}
\label{sec-experiments}
We provide experimental results for Algorithm \ref{algorithm-rgpa}
with the Armijo Stepsize Rules \ref{ur-Armijo-feasible} and \ref{ur-Armijo-projection-arc} to validate our convergence results in Proposition \ref{prop-rgpa-convergence}.
We used the image completion task (modeled as WLRA) as our experimental setting and drew data from the training splits of the MNIST and grayscale CIFAR-10 datasets \cite{lecun-bottou-bengio-haffner-1998-mnist, krizhevsky-2009-cifar10}.
To save computational resources\footnote{To avoid the CPU-GPU synchronization overhead caused by frequent data transfers when implementing the Armijo stepsize rules,
we ran the experiments on a CPU (an Apple Silicon M5 Pro chip with 6 super cores and 12 performance cores).}, we randomly sampled $6000$ images from the MNIST training split and $5000$ images from the CIFAR-10 training split.
The samples were balanced across all image categories in both datasets.
We followed the steps below to simulate the missing data in the image completion task:
\begin{enumerate}
    \item We flattened every sampled image to a column vector and concatenated all the flattened vectors to form a matrix of size $784 \times 6000$ (for MNIST) or $1024 \times 5000$ (for CIFAR-10).
    We used this matrix as the data matrix $A$.
    \item We randomly selected a fraction of the entries in the matrix $A$ and set them to zero. We refer to this fraction as the ``masking rate''.
    We considered an entry $a_{i,j}$ of $A$ ``masked'' if we selected it for this operation\footnote{We used 42 as the random seed when sampling the MNIST and CIFAR-10 datasets and the masked entries.}.
    \item We defined the binary weight matrix $W$ such that $w_{i,j} = 0$ if the entry $a_{i,j}$ of $A$ was masked and $w_{i,j} = 1$ otherwise.
\end{enumerate}

As the main routines in our experiments,
we modeled the image completion task as Problem \ref{prob-wlra-constrained},
and applied Algorithm \ref{algorithm-rgpa}
with the Armijo Stepsize Rules \ref{ur-Armijo-feasible} and \ref{ur-Armijo-projection-arc}.
We refer to the main routines as ``Feasible Direction'' and ``Projection Arc'', respectively.
For the benchmark routine, we modeled the image completion task as Problem \ref{prob-wlra-reg-mfd}
and applied the Riemannian gradient descent algorithm with the Armijo stepsize rule.
We refer to the benchmark routine as ``Regularized Armijo''.
The precedure for applying Algorithm \ref{algorithm-rgpa} to Problem \ref{prob-wlra-constrained} appears in Appendix \ref{sec-rgp}.
The procedure for applying the Riemannian gradient descent to Problem \ref{prob-wlra-reg-mfd} appears in \cite{xu-yang-wu-2025-wlra-sgd-manifolds}.

In our experiments, we chose the following hyperparameters: (1) the low-rank $p \in \{32, 64, 128\}$;
(2) the regularization parameter $\lambda \in \{10^{-2}, 10^{-4}, 10^{-6}\}$ for the benchmark routine (for both main routines, we followed Equation \eqref{eq-def-r-lmbda-relation} to determine the value of $r$ corresponding to each $\lambda$);
(3) two projection-like retractions: the QR-decomposition-based retraction and the polar-decomposition-based retraction\footnote{The mathematical details about these retractions appear in Appendix \ref{sec-rgp}.}.
We refer to each combination of a low-rank $p$, a regularization parameter $\lambda$, and a retraction as a ``configuration''.
Our choices of the hyperparameters above yielded $18$ configurations.
We initialized our experiments with the truncated singular value decomposition of the matrix $A \odot W$ to make the initialization feasible in both main routines.
We used the root mean square error (RMSE) as our primary evaluation metric and set the masking rate to be $0.5$.

\enlargethispage{\baselineskip}
\textbf{Grid Search Stage}: We first ran a grid search on the Armijo-rule parameters $(\beta,\sigma,s)$
at 27 Armijo-rule parameter grid points for each routine within each configuration, with $200$ iterations for each run\footnote{\interlinepenalty=10000 For the ``Projection Arc'' routine,
the Armijo-rule parameters searched in this stage were $(\beta, \sigma, \bar{s})$ given in Rule \ref{ur-Armijo-projection-arc}.
For simplicity of notation, we denote the initial trial step $\bar{s}$ in Rule \ref{ur-Armijo-projection-arc} as $s$ in this section.
Further details of the grid search design appear in Appendix \ref{sec-grid-search-design}.}.
Table \ref{tab:mnist-cifar-armijo-grid} summarizes the grid search results.
For each routine and configuration, ``mean Armijo evaluations'' averages inequality checks per iteration over each successful run's $200$ iterations,
then across successful grid runs; failed runs are excluded and counted separately.
This is not a run total or wall-clock cost; Appendix \ref{sec-armijo-evaluation-metric} defines the counting convention.

\textbf{Routine Comparison Stage}: Next, we compared the three routines over longer runs.
Within each configuration, we ran each routine for $1000$ iterations
with the optimal Armijo-rule parameters selected from the grid search.
Figure \ref{fig:mnist-cifar-armijo-rmse-curves} presents the RMSE convergence curves for all the configurations.

Combining our experimental results from both stages, we observe that our main routines
could outperform the benchmark routine over both short and long horizons
when we use the QR-decomposition-based retraction and the low-rank $p = 128$ on the sampled MNIST dataset.

We would like to emphasize that the main aim of this paper is theoretical: to establish a framework for retraction-based convex optimization on Riemannian manifolds and convergence results within the framework.
We do not aim to provide a comprehensive study of the weighted low-rank approximation.
Hence, our numerical results are not meant to provide a comprehensive comparison with other Riemannian optimization algorithms on the image completion task (for example, those in \cite{boumal-absil-2011-rtrmc, mishra-sepulchre-2014-r3mc}).
Our choice of the benchmark routine is motivated by its theoretical connection with our main routines (Lemma \ref{lemma-relation-bw-reg-constraint}).

To further validate our convergence results, we repeated the two-stage experiments on both sampled datasets with masking rates of $0.25$ and $0.75$,
and report Stage 1 grid search results on the full MNIST and grayscale CIFAR-10 training splits with a masking rate of $0.5$.
These full-training experiments use all $60000$ MNIST images and all $50000$ CIFAR-10 images;
Tables \ref{tab:full-train-armijo-grid} and \ref{tab:full-train-cifar10-armijo-grid} summarize their results.
Appendix \ref{sec-numerical-supp} presents these supplementary results.
This appendix also includes a paired, one-sided $t$-test on the grid search metrics to assess the aggregate differences between the three routines.


\section{Conclusion and Final Remarks}
In this paper, we establish a framework for retraction-based convex optimization on Riemannian manifolds,
which includes a notion of retraction-specific convex sets and retraction-based gradient projection algorithms.
We provide convergence results for the algorithms within this framework.
We applied our algorithms to study the weighted low-rank approximation and numerically validate our convergence results.

\textbf{Limitations.} Wen and Yin \cite{wen-yin-2013} introduced the Cayley transform as an iterative scheme for preserving the orthogonality of iterates on the Stiefel manifold.
Recent studies used the Cayley transform to design optimization algorithms on the Stiefel manifold for training large-scale neural networks \cite{jun-li-sinisa-2020, kume-yamada-2024}.
However, the Cayley transform is not a retraction and therefore falls outside the scope of our framework.
The convergence analysis of stochastic gradient projection methods for constrained non-convex programming problems on Euclidean spaces
is an emerging topic in the recent deep learning literature \cite{bianchi-jakubowicz-2013-multi-agent-psg, ghadimi-lan-zhang-2016-rspg, davis-drusvyatskiy-2019-stochastic-model-based, alacaoglu-lyu-2023-dependent-psg, zheng-lamperski-2026-psgd-goldstein}.
However, the algorithms in this paper are deterministic.
Extending our framework to Cayley transform and stochastic gradient projection algorithms on manifolds could be interesting directions for future research.

\textbf{Code and Data Release.} The code and processed reference results for this project are hosted in the GitHub repository \url{https://github.com/ConglongXu-GWU/manifold-grad-proj-release}.

\begin{table*}[t]
\centering
\small
\setlength{\tabcolsep}{0.6mm}
\begin{tabular}{@{}cc|rrrr|rrrr|rrrr@{}}
\hline
\multicolumn{14}{c}{\textbf{Sampled MNIST}} \\
\hline
\multicolumn{14}{c}{\textbf{QR retraction}} \\
\hline
rank & $\lambda$ & \multicolumn{4}{c|}{\shortstack{Regularized Armijo\\(Benchmark)}} & \multicolumn{4}{c|}{\shortstack{Feasible Direction\\(Main Routine)}} & \multicolumn{4}{c}{\shortstack{Projection Arc\\(Main Routine)}} \\
 & & best & mean & evals & fail. & best & mean & evals & fail. & best & mean & evals & fail. \\
\hline
32 & $10^{-2}$ & 0.2138 $(0.9,0.75,0.5)$ & 0.2153 & 50.3 & 0 & 0.2134 $(0.9,0.75,0.5)$ & 0.2152 & 50.3 & 0 & \textbf{0.2133 $(0.9,0.75,0.3)$} & 0.2152 & 50.3 & 0 \\
32 & $10^{-4}$ & 0.2137 $(0.9,0.75,0.3)$ & 0.2153 & 50.3 & 0 & 0.2134 $(0.9,0.75,0.5)$ & 0.2152 & 50.3 & 0 & \textbf{0.2133 $(0.9,0.75,0.3)$} & 0.2152 & 50.3 & 0 \\
32 & $10^{-6}$ & 0.2140 $(0.9,0.75,0.5)$ & 0.2153 & 50.2 & 0 & 0.2134 $(0.9,0.75,0.5)$ & 0.2152 & 50.3 & 0 & \textbf{0.2133 $(0.9,0.75,0.3)$} & 0.2152 & 50.3 & 0 \\
64 & $10^{-2}$ & \textbf{0.2225 $(0.9,0.75,0.5)$} & 0.2296 & 49.9 & 0 & 0.2236 $(0.9,0.75,0.5)$ & 0.2297 & 49.9 & 0 & 0.2228 $(0.9,0.75,0.5)$ & 0.2297 & 49.9 & 0 \\
64 & $10^{-4}$ & \textbf{0.2228 $(0.9,0.75,0.5)$} & 0.2296 & 49.9 & 0 & 0.2236 $(0.9,0.75,0.5)$ & 0.2297 & 49.9 & 0 & 0.2228 $(0.9,0.75,0.5)$ & 0.2297 & 49.9 & 0 \\
64 & $10^{-6}$ & 0.2232 $(0.9,0.75,0.5)$ & 0.2296 & 49.9 & 0 & 0.2236 $(0.9,0.75,0.5)$ & 0.2297 & 49.9 & 0 & \textbf{0.2228 $(0.9,0.75,0.5)$} & 0.2297 & 49.9 & 0 \\
128 & $10^{-2}$ & \textbf{0.2735 $(0.9,0.75,1)$} & 0.2791 & 49.1 & 0 & 0.2736 $(0.9,0.75,1)$ & 0.2790 & 49.1 & 0 & 0.2738 $(0.9,0.75,1)$ & 0.2790 & 49.1 & 0 \\
128 & $10^{-4}$ & \textbf{0.2734 $(0.9,0.75,1)$} & 0.2790 & 49.1 & 0 & 0.2736 $(0.9,0.75,1)$ & 0.2790 & 49.1 & 0 & 0.2738 $(0.9,0.75,1)$ & 0.2790 & 49.1 & 0 \\
128 & $10^{-6}$ & 0.2737 $(0.9,0.75,0.5)$ & 0.2790 & 49.1 & 0 & \textbf{0.2736 $(0.9,0.75,1)$} & 0.2790 & 49.1 & 0 & 0.2738 $(0.9,0.75,1)$ & 0.2790 & 49.1 & 0 \\
\hline
\multicolumn{14}{c}{\textbf{POLAR retraction}} \\
\hline
rank & $\lambda$ & \multicolumn{4}{c|}{\shortstack{Regularized Armijo\\(Benchmark)}} & \multicolumn{4}{c|}{\shortstack{Feasible Direction\\(Main Routine)}} & \multicolumn{4}{c}{\shortstack{Projection Arc\\(Main Routine)}} \\
 & & best & mean & evals & fail. & best & mean & evals & fail. & best & mean & evals & fail. \\
\hline
32 & $10^{-2}$ & 0.2149 $(0.9,0.75,0.5)$ & 0.2159 & 52.9 & 2 & \textbf{0.2145 $(0.9,0.5,0.3)$} & 0.2160 & 59.5 & 0 & 0.2147 $(0.9,0.5,1)$ & 0.2159 & 62.1 & 0 \\
32 & $10^{-4}$ & 0.2145 $(0.9,0.5,1)$ & 0.2158 & 50.4 & 3 & \textbf{0.2145 $(0.9,0.5,0.3)$} & 0.2160 & 59.5 & 0 & 0.2147 $(0.9,0.5,1)$ & 0.2159 & 62.1 & 0 \\
32 & $10^{-6}$ & 0.2150 $(0.5,0.5,0.3)$ & 0.2160 & 55.6 & 2 & \textbf{0.2145 $(0.9,0.5,0.3)$} & 0.2160 & 59.5 & 0 & 0.2147 $(0.9,0.5,1)$ & 0.2159 & 62.1 & 0 \\
64 & $10^{-2}$ & 0.2280 $(0.75,0.75,0.5)$ & 0.2317 & 57.7 & 0 & 0.2280 $(0.9,0.75,0.5)$ & 0.2320 & 58.9 & 0 & \textbf{0.2274 $(0.9,0.75,0.5)$} & 0.2315 & 58.0 & 0 \\
64 & $10^{-4}$ & 0.2276 $(0.9,0.5,0.5)$ & 0.2315 & 55.8 & 0 & 0.2280 $(0.9,0.75,0.5)$ & 0.2320 & 58.9 & 0 & \textbf{0.2274 $(0.9,0.75,0.5)$} & 0.2315 & 58.0 & 0 \\
64 & $10^{-6}$ & 0.2281 $(0.75,0.5,0.3)$ & 0.2329 & 61.3 & 0 & 0.2280 $(0.9,0.75,0.5)$ & 0.2320 & 58.9 & 0 & \textbf{0.2274 $(0.9,0.75,0.5)$} & 0.2315 & 58.0 & 0 \\
128 & $10^{-2}$ & 0.2772 $(0.75,0.75,0.3)$ & 0.2805 & 57.9 & 0 & \textbf{0.2743 $(0.9,0.75,1)$} & 0.2806 & 56.8 & 0 & 0.2748 $(0.9,0.75,1)$ & 0.2803 & 56.1 & 0 \\
128 & $10^{-4}$ & 0.2774 $(0.9,0.75,1)$ & 0.2806 & 57.8 & 0 & \textbf{0.2743 $(0.9,0.75,1)$} & 0.2806 & 56.8 & 0 & 0.2748 $(0.9,0.75,1)$ & 0.2803 & 56.1 & 0 \\
128 & $10^{-6}$ & 0.2775 $(0.9,0.75,0.5)$ & 0.2806 & 58.1 & 0 & \textbf{0.2743 $(0.9,0.75,1)$} & 0.2806 & 56.8 & 0 & 0.2748 $(0.9,0.75,1)$ & 0.2803 & 56.1 & 0 \\
\hline
\multicolumn{14}{c}{\textbf{Sampled CIFAR-10}} \\
\hline
\multicolumn{14}{c}{\textbf{QR retraction}} \\
\hline
rank & $\lambda$ & \multicolumn{4}{c|}{\shortstack{Regularized Armijo\\(Benchmark)}} & \multicolumn{4}{c|}{\shortstack{Feasible Direction\\(Main Routine)}} & \multicolumn{4}{c}{\shortstack{Projection Arc\\(Main Routine)}} \\
 & & best & mean & evals & fail. & best & mean & evals & fail. & best & mean & evals & fail. \\
\hline
32 & $10^{-2}$ & \textbf{0.2903 $(0.9,0.75,0.5)$} & 0.2937 & 58.3 & 0 & 0.2908 $(0.9,0.75,1)$ & 0.2938 & 58.3 & 0 & 0.2905 $(0.9,0.75,0.5)$ & 0.2938 & 58.4 & 0 \\
32 & $10^{-4}$ & \textbf{0.2895 $(0.9,0.75,1)$} & 0.2936 & 58.3 & 0 & 0.2908 $(0.9,0.75,1)$ & 0.2938 & 58.3 & 0 & 0.2905 $(0.9,0.75,0.5)$ & 0.2938 & 58.4 & 0 \\
32 & $10^{-6}$ & \textbf{0.2898 $(0.75,0.5,0.5)$} & 0.2938 & 58.4 & 0 & 0.2908 $(0.9,0.75,1)$ & 0.2938 & 58.3 & 0 & 0.2905 $(0.9,0.75,0.5)$ & 0.2938 & 58.4 & 0 \\
64 & $10^{-2}$ & 0.3089 $(0.9,0.75,1)$ & 0.3143 & 58.3 & 0 & \textbf{0.3085 $(0.9,0.75,0.5)$} & 0.3143 & 58.2 & 0 & 0.3096 $(0.9,0.75,0.5)$ & 0.3143 & 58.2 & 0 \\
64 & $10^{-4}$ & 0.3093 $(0.9,0.75,0.5)$ & 0.3145 & 58.3 & 0 & \textbf{0.3085 $(0.9,0.75,0.5)$} & 0.3143 & 58.2 & 0 & 0.3096 $(0.9,0.75,0.5)$ & 0.3143 & 58.2 & 0 \\
64 & $10^{-6}$ & 0.3103 $(0.9,0.75,0.5)$ & 0.3144 & 58.3 & 0 & \textbf{0.3085 $(0.9,0.75,0.5)$} & 0.3143 & 58.2 & 0 & 0.3096 $(0.9,0.75,0.5)$ & 0.3143 & 58.2 & 0 \\
128 & $10^{-2}$ & 0.3508 $(0.9,0.75,0.3)$ & 0.3528 & 57.9 & 0 & \textbf{0.3498 $(0.9,0.75,1)$} & 0.3528 & 57.9 & 0 & 0.3499 $(0.9,0.75,1)$ & 0.3528 & 57.9 & 0 \\
128 & $10^{-4}$ & \textbf{0.3496 $(0.9,0.75,1)$} & 0.3528 & 57.9 & 0 & 0.3498 $(0.9,0.75,1)$ & 0.3528 & 57.9 & 0 & 0.3499 $(0.9,0.75,1)$ & 0.3528 & 57.9 & 0 \\
128 & $10^{-6}$ & 0.3511 $(0.9,0.75,1)$ & 0.3529 & 57.9 & 0 & \textbf{0.3498 $(0.9,0.75,1)$} & 0.3528 & 57.9 & 0 & 0.3499 $(0.9,0.75,1)$ & 0.3528 & 57.9 & 0 \\
\hline
\multicolumn{14}{c}{\textbf{POLAR retraction}} \\
\hline
rank & $\lambda$ & \multicolumn{4}{c|}{\shortstack{Regularized Armijo\\(Benchmark)}} & \multicolumn{4}{c|}{\shortstack{Feasible Direction\\(Main Routine)}} & \multicolumn{4}{c}{\shortstack{Projection Arc\\(Main Routine)}} \\
 & & best & mean & evals & fail. & best & mean & evals & fail. & best & mean & evals & fail. \\
\hline
32 & $10^{-2}$ & 0.2937 $(0.75,0.5,0.3)$ & 0.2964 & 56.4 & 6 & \textbf{0.2925 $(0.75,0.5,0.3)$} & 0.2961 & 57.0 & 6 & 0.2942 $(0.75,0.5,0.5)$ & 0.2965 & 63.7 & 6 \\
32 & $10^{-4}$ & 0.2934 $(0.75,0.5,1)$ & 0.2958 & 54.6 & 6 & \textbf{0.2925 $(0.75,0.5,0.3)$} & 0.2961 & 57.0 & 6 & 0.2942 $(0.75,0.5,0.5)$ & 0.2965 & 63.7 & 6 \\
32 & $10^{-6}$ & \textbf{0.2925 $(0.75,0.5,0.3)$} & 0.2960 & 56.0 & 6 & 0.2925 $(0.75,0.5,0.3)$ & 0.2961 & 57.0 & 6 & 0.2942 $(0.75,0.5,0.5)$ & 0.2965 & 63.7 & 6 \\
64 & $10^{-2}$ & 0.3140 $(0.5,0.5,0.5)$ & 0.3166 & 53.7 & 6 & \textbf{0.3130 $(0.75,0.5,1)$} & 0.3162 & 50.5 & 6 & 0.3130 $(0.75,0.5,1)$ & 0.3165 & 58.5 & 6 \\
64 & $10^{-4}$ & 0.3134 $(0.75,0.5,1)$ & 0.3167 & 53.0 & 6 & \textbf{0.3130 $(0.75,0.5,1)$} & 0.3162 & 50.5 & 6 & 0.3130 $(0.75,0.5,1)$ & 0.3165 & 58.5 & 6 \\
64 & $10^{-6}$ & 0.3141 $(0.75,0.5,0.3)$ & 0.3164 & 52.9 & 6 & \textbf{0.3130 $(0.75,0.5,1)$} & 0.3162 & 50.5 & 6 & 0.3130 $(0.75,0.5,1)$ & 0.3165 & 58.5 & 6 \\
128 & $10^{-2}$ & 0.3524 $(0.5,0.25,0.5)$ & 0.3541 & 51.5 & 6 & \textbf{0.3523 $(0.5,0.5,0.5)$} & 0.3540 & 50.3 & 6 & 0.3523 $(0.5,0.5,0.5)$ & 0.3542 & 60.1 & 6 \\
128 & $10^{-4}$ & \textbf{0.3520 $(0.5,0.5,0.5)$} & 0.3540 & 51.5 & 6 & 0.3523 $(0.5,0.5,0.5)$ & 0.3540 & 50.3 & 6 & 0.3523 $(0.5,0.5,0.5)$ & 0.3542 & 60.1 & 6 \\
128 & $10^{-6}$ & \textbf{0.3522 $(0.75,0.5,0.5)$} & 0.3539 & 49.1 & 6 & 0.3523 $(0.5,0.5,0.5)$ & 0.3540 & 50.3 & 6 & 0.3523 $(0.5,0.5,0.5)$ & 0.3542 & 60.1 & 6 \\
\hline
\end{tabular}

\caption{For each retraction-rank-regularization configuration,
the table reports each routine's best and mean final RMSE, mean Armijo inequality checks per iteration (``mean Armijo Evaluations''), and failure count.
Evaluation counts are averaged over each successful run's $200$ iterations, then across successful grid runs.
Failures count grid runs that reached the $200$-backtrack limit.
Failed records are excluded from evaluating the other three metrics.
Parentheses give the Armijo parameters $(\beta,\sigma,s)$ for the best RMSE entry.
Boldface marks the lowest best RMSE among the three routines in each row. The masking rate is set to $0.5$.
}
\label{tab:mnist-cifar-armijo-grid}
\end{table*}

\FloatBarrier

\begin{figure*}[t]
\centering
\includegraphics[width=\textwidth]{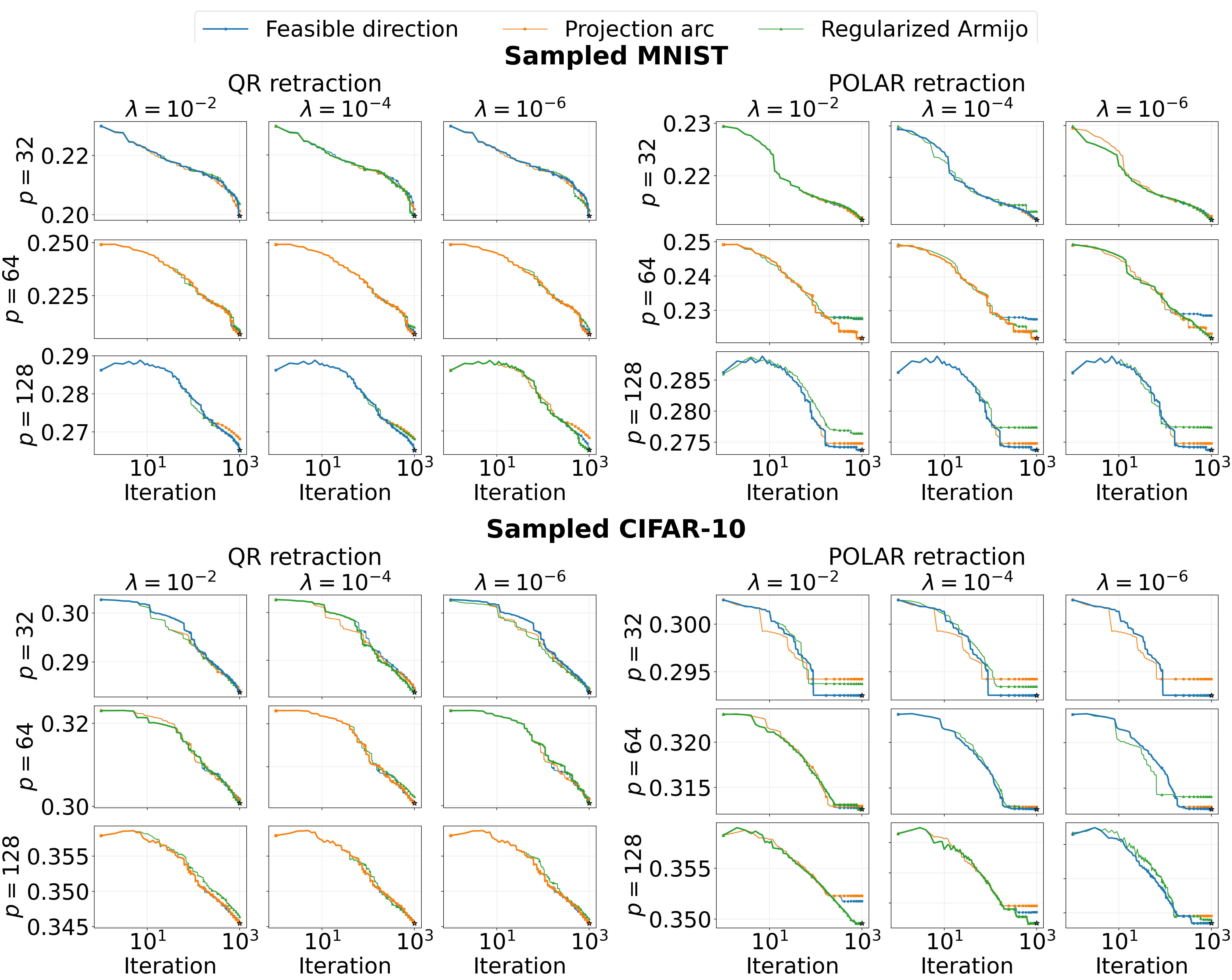}
\caption{RMSE convergence over $1000$ iterations using the optimal Armijo-rule parameters selected from the grid search summarized in Table \ref{tab:mnist-cifar-armijo-grid}.
The left panels use the QR decomposition-based retraction and the right panels use the polar decomposition-based retraction.
The $x$-axis is logarithmic. The thicker curve with a star at its final point marks the lowest final RMSE in that subplot. The masking rate is set to $0.5$.
On the sampled MNIST dataset (the upper panel),
(1) the ``Feasible Direction'' routine appears to consistently outperform the benchmark routine when using the polar-decomposition-based retraction and the low-rank $p = 128$,
(2 )the ``Projection Arc'' routine appears to consistently outperform the benchmark routine when using the QR-decomposition-based retraction and the low-rank $p = 64$.
On the sampled CIFAR-10 dataset (the lower pannel), (1) the ``Feasible Direction'' routine appears to consistently outperform the benchmark routine when using the polar-decomposition-based retraction and the low-rank $p = 32$,
(2) the ``Projection Arc'' routine appears to consistently outperform the Regularized Armijo (benchmark) routine
when using the QR-decomposition-based retraction and the low-rank $p = 128$.}
\label{fig:mnist-cifar-armijo-rmse-curves}
\end{figure*}

\FloatBarrier

\clearpage
\bibliography{references}

\clearpage
\appendix

\section{Proofs of Results in Section \ref{sec-rgpa}}\label{app-proofs-convergence}
Recall that we assume in Section \ref{sec-rgpa} that
\begin{enumerate}
	\item $M$ is a Riemannian manifold;
	\item for every $x \in M$, $\left\langle \ast, \ast\right\rangle_x$ is the Riemannian inner product on $T_x M$, and $\|\ast\|_x$ is the norm on $T_x M$ induced by $\left\langle  \ast, \ast \right\rangle_x$;
	\item $R$ is a retraction on $M$;
	\item $f:M\rightarrow \R$ is a $C^1$ function;
	\item $C$ is an $R$-convex subset of $M$ equipped with an $R$-convexity bundle $\{C_x\}_{x \in C}$.
\end{enumerate}

Note that, $\nabla f$ and $\nabla (f \circ R_x)$ are the gradients of $f$ and $f \circ R_x$, which are the duals of $df$ and $d(f \circ R_x)$ via the Riemannian inner product.
Since $(dR_x)|_{\mathbf{0}_x} = \id_{T_x M}$, we have that, for any $x \in M$ and $\mathbf{v}\in T_x M$
\begin{equation*}
\left\langle \nabla (f\circ R_x) (\mathbf{0}_x), \mathbf{v} \right\rangle_x
= d(f\circ R_x)|_{\mathbf{0}_x} (\mathbf{v})
= (df)|_{x}\circ (dR_x)|_{\mathbf{0}_x} (\mathbf{v})
= df|_{x}(\mathbf{v}) =\left\langle \nabla f(x) ,\mathbf{v}\right\rangle_x.
\end{equation*}
Thus,
\begin{equation}\label{eq-gradient-coincide}
\nabla (f\circ R_x) (\mathbf{0}_x) = \nabla f(x).
\end{equation}

We first present the proof for Lemma \ref{lemma-local-min-stationary}.
\begin{proof}[Proof of Lemma \ref{lemma-local-min-stationary}]
By the local surjectivity of $\{C_x\}_{x \in C}$, there is an open neighborhood $V_{x^\ast}$ of $\mathbf{0}_{x^\ast}$ in $C_{x^\ast}$
such that $R_{x^\ast}|_{V_{x^\ast}}: V_{x^\ast} \rightarrow R_{x^\ast} (V_{x^\ast})$
is a homeomorphism and that $R_{x^\ast}(V_{x^\ast})$ is an open neighborhood of $x^\ast$ in $C$.
Thus, $x^\ast\in C$ is a local minimum of $f|_C$ if and only if $\mathbf{0}_{x^\ast}$
is a local minimum of $(f \circ R_{x^\ast})|_{V_{x^\ast}}$,
that is, a local minimum of $(f \circ R_{x^\ast})|_{C_{x^\ast}}$.
If $\mathbf{0}_{x^\ast}$ is a local minimum of $(f \circ R_{x^\ast})|_{C_{x^\ast}}$,
then $d(f \circ R_{x^\ast})|_{\mathbf{0}_{x^\ast}} (\mathbf{v}) \geq 0$ for all $\mathbf{v} \in C_{x^\ast}$.
But $d(f \circ R_{x^\ast})|_{\mathbf{0}_{x^\ast}} =df|_{x^\ast} \circ dR_{x^\ast}|_{\mathbf{0}_{x^\ast}} =  df|_{x^\ast}$
since $dR_{x^\ast}|_{\mathbf{0}_{x^\ast}} = \id_{T_{x^\ast} M}$. This shows that $x^\ast$ is  a stationary point of $f|_C$.
\end{proof}

In the following subsections, we present proofs for Propositions \ref{prop-rgpa-convergence} and \ref{prop-rgpa-convergence-constant-stepsize}, which closely resemble the proofs of the convergence results of gradient project algorithms in $\R^n$ given in for example \cite[Chapter 3]{Bertsekas-nonlinear-programming}. We will first prove Proposition \ref{prop-rgpa-convergence} for cases where the stepsizes are given by the limited minimization rule and the Armijo rule along the feasible direction. Then, we prove Proposition \ref{prop-rgpa-convergence-constant-stepsize}. After that, we prove Proposition \ref{prop-rgpa-convergence} for the case where the stepsizes are given by the Armijo rule along the projection arc.

\subsection{The limited minimization rule and the Armijo rule along the feasible direction}

\begin{lemma}\label{lemma-stationary-projection}
For a point $x^{\ast} \in C$, the following statements are equivalent:
\begin{enumerate}
	\item $x^\ast$ is a stationary point of $f|_{C}$.
	\item $\Pi_{C_{x^\ast}}(-s\nabla f (x^\ast)) = \mathbf{0}_{x^\ast}$ for some $s>0$.
	\item $\Pi_{C_{x^\ast}}(-s\nabla f (x^\ast)) = \mathbf{0}_{x^\ast}$ for all $s>0$.
\end{enumerate}
\end{lemma}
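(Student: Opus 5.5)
The plan is to reduce everything to the variational characterization of the projection in Proposition \ref{prop-convex-projection}(2), applied in the inner product space $(T_{x^\ast}M,\langle\ast,\ast\rangle_{x^\ast})$ with the closed convex set $C_{x^\ast}$. Fiber-wise convexity guarantees that $C_{x^\ast}$ is closed and convex, and local surjectivity (b) gives $\mathbf{0}_{x^\ast}\in C_{x^\ast}$. So $\mathbf{0}_{x^\ast}$ is an admissible candidate $\mathbf{w}^\ast$ in Proposition \ref{prop-convex-projection}(2). For any $s>0$ we get
\[
\Pi_{C_{x^\ast}}(-s\nabla f(x^\ast))=\mathbf{0}_{x^\ast}
\iff
\langle -s\nabla f(x^\ast)-\mathbf{0}_{x^\ast},\mathbf{w}-\mathbf{0}_{x^\ast}\rangle_{x^\ast}\le 0 \text{ for all } \mathbf{w}\in C_{x^\ast}.
\]
Dividing by $s>0$, the right-hand side is equivalent to $\langle\nabla f(x^\ast),\mathbf{w}\rangle_{x^\ast}\ge 0$ for all $\mathbf{w}\in C_{x^\ast}$.

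Next I would use the definition of the gradient as the dual of $df$, so that $\langle\nabla f(x^\ast),\mathbf{w}\rangle_{x^\ast}=df|_{x^\ast}(\mathbf{w})$. The condition above is then exactly Definition \ref{def-stationary-pt}. This shows that, for each fixed $s>0$, statement (2) at that $s$ is equivalent to statement (1). The resulting condition does not depend on $s$.

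The equivalences then follow quickly. (1) implies (3) because the per-$s$ equivalence holds for every $s>0$. (3) implies (2) trivially. (2) implies (1) by applying the per-$s$ equivalence at the particular $s$ given in (2).

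I expect no real obstacle. The only points needing care are verifying that $\mathbf{0}_{x^\ast}\in C_{x^\ast}$, so that Proposition \ref{prop-convex-projection}(2) applies with $\mathbf{w}^\ast=\mathbf{0}_{x^\ast}$, and noting that multiplying by the positive scalar $s$ preserves the direction of the inequality.
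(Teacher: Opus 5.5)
Your proposal is correct and follows essentially the same route as the paper: both apply the variational characterization in Proposition~\ref{prop-convex-projection}(2) with $\mathbf{w}^\ast=\mathbf{0}_{x^\ast}$ to turn $\Pi_{C_{x^\ast}}(-s\nabla f(x^\ast))=\mathbf{0}_{x^\ast}$ into $df|_{x^\ast}(\mathbf{v})\ge 0$ for all $\mathbf{v}\in C_{x^\ast}$. The only difference is that you package the argument as a single per-$s$ equivalence, whereas the paper proves (2)$\Rightarrow$(1) and (1)$\Rightarrow$(3) separately; you also state explicitly that $\mathbf{0}_{x^\ast}\in C_{x^\ast}$ is needed, which the paper leaves implicit.
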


\begin{proof}
Clearly, (3) $\Rightarrow$ (2). To see that (2) $\Rightarrow$ (1), one just needs to realize that, by Proposition \ref{prop-convex-projection}(2), if $\Pi_{C_{x^\ast}}(-s\nabla f (x^\ast)) = \mathbf{0}_{x^\ast}$ for some $s>0$, then
\[
df_{x^\ast}(\mathbf{v}) = \left\langle \nabla f(x^\ast) , \mathbf{v}\right\rangle_{x^\ast} = -\frac{1}{s}\left\langle -s\nabla f(x^\ast) - \Pi_{C_{x^\ast}}(-s\nabla f (x^\ast)), \mathbf{v} - \Pi_{C_{x^\ast}}(-s\nabla f (x^\ast)) \right\rangle_{x^\ast} \geq 0.
\]
for all $\mathbf{v} \in C_{x^\ast}$. Thus, $x^\ast$ is a stationary point of $f|_{C}$. To see that (1) $\Rightarrow$ (3), note that, if $x^\ast$ is a stationary point of $f|_{C}$, then
\[
\left\langle -s\nabla f(x^\ast) - \mathbf{0}_{x^\ast}, \mathbf{v} - \mathbf{0}_{x^\ast} \right\rangle_{x^\ast} =-s df(\mathbf{v}) \leq 0
\]
for all $s >0$ and all $\mathbf{v} \in C_{x^\ast}$. By Proposition \ref{prop-convex-projection}(2) again, this means that $\Pi_{C_{x^\ast}}(-s\nabla f (x^\ast)) = \mathbf{0}_{x^\ast}$ for all $s>0$.
\end{proof}

\begin{definition}\label{def-gradient-related}
For sequences $\{x_k\}_{k=0}^\infty \subset C$ and $\{\mathbf{v}_k \in C_{x_k}\}_{k=0}^\infty$, $\{\mathbf{v}_k\}$ is said to be gradient related to $\{x_k\}$ if and only if
\begin{equation}\label{eq-def-gradient-related-inequality-1}
\left\langle \nabla f(x_{k}), \mathbf{v}_{k} \right\rangle_{x_{k}} \leq 0
\end{equation}
for every $k\geq 0$ and, for every subsequence $\{x_{k_l}\}_{l=0}^\infty$ converging to a nonstationary point of $f|_C$, the corresponding subsequence $\{\mathbf{v}_{k_l}\}_{l=0}^\infty$ satisfies that $\{\|\mathbf{v}_{k_l}\|_{x_{k_l}}\}_{l=0}^\infty$ is bounded and that
\begin{equation}\label{eq-def-gradient-related-inequality-2}
\limsup_{l \rightarrow \infty} \left\langle \nabla f(x_{k_l}), \mathbf{v}_{k_l} \right\rangle_{x_{k_l}} <0.
\end{equation}
\end{definition}

\begin{lemma}\label{lemma-fix-s-gradient-related}
The sequence $\{\Pi_{C_{x_k}}(-s\nabla f (x_k))\}$ is gradient related to $\{x_k\}$ for any sequence $\{x_k\}_{k=0}^\infty \subset C$ and any $s>0$.
\end{lemma}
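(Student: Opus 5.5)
Write $\mathbf{v}_k = \Pi_{C_{x_k}}(-s\nabla f(x_k))$. The plan is to verify the two requirements of Definition \ref{def-gradient-related} in turn: first the sign condition \eqref{eq-def-gradient-related-inequality-1}, then boundedness and the strict $\limsup$ condition \eqref{eq-def-gradient-related-inequality-2} along subsequences converging to nonstationary points. The first uses only the variational inequality of Proposition \ref{prop-convex-projection}(2). The second combines projective continuity with Lemma \ref{lemma-stationary-projection}.

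\textbf{Sign condition.} Since $\mathbf{0}_{x_k}\in C_{x_k}$ by local surjectivity (b), apply Proposition \ref{prop-convex-projection}(2) in $T_{x_k}M$ with $\mathbf{v}=-s\nabla f(x_k)$, $\mathbf{w}^\ast=\mathbf{v}_k$ and $\mathbf{w}=\mathbf{0}_{x_k}$. This gives $\langle -s\nabla f(x_k)-\mathbf{v}_k, -\mathbf{v}_k\rangle_{x_k}\le 0$, that is,
\[
s\langle \nabla f(x_k),\mathbf{v}_k\rangle_{x_k} \le -\|\mathbf{v}_k\|_{x_k}^2 .
\]
Hence $\langle \nabla f(x_k),\mathbf{v}_k\rangle_{x_k}\le -\frac{1}{s}\|\mathbf{v}_k\|_{x_k}^2\le 0$, which is \eqref{eq-def-gradient-related-inequality-1}. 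This sharper bound will also be used below.

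\textbf{Limit conditions.} Let $x_{k_l}\to\tilde x$ with $\tilde x$ nonstationary; note $\tilde x\in C$ because $C$ is closed. Since $f$ is $C^1$, $\nabla f$ is a continuous vector field, so $-s\nabla f(x_{k_l})\to -s\nabla f(\tilde x)$ in $TM$. Projective continuity (Definition \ref{def-R-convex}(3)) then gives $\mathbf{v}_{k_l}\to\tilde{\mathbf{v}}:=\Pi_{C_{\tilde x}}(-s\nabla f(\tilde x))$ in $TM$. Because the Riemannian norm is continuous on $TM$, $\|\mathbf{v}_{k_l}\|_{x_{k_l}}\to\|\tilde{\mathbf{v}}\|_{\tilde x}$, so the norms are bounded. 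Since $\tilde x$ is nonstationary, Lemma \ref{lemma-stationary-projection} gives $\tilde{\mathbf{v}}\neq\mathbf{0}_{\tilde x}$. Taking limits in the bound above yields
\[
\limsup_{l\to\infty}\langle \nabla f(x_{k_l}),\mathbf{v}_{k_l}\rangle_{x_{k_l}}\le -\frac{1}{s}\|\tilde{\mathbf{v}}\|_{\tilde x}^2<0,
\]
which is \eqref{eq-def-gradient-related-inequality-2}. Alternatively, $\langle\nabla f,\mathbf{v}\rangle$ is a continuous function on the Whitney sum $TM\oplus TM$, so the inner product itself converges.

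\textbf{Main obstacle.} The only delicate point is making the convergence statements precise for vectors living in different tangent spaces. Convergence is understood in the topology of $TM$, exactly as in projective continuity, and the metric is continuous on $TM\oplus TM$. With that convention, everything reduces to the Euclidean argument.
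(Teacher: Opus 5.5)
Your proposal is correct and follows essentially the same route as the paper. The variational inequality with $\mathbf{w}=\mathbf{0}_{x_k}$ gives $\langle \nabla f(x_k),\mathbf{v}_k\rangle_{x_k}\le -\frac{1}{s}\|\mathbf{v}_k\|_{x_k}^2$, and projective continuity together with Lemma \ref{lemma-stationary-projection} then yields boundedness and the strict $\limsup$ condition, exactly as in the paper; your remarks about convergence in $TM$ and continuity of the metric only make explicit what the paper uses implicitly.
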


\begin{proof}
Recall that $\mathbf{0}_{x} \in C_x$ for $x \in C$. By Proposition \ref{prop-convex-projection}(2), we have that
\[
\left\langle -s\nabla f (x_k) - \Pi_{C_{x_k}}(-s\nabla f (x_k)), \mathbf{0}_{x_k} - \Pi_{C_{x_k}}(-s\nabla f (x_k)) \right\rangle_{x_k} \leq 0.
\]
That is,
\begin{equation}\label{eq-fix-s-gradient-related-bound}
\left\langle \nabla f (x_k) , \Pi_{C_{x_k}}(-s\nabla f (x_k)) \right\rangle_{x_k} \leq -\frac{1}{s}  \|\Pi_{C_{x_k}}(-s\nabla f (x_k))\|_{x_k}^2 \leq 0.
\end{equation}
This shows that $\{\Pi_{C_{x_k}}(-s\nabla f (x_k))\}$ satisfies Inequality \eqref{eq-def-gradient-related-inequality-1}.

Now assume that a subsequence $\{x_{k_l}\}_{l=0}^\infty$ is converging to a nonstationary point $\tilde{x}$ of $f|_C$. Since $f$ is $C^1$, we have that $\lim_{l\rightarrow \infty} \nabla f (x_{k_l}) = \nabla f (\tilde{x})$. By the projective continuity of the collection $\{C_x\}_{x \in C}$, we have that $\lim_{l\rightarrow \infty} \Pi_{C_{x_{k_l}}}(-s\nabla f (x_{k_l})) = \Pi_{C_{\tilde{x}}}(-s\nabla f (\tilde{x}))$. Thus, $\{\|\Pi_{C_{x_{k_l}}}(-s\nabla f (x_{k_l}))\|_{x_{k_l}}\}$ is bounded. By Inequality \eqref{eq-fix-s-gradient-related-bound}, we have
\[
\left\langle \nabla f (x_{k_l}), \Pi_{C_{x_{k_l}}}(-s\nabla f (x_{k_l})) \right\rangle_{x_{k_l}} \leq -\frac{1}{s} \|\Pi_{C_{x_{k_l}}}(-s\nabla f (x_{k_l}))\|_{x_{k_l}}^2.
\]
Letting $l \rightarrow \infty$, we get that
\[
\limsup_{l \rightarrow \infty} \left\langle \nabla f(x_{k_l}), \Pi_{C_{x_{k_l}}}(-s\nabla f (x_{k_l})) \right\rangle_{x_{k_l}} \leq -\frac{1}{s} \|\Pi_{C_{\tilde{x}}}(-s\nabla f (\tilde{x}))\|_{\tilde{x}}^2.
\]
Since $\tilde{x}$ is a nonstationary point of $f|_C$, we know that $\Pi_{C_{\tilde{x}}}(-s\nabla f (\tilde{x})) \neq \mathbf{0}_{\tilde{x}}$ by Lemma \ref{lemma-stationary-projection}. This shows that $\limsup_{l \rightarrow \infty} \left\langle \nabla f(x_{k_l}), \Pi_{C_{x_{k_l}}}(-s\nabla f (x_{k_l})) \right\rangle_{x_{k_l}} < 0$. So $\{\Pi_{C_{x_k}}(-s\nabla f (x_k))\}$ satisfies Inequality \eqref{eq-def-gradient-related-inequality-2}.
\end{proof}

\begin{lemma}\label{lemma-convergence-gradient-related-Armijo}
Fix $\beta, \sigma \in (0,1)$. For sequences $\{x_k\}_{k=0}^\infty \subset C$ and $\{\mathbf{v}_k \in C_{x_k}\}_{k=0}^\infty$, assume that $\{\mathbf{v}_k\}$ is gradient related to $\{x_k\}$ and that
\begin{equation}\label{eq-gradient-related-update}
x_{k+1} = R_{x_k}(\alpha_k \mathbf{v}_k),
\end{equation}
where
$\alpha_k = \beta^{m_k}$, where $m_k$ is the smallest non-negative integer $m$ satisfying
\begin{equation}\label{eq-Armijo-feasible-inequality-gradient-related}
f(x_k) - f(R_{x_k}(\beta^m \mathbf{v}_k)) \geq -\sigma \beta^m \left\langle \nabla f(x_k), \mathbf{v}_k\right\rangle_{x_k}.
\end{equation}
Then every limit point of $\{x_k\}$ is a stationary point of $f|_C$.
\end{lemma}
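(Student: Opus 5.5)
We adapt the classical argument of Bertsekas for gradient related directions with the Armijo rule. We argue by contradiction: suppose a subsequence $\{x_{k_l}\}$ converges to a nonstationary point $\tilde{x}\in C$. Note that $C$ is closed, so $\tilde{x}\in C$.

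The first step is to show that $f(x_k)-f(x_{k+1})\to 0$. By Inequality \eqref{eq-Armijo-feasible-inequality-gradient-related} and Inequality \eqref{eq-def-gradient-related-inequality-1}, the sequence $\{f(x_k)\}$ is monotonically nonincreasing. Since $f$ is continuous and $x_{k_l}\to\tilde{x}$, we have $f(x_{k_l})\to f(\tilde{x})$. Monotonicity then gives $f(x_k)\to f(\tilde{x})$, and hence $f(x_k)-f(x_{k+1})\to 0$. Combining this with the Armijo inequality yields
\[
-\alpha_{k_l}\langle \nabla f(x_{k_l}),\mathbf{v}_{k_l}\rangle_{x_{k_l}}\to 0 .
\]
By gradient relatedness, $\limsup_l \langle \nabla f(x_{k_l}),\mathbf{v}_{k_l}\rangle<0$. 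Therefore $\alpha_{k_l}\to 0$. In particular, for large $l$ we have $m_{k_l}\ge 1$, so the trial step $\bar\alpha_l:=\alpha_{k_l}/\beta$ fails the Armijo test:
\[
f(x_{k_l})-f(R_{x_{k_l}}(\bar\alpha_l\mathbf{v}_{k_l})) < -\sigma\bar\alpha_l\langle\nabla f(x_{k_l}),\mathbf{v}_{k_l}\rangle_{x_{k_l}},
\]
with $\bar\alpha_l\to 0$.

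The second step is to pass to a limit. Since $\{\|\mathbf{v}_{k_l}\|\}$ is bounded, after passing to a further subsequence we may assume $\mathbf{v}_{k_l}\to\tilde{\mathbf{v}}\in T_{\tilde{x}}M$ in $TM$. Bounded sequences in $TM$ over a convergent base sequence have convergent subsequences, which we can check in a local trivialization. Write $\mathbf{p}_l=\mathbf{v}_{k_l}$. Apply the mean value theorem to the $C^1$ function $t\mapsto f(R_{x_{k_l}}(t\mathbf{p}_l))$ on $[0,\bar\alpha_l]$. This gives some $\tilde\alpha_l\in[0,\bar\alpha_l]$ with
\[
f(x_{k_l})-f(R_{x_{k_l}}(\bar\alpha_l\mathbf{p}_l))=-\bar\alpha_l\, df|_{R_{x_{k_l}}(\tilde\alpha_l\mathbf{p}_l)}\bigl(dR_{x_{k_l}}|_{\tilde\alpha_l\mathbf{p}_l}(\mathbf{p}_l)\bigr).
\]
Dividing the failed Armijo inequality by $\bar\alpha_l>0$ gives
\[
df|_{R_{x_{k_l}}(\tilde\alpha_l\mathbf{p}_l)}\bigl(dR_{x_{k_l}}|_{\tilde\alpha_l\mathbf{p}_l}(\mathbf{p}_l)\bigr) > \sigma\langle\nabla f(x_{k_l}),\mathbf{p}_l\rangle_{x_{k_l}}.
\]

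The main technical step is taking $l\to\infty$ on the left-hand side. Here we use that $R:TM\to M$ is $C^1$ as a map on the whole tangent bundle. Hence the point $(x_{k_l},\tilde\alpha_l\mathbf{p}_l)\to(\tilde{x},\mathbf{0}_{\tilde{x}})$ in $TM$. It follows that $R_{x_{k_l}}(\tilde\alpha_l\mathbf{p}_l)\to R_{\tilde{x}}(\mathbf{0}_{\tilde{x}})=\tilde{x}$. Also, the fiber derivative $dR_{x_{k_l}}|_{\tilde\alpha_l\mathbf{p}_l}(\mathbf{p}_l)\to dR_{\tilde x}|_{\mathbf{0}_{\tilde x}}(\tilde{\mathbf{v}})=\tilde{\mathbf{v}}$, by continuity of $dR$ and Definition \ref{def-retraction}(2). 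Because $df$ is continuous on $TM$, the left side tends to $df|_{\tilde x}(\tilde{\mathbf{v}})=\langle\nabla f(\tilde x),\tilde{\mathbf{v}}\rangle_{\tilde x}$. The right side tends to $\sigma\langle\nabla f(\tilde x),\tilde{\mathbf{v}}\rangle_{\tilde x}$. We therefore obtain $(1-\sigma)\langle\nabla f(\tilde x),\tilde{\mathbf{v}}\rangle_{\tilde x}\ge 0$, that is, $\langle\nabla f(\tilde x),\tilde{\mathbf{v}}\rangle\ge0$.

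On the other hand, gradient relatedness gives
\[
\langle\nabla f(\tilde x),\tilde{\mathbf{v}}\rangle=\lim_l\langle\nabla f(x_{k_l}),\mathbf{p}_l\rangle\le\limsup<0,
\]
which is a contradiction. The delicate point is justifying the joint continuity of $(x,\mathbf{w},\mathbf{u})\mapsto dR_x|_{\mathbf{w}}(\mathbf{u})$. We handle this by working in a chart around $\tilde x$, where $R$ is a $C^1$ map of the local coordinates $(x,\mathbf{w})$ and the fiber derivative is a partial Jacobian, which is continuous.
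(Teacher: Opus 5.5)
Your proof is correct and follows the paper's argument. Both argue by contradiction from a subsequence converging to a nonstationary point, show that $\alpha_{k_l}\to 0$ so the Armijo test fails at $\alpha_{k_l}/\beta$, apply the mean value theorem along $t\mapsto f(R_{x_{k_l}}(t\mathbf{v}_{k_l}))$, and pass to the limit to get $(1-\sigma)\langle\nabla f(\tilde x),\tilde{\mathbf{v}}\rangle_{\tilde x}\ge 0$, which contradicts gradient relatedness. The differences are minor improvements. You extract a convergent subsequence of $\mathbf{v}_{k_l}$ directly instead of rescaling to $\mathbf{u}_{k_l}=r\mathbf{v}_{k_l}/\|\mathbf{v}_{k_l}\|_{x_{k_l}}$, and you skip the paper's projective-continuity check that the limit lies in $C_{\tilde x}$, which the contradiction never uses. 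You also state explicitly the joint continuity of $(x,\mathbf{w},\mathbf{u})\mapsto dR_x|_{\mathbf{w}}(\mathbf{u})$, which the paper uses tacitly when it lets $j\to\infty$.
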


\begin{proof}
Assume on the contrary that there is a subsequence  $\{x_{k_l}\}_{l=0}^\infty$ of $\{x_k\}$ converging to a nonstationary point $\bar{x}$ of $f|_C$. By Inequalities \eqref{eq-def-gradient-related-inequality-1}, \eqref{eq-Armijo-feasible-inequality-gradient-related} and Equation \eqref{eq-gradient-related-update}, $\{f(x_k)\}$ is a decreasing sequence. So $\lim_{k\rightarrow \infty} f(x_k) =f(\bar{x})$ and, consequently,  $\lim_{k\rightarrow \infty} (f(x_k) - f(x_{k+1})) =0$. Using Inequalities \eqref{eq-def-gradient-related-inequality-1} and \eqref{eq-Armijo-feasible-inequality-gradient-related} again, one gets that
\[
0 \geq \limsup_{l \rightarrow \infty} \alpha_{k_l}\left\langle \nabla f(x_{k_l}), \mathbf{v}_{k_l}\right\rangle_{x_{k_l}}\geq \liminf_{l \rightarrow \infty} \alpha_{k_l}\left\langle \nabla f(x_{k_l}), \mathbf{v}_{k_l}\right\rangle_{x_{k_l}} \geq \frac{1}{\sigma}\lim_{l \rightarrow \infty}(f(x_{k_l +1}) - f(x_{k_l})) =0.
\]
Thus, $\lim_{l \rightarrow \infty} \alpha_{k_l}\left\langle \nabla f(x_{k_l}), \mathbf{v}_{k_l}\right\rangle_{x_{k_l}}=0$. By Inequality \eqref{eq-def-gradient-related-inequality-2}, we have that $\limsup_{l \rightarrow \infty} \left\langle \nabla f(x_{k_l}), \mathbf{v}_{k_l} \right\rangle_{x_{k_l}} <0$. This implies that $\lim_{l \rightarrow \infty} \alpha_{k_l} =0$. Therefore, there is some $m>0$ such that, for $l \geq m$,
\begin{itemize}
	\item $\left\langle \nabla f(x_{k_l}), \mathbf{v}_{k_l} \right\rangle_{x_{k_l}}< \frac{1}{2}\limsup_{l \rightarrow \infty} \left\langle \nabla f(x_{k_l}), \mathbf{v}_{k_l} \right\rangle_{x_{k_l}}<0$,
	\item $m_{k_l} \geq 1$.
\end{itemize}
It follows that
\begin{itemize}
	\item there is an $r >0$ such that $\|\mathbf{v}_{k_l}\|_{x_{k_l}} \geq r$ for $l \geq m$,
	\item $m = m_{k_l} - 1$ does not satisfy Inequality \eqref{eq-Armijo-feasible-inequality-gradient-related} if $l \geq m$.
\end{itemize}
Thus,
\begin{equation}\label{eq-Armijo-feasible-inequality-gradient-related-not}
f(x_{k_l}) - f(R_{x_{k_l}}(\frac{\alpha_{k_l}}{\beta} \mathbf{v}_{k_l})) < -\sigma \frac{\alpha_{k_l}}{\beta}  \left\langle \nabla f(x_{k_l}), \mathbf{v}_{k_l}\right\rangle_{x_{k_l}} \text{ for } l \geq m.
\end{equation}
Set $\mathbf{u}_{k_l} = \frac{r\mathbf{v}_{k_l}}{\|\mathbf{v}_{k_l}\|_{x_{k_l}}} = (1-\frac{r}{\|\mathbf{v}_{k_l}\|_{x_{k_l}}}) \mathbf{0}_{x_{k_l}} + \frac{r}{\|\mathbf{v}_{k_l}\|_{x_{k_l}}} \mathbf{v}_{k_l} \in C_{x_{k_l}}$ and $\bar{\alpha}_{k_l} = \frac{\alpha_{{k_l}}\|\mathbf{v}_{k_l}\|_{x_{k_l}}}{r\beta}>0$ for $l \geq m$. Note that $\{\|\mathbf{v}_{k_l}\|_{x_{k_l}}\}$ is bounded since $\{\mathbf{v}_k\}$ is gradient related to $\{x_k\}$. Hence, $\lim_{l \rightarrow \infty} \bar{\alpha}_{k_l} =0$. Since $\|\mathbf{u}_{k_l}\|_{x_{k_l}} =r \leq \|\mathbf{v}_{k_l}\|_{x_{k_l}}$ for $l\geq m$, we have that the sequence $\{\mathbf{u}_{k_l}\}$ contains a convergent subsequence. That is, there is a subsequence $\{\mathbf{u}_{k_{l_j}}\}_{j=0}^\infty$ of $\{\mathbf{u}_{k_l}\}$ and a $\bar{\mathbf{u}} \in T_{\bar{x}} M$ such that $\lim_{j \rightarrow \infty} \mathbf{u}_{k_{l_j}} = \bar{\mathbf{u}}$. Note that $\|\bar{\mathbf{u}}\|_{\bar{x}} =r$. Moreover, by the projective continuity of $\{C_x\}$, $\Pi_{C_{\bar{x}}}(\bar{\mathbf{u}}) = \lim_{j \rightarrow \infty} \Pi_{C_{x_{k_{l_j}}}}(\mathbf{u}_{k_{l_j}}) = \lim_{j \rightarrow \infty} \mathbf{u}_{k_{l_j}} = \bar{\mathbf{u}}$. So $\bar{\mathbf{u}} \in C_{\bar{x}}$. Inequality \eqref{eq-Armijo-feasible-inequality-gradient-related-not} implies that
\[
\frac{f(x_{k_{l_j}}) - f(R_{x_{k_{l_j}}}(\bar{\alpha}_{k_{l_j}} \mathbf{u}_{k_{l_j}}))}{\bar{\alpha}_{k_{l_j}} } < -\sigma \left\langle \nabla f(x_{k_{l_j}}), \mathbf{u}_{k_{l_j}}\right\rangle_{x_{k_{l_j}}} \text{ for } l_j \geq m.
\]
By the Mean Value Theorem, there is an $\tilde{\alpha}_{k_{l_j}}\in [0,\bar{\alpha}_{k_{l_j}}]$ such that
\[
 -\left\langle \nabla (f\circ R_{x_{k_{l_j}}})(\tilde{\alpha}_{k_{l_j}} \mathbf{u}_{k_{l_j}}), \mathbf{u}_{k_{l_j}}\right\rangle_{x_{k_{l_j}}} < -\sigma \left\langle \nabla f(x_{k_{l_j}}), \mathbf{u}_{k_{l_j}}\right\rangle_{x_{k_{l_j}}} \text{ for } l_j \geq m.
\]
But $\lim_{j \rightarrow \infty} \bar{\alpha}_{k_{l_j}} =0$. So $\lim_{j \rightarrow \infty} \tilde{\alpha}_{k_{l_j}} =0$. Letting $j\rightarrow \infty$ in the above inequality and using Equation \eqref{eq-gradient-coincide}, we get that $ -\left\langle \nabla f(\bar{x}), \bar{\mathbf{u}}\right\rangle_{\bar{x}} \leq -\sigma \left\langle \nabla f(\bar{x}), \bar{\mathbf{u}}\right\rangle_{\bar{x}}$. That is, $(1-\sigma) \left\langle \nabla f(\bar{x}), \bar{\mathbf{u}}\right\rangle_{\bar{x}} \geq 0$. This shows that  $\left\langle \nabla f(\bar{x}), \bar{\mathbf{u}}\right\rangle_{\bar{x}} \geq 0$. On the other hand,
\begin{eqnarray*}
\left\langle \nabla f(\bar{x}), \bar{\mathbf{u}}\right\rangle_{\bar{x}} & = & \lim_{j \rightarrow \infty} \left\langle \nabla f(x_{k_{l_j}}), \mathbf{u}_{k_{l_j}}\right\rangle_{x_{k_{l_j}}} = \lim_{j \rightarrow \infty} \frac{r}{\|\mathbf{v}_{k_{l_j}}\|_{x_{k_{l_j}}}} \left\langle \nabla f(x_{k_{l_j}}), \mathbf{v}_{k_{l_j}}\right\rangle_{x_{k_{l_j}}} \\
& \leq &  r \cdot \frac{\limsup_{j \rightarrow \infty}\left\langle \nabla f(x_{k_{l_j}}), \mathbf{v}_{k_{l_j}}\right\rangle_{x_{k_{l_j}}}}{\limsup_{j \rightarrow \infty}\|\mathbf{v}_{k_{l_j}}\|_{x_{k_{l_j}}}} <0.
\end{eqnarray*}
This is a contradiction.
\end{proof}

\begin{lemma}\label{lemma-convergence-gradient-related-limited-mini}
For sequences $\{x_k\}_{k=0}^\infty \subset C$ and $\{\mathbf{v}_k \in C_{x_k}\}_{k=0}^\infty$, assume that $\{\mathbf{v}_k\}$ is gradient related to $\{x_k\}$ and that
\begin{equation}\label{eq-gradient-related-update-limited-mini}
x_{k+1} = R_{x_k}(\tilde{\alpha}_k \mathbf{v}_k),
\end{equation}
where $\tilde{\alpha}_k \in [0,1]$ satisfies $f(R_{x_k}(\tilde{\alpha}_k \mathbf{v}_k)) = \min_{\alpha \in [0,1]} f(R_{x_k}(\alpha \mathbf{v}_k))$
Then every limit point of $\{x_k\}$ is a stationary point of $f|_C$.
\end{lemma}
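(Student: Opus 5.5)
The plan is to reduce Lemma \ref{lemma-convergence-gradient-related-limited-mini} to Lemma \ref{lemma-convergence-gradient-related-Armijo} by comparison. The limited minimization step decreases $f$ at least as much as any Armijo step along the same direction. Fix arbitrary $\beta,\sigma\in(0,1)$ and let $\alpha_k=\beta^{m_k}$ be the Armijo stepsize from Inequality \eqref{eq-Armijo-feasible-inequality-gradient-related} for the same pair $(x_k,\mathbf{v}_k)$. We cannot apply that lemma verbatim, since the iterates here are $R_{x_k}(\tilde\alpha_k\mathbf{v}_k)$ rather than $R_{x_k}(\alpha_k\mathbf{v}_k)$. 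Instead I would rerun its proof, replacing the actual decrease by the inequality
\[
f(x_k)-f(x_{k+1})\;\geq\; f(x_k)-f(R_{x_k}(\alpha_k\mathbf{v}_k))\;\geq\;-\sigma\alpha_k\left\langle \nabla f(x_k),\mathbf{v}_k\right\rangle_{x_k}\;\geq 0 .
\]
The first inequality holds because $\alpha_k\in(0,1]$ and $\tilde\alpha_k$ minimizes over $[0,1]$. The second is the Armijo condition. The last uses Inequality \eqref{eq-def-gradient-related-inequality-1}.

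First I must check that $m_k$ exists, i.e.\ the Armijo inequality holds for some $m$. If $\langle\nabla f(x_k),\mathbf{v}_k\rangle<0$, this follows from the mean value theorem and Equation \eqref{eq-gradient-coincide}: $f(R_{x_k}(t\mathbf{v}_k))=f(x_k)+t\langle\nabla f(x_k),\mathbf{v}_k\rangle+o(t)$. If the inner product is $0$, then $m=0$ works trivially, because the minimization guarantees $f(x_{k+1})\le f(x_k)$; alternatively I would set $\alpha_k=1$ there and the displayed inequality still holds.

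With the displayed inequality in hand, the argument follows Lemma \ref{lemma-convergence-gradient-related-Armijo}. Suppose a subsequence $x_{k_l}\to\bar x$ with $\bar x$ nonstationary. Then $\{f(x_k)\}$ is monotone decreasing, and by continuity $f(x_k)\to f(\bar x)$, so $f(x_k)-f(x_{k+1})\to 0$. Hence $\alpha_{k_l}\langle\nabla f(x_{k_l}),\mathbf{v}_{k_l}\rangle\to 0$. Inequality \eqref{eq-def-gradient-related-inequality-2} then forces $\alpha_{k_l}\to0$.

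The rest of that proof never uses the definition of $x_{k+1}$. It uses only the failure of the Armijo test at $m_{k_l}-1$ for the pair $(x_{k_l},\mathbf{v}_{k_l})$, the rescaling $\mathbf{u}_{k_l}$, boundedness from gradient-relatedness, projective continuity to get $\bar{\mathbf{u}}\in C_{\bar x}$, and the mean value theorem. So it carries over verbatim and yields the same contradiction: $\langle\nabla f(\bar x),\bar{\mathbf{u}}\rangle\ge0$ and $<0$ simultaneously.

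The main obstacle is being careful that every step of the earlier proof depends only on the comparison inequality above and on properties of $(x_k,\mathbf{v}_k)$, not on $x_{k+1}=R_{x_k}(\alpha_k\mathbf{v}_k)$ itself. To make this clean, I would phrase the write-up as a remark that Lemma \ref{lemma-convergence-gradient-related-Armijo} remains valid when Equation \eqref{eq-gradient-related-update} is weakened to $f(x_{k+1})\le f(R_{x_k}(\alpha_k\mathbf{v}_k))$.
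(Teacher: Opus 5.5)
Your proposal is correct and follows the paper's route. You fix $\beta,\sigma\in(0,1)$, bound the exact-minimization decrease from below by the Armijo decrease $-\sigma\alpha_k\langle\nabla f(x_k),\mathbf{v}_k\rangle_{x_k}$ along the same direction, and rerun the proof of Lemma~\ref{lemma-convergence-gradient-related-Armijo}, whose remaining steps never use how $x_{k+1}$ is defined.

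Your existence check for $m_k$ is a worthwhile addition that the paper skips, but your justification is off when $\langle\nabla f(x_k),\mathbf{v}_k\rangle_{x_k}=0$. The test at $m=0$ concerns $f(R_{x_k}(\mathbf{v}_k))$, not $f(x_{k+1})$, and it can fail for every $m$; for the same reason, the middle link of your displayed chain can fail with $\alpha_k=1$. This is harmless. At such indices you only need $f(x_k)-f(x_{k+1})\geq 0$, which the minimization gives. Along any subsequence converging to a nonstationary point, the inner products are eventually strictly negative, so the Armijo step exists wherever it is actually used.
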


\begin{proof}
Assume on the contrary that there is a subsequence  $\{x_{k_l}\}_{l=0}^\infty$ of $\{x_k\}$ converging to a nonstationary point $\bar{x}$ of $f|_C$. Then $\{f(x_{k+1})\}$ is a decreasing sequence converging to $f(\bar{x})$. Now fix $\beta, \sigma \in (0,1)$ and let $\alpha_k = \beta^{m_k}$, where $m_k$ is the smallest non-negative integer $m$ satisfying Inequality \eqref{eq-Armijo-feasible-inequality-gradient-related}. Observe that
\[
f(x_k)-f(x_{k+1}) \geq f(x_k) - f(R_{x_k}(\alpha_k  \mathbf{v}_k)) \geq -\sigma \alpha_k \left\langle \nabla f(x_k), \mathbf{v}_k\right\rangle_{x_k}.
\]
From here on, one simply repeats exactly the same arguments in the proof of Lemma \ref{lemma-convergence-gradient-related-Armijo} for the sequences $\{x_k\}$, $\{\mathbf{v}_k\}$ and $\{\alpha_k\}$. This leads to the same contradiction that there is a vector $\bar{\mathbf{u}} \in C_{\bar{x}}$ such that $\left\langle \nabla f(\bar{x}), \bar{\mathbf{u}}\right\rangle_{\bar{x}} <0 \leq \left\langle \nabla f(\bar{x}), \bar{\mathbf{u}}\right\rangle_{\bar{x}}$.
\end{proof}

\begin{proof}[Proof for Proposition \ref{prop-rgpa-convergence} for Stepsize Rules \ref{ur-limited-mini} and \ref{ur-Armijo-feasible}]
Proposition \ref{prop-rgpa-convergence} for these two stepsize rules follows from Lemmas \ref{lemma-fix-s-gradient-related}, \ref{lemma-convergence-gradient-related-Armijo} and \ref{lemma-convergence-gradient-related-limited-mini}.
\end{proof}

\subsection{The constant stepsize rule}

\begin{lemma}\label{lemma-R-Lipschitz-quadratic-bound}
Assume that $\nabla f$ is $R$-Lipschitz over $C$, that is, $f$ satisfies Inequality \eqref{eq-def-R-Lipschitz}. Then
\begin{equation}\label{eq-R-Lipschitz-quadratic-bound}
f(R_x(\mathbf{v})) \leq f(x) + \left\langle \nabla f(x), \mathbf{v} \right\rangle_x + \frac{L}{2}\|\mathbf{v}\|_x^2
\end{equation}
for every $x \in C$ and $\mathbf{v} \in C_x$.
\end{lemma}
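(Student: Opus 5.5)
The plan is to reduce the statement to the one-dimensional fundamental theorem of calculus along the straight segment $t\mapsto t\mathbf{v}$ in the vector space $T_x M$. This is exactly the classical descent lemma argument, transplanted to the pullback $f\circ R_x$.

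Fix $x\in C$ and $\mathbf{v}\in C_x$. Define $g(t)=f(R_x(t\mathbf{v}))$ for $t\in[0,1]$. Since $f$ and $R$ are $C^1$, $f\circ R_x$ is $C^1$ on $T_x M$, so $g$ is $C^1$ with $g'(t)=\langle \nabla(f\circ R_x)(t\mathbf{v}),\mathbf{v}\rangle_x$. Hence
\[
f(R_x(\mathbf{v}))-f(x)=g(1)-g(0)=\int_0^1 \langle \nabla(f\circ R_x)(t\mathbf{v}),\mathbf{v}\rangle_x\,dt,
\]
where we used $R_x(\mathbf{0}_x)=x$. Next subtract $\langle\nabla(f\circ R_x)(\mathbf{0}_x),\mathbf{v}\rangle_x$, which equals $\langle \nabla f(x),\mathbf{v}\rangle_x$ by Equation \eqref{eq-gradient-coincide}. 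This gives
\[
f(R_x(\mathbf{v}))-f(x)-\langle \nabla f(x),\mathbf{v}\rangle_x=\int_0^1 \langle \nabla(f\circ R_x)(t\mathbf{v})-\nabla(f\circ R_x)(\mathbf{0}_x),\mathbf{v}\rangle_x\,dt .
\]

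The integrand is bounded by Cauchy--Schwarz and Inequality \eqref{eq-def-R-Lipschitz}, which gives at most $L\|t\mathbf{v}\|_x\|\mathbf{v}\|_x=Lt\|\mathbf{v}\|_x^2$. Integrating over $[0,1]$ yields $\frac{L}{2}\|\mathbf{v}\|_x^2$, which is Inequality \eqref{eq-R-Lipschitz-quadratic-bound}.

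The only point needing care, and the main obstacle, is that the $R$-Lipschitz hypothesis is assumed only at points of $C_x$, not on all of $T_x M$. So I must check that $t\mathbf{v}\in C_x$ for every $t\in[0,1]$. This follows from fiber-wise convexity together with $\mathbf{0}_x\in C_x$ from local surjectivity (b): $t\mathbf{v}=(1-t)\mathbf{0}_x+t\mathbf{v}$ is a convex combination of two points of $C_x$. With that verified, the estimate applies along the whole segment and the argument is complete.
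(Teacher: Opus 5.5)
Your proposal is correct and follows essentially the same argument as the paper: the fundamental theorem of calculus along $t\mapsto t\mathbf{v}$, replacing $\nabla(f\circ R_x)(\mathbf{0}_x)$ by $\nabla f(x)$ via Equation \eqref{eq-gradient-coincide}, then Cauchy--Schwarz with the $R$-Lipschitz bound $Lt\|\mathbf{v}\|_x^2$. As in the paper, you justify $t\mathbf{v}\in C_x$ by the convexity of $C_x$ and $\mathbf{0}_x\in C_x$.
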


\begin{proof}
Recall that $C_x$ is convex and $\mathbf{0}_x \in C_x$. So $t\mathbf{v} \in C_x$ for $t\in [0,1]$. Set $p(t)=f(R_x(t\mathbf{v}))$ for $t\in [0,1]$. Then $p'(t)=\left\langle  \nabla (f\circ R_x)(t\mathbf{v}), \mathbf{v}\right\rangle_x$. And, by Inequality \eqref{eq-def-R-Lipschitz} and Equation \eqref{eq-gradient-coincide},
\begin{eqnarray*}
f(R_x(\mathbf{v})) - f(x) & = & p(1) -p(0) = \int_0^1 p'(t) dt = \int_0^1 \left\langle  \nabla (f\circ R_x)(t\mathbf{v}), \mathbf{v}\right\rangle_x dt \\
& = & \int_0^1 \left\langle  \nabla f(x)+\nabla (f\circ R_x)(t\mathbf{v})-\nabla f(x), \mathbf{v}\right\rangle_x dt \\
& = &  \left\langle  \nabla f(x), \mathbf{v}\right\rangle_x + \int_0^1 \left\langle  \nabla (f\circ R_x)(t\mathbf{v})-\nabla f(x), \mathbf{v}\right\rangle_x dt \\
& \leq & \left\langle  \nabla f(x), \mathbf{v}\right\rangle_x + \int_0^1 \|\nabla (f\circ R_x)(t\mathbf{v})-\nabla f(x)\|_x \|\mathbf{v}\|_x dt \\
& \leq & \left\langle  \nabla f(x), \mathbf{v}\right\rangle_x + \int_0^1 Lt\|\mathbf{v}\|_x^2 dt  = \left\langle  \nabla f(x), \mathbf{v}\right\rangle_x + \frac{L}{2}\|\mathbf{v}\|_x^2
\end{eqnarray*}
\end{proof}

\begin{proof}[Proof for Proposition \ref{prop-rgpa-convergence-constant-stepsize}]
By Lemma \ref{lemma-R-Lipschitz-quadratic-bound},
\[
f(x_{k+1}) - f(x_k) \leq \left\langle \nabla f(x_k), \Pi_{C_{x_k}}(-s_k\nabla f (x_k)) \right\rangle_{x_k} + \frac{L}{2}\|\Pi_{C_{x_k}}(-s\nabla f (x_k))\|_{x_k}^2.
\]
Comparing this to Inequality \eqref{eq-fix-s-gradient-related-bound}, one gets that
\begin{equation}\label{eq-constant-stepsize-decreasing}
f(x_{k+1}) - f(x_k) \leq  (\frac{L}{2}-\frac{1}{s}) \|\Pi_{C_{x_k}}(-s\nabla f (x_k))\|_{x_k}^2\leq 0,
\end{equation}
where $\frac{L}{2}-\frac{1}{s} <0$ since $s<\frac{2}{L}$. Thus, $\{f(x_k)\}$ is a decreasing sequence. If $\{x_k\}$ has a subsequence $\{x_{k_l}\}$ converging to some $\bar{x} \in C$, then $\lim_{k\rightarrow \infty} f(x_k) = f(\bar{x})$ and  $\lim_{k\rightarrow \infty} (f(x_{k+1}) - f(x_k)) = 0$. Using Inequality \eqref{eq-constant-stepsize-decreasing} again, we get by the Squeeze Theorem that $\lim_{k\rightarrow \infty} \|\Pi_{C_{x_k}}(-s\nabla f (x_k))\|_{x_k} =0$. By the projective continuity of $\{C_x\}$, we have $\|\Pi_{C_{\bar{x}}}(-s\nabla f (\bar{x}))\|_{\bar{x}} = \lim_{l\rightarrow \infty} \|\Pi_{C_{x_{k_l}}}(-s\nabla f (x_{k_l}))\|_{x_{k_l}} =0$. This shows that $\Pi_{C_{\bar{x}}}(-s\nabla f (\bar{x})) =\mathbf{0}_{\bar{x}}$. By Lemma \ref{lemma-stationary-projection}, $\bar{x}$ is a stationary point of $f|_C$.
\end{proof}

\subsection{The Armijo rule along the projection arc}

\begin{lemma}\label{lemma-nonincreasing-auxilary-function}
For every $x \in C$ and $\mathbf{z} \in T_x M$, the function $g_{x,\mathbf{z}}: (0, \infty) \rightarrow \R$ defined by
\begin{equation}\label{eq-g-x-z-func}
g_{x,\mathbf{z}}(s) = \frac{\|\Pi_{C_x}(s\mathbf{z})\|_x}{s}
\end{equation}
for all $s > 0$, is decreasing.
\end{lemma}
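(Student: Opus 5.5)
The plan is to follow the classical Euclidean argument, since the statement only involves the single inner product space $(T_x M, \left\langle \ast,\ast\right\rangle_x)$ and the closed convex set $C_x$. All the geometry of $M$ and $R$ drops out, and only Proposition \ref{prop-convex-projection}(2) will be used. Here ``decreasing'' is to be read as non-increasing: for instance, if $\mathbf{z}$ lies in a cone contained in $C_x$, then $g_{x,\mathbf{z}}$ is constant. Fix $0<s_1<s_2$ and write $\mathbf{a}=\Pi_{C_x}(s_1\mathbf{z})$, $\mathbf{b}=\Pi_{C_x}(s_2\mathbf{z})$, $A=\|\mathbf{a}\|_x$, $B=\|\mathbf{b}\|_x$. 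The goal is to show $B/s_2\le A/s_1$.

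First I will apply the variational inequality \eqref{eq-convex-proj-inequality-usual} twice. For $s_1\mathbf{z}$ I test with $\mathbf{w}=\mathbf{b}\in C_x$, and for $s_2\mathbf{z}$ I test with $\mathbf{w}=\mathbf{a}\in C_x$. This gives
\[
\left\langle s_1\mathbf{z}-\mathbf{a},\mathbf{b}-\mathbf{a}\right\rangle_x\le 0,\qquad \left\langle s_2\mathbf{z}-\mathbf{b},\mathbf{a}-\mathbf{b}\right\rangle_x\le 0 .
\]
Dividing by $s_1$ and $s_2$ respectively, these become
\[
\frac{1}{s_2}\left\langle \mathbf{b},\mathbf{b}-\mathbf{a}\right\rangle_x\le \left\langle \mathbf{z},\mathbf{b}-\mathbf{a}\right\rangle_x\le \frac{1}{s_1}\left\langle \mathbf{a},\mathbf{b}-\mathbf{a}\right\rangle_x .
\]
This eliminates $\mathbf{z}$ and leaves
\[
s_1\left\langle \mathbf{b},\mathbf{b}-\mathbf{a}\right\rangle_x\le s_2\left\langle \mathbf{a},\mathbf{b}-\mathbf{a}\right\rangle_x ,
\]
which rearranges to $s_1B^2+s_2A^2\le (s_1+s_2)\left\langle \mathbf{a},\mathbf{b}\right\rangle_x$.

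Next I will bound the right-hand side by Cauchy--Schwarz, $\left\langle \mathbf{a},\mathbf{b}\right\rangle_x\le AB$. This yields
\[
s_1B^2-(s_1+s_2)AB+s_2A^2\le 0,\qquad\text{that is,}\qquad (s_1B-s_2A)(B-A)\le 0 .
\]
Viewed as a quadratic in $B$ with positive leading coefficient $s_1$, this forces $B$ to lie between its two roots $A$ and $\frac{s_2}{s_1}A$. In particular $B\le \frac{s_2}{s_1}A$, which is exactly $g_{x,\mathbf{z}}(s_2)\le g_{x,\mathbf{z}}(s_1)$. The degenerate case $A=0$ needs no separate treatment, because the inequality then reads $s_1B^2\le 0$ and gives $B=0$.

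The main obstacle is the middle step: choosing which test vectors to plug into the two projection inequalities so that $\mathbf{z}$ cancels and leaves a clean quadratic relation between $A$ and $B$. Once that relation is in hand, the factorization and the root-location argument are routine. As a byproduct, the other root gives $B\ge A$, so $s\mapsto\|\Pi_{C_x}(s\mathbf{z})\|_x$ is non-decreasing, which is likely useful later for the projection-arc Armijo analysis.
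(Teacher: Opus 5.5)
Your proof is correct, and it makes the same reduction as the paper: everything takes place in the single inner-product space $(T_xM,\langle\ast,\ast\rangle_x)$ with the closed convex set $C_x$. The paper stops at that point. Its proof is a one-line citation of Bertsekas's Lemma 2.3.1, which says that for a closed convex $X$, a point $y\in X$ and a vector $z$, the quantity $\|\Pi_X(y+sz)-y\|/s$ is nonincreasing in $s$. The paper applies this with base point $\mathbf{0}_x$, which implicitly uses $\mathbf{0}_x\in C_x$ (part (b) of local surjectivity).

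You instead prove the Euclidean inequality from scratch, and each step checks out:
\begin{itemize}
\item the two variational inequalities with the cross test vectors $\mathbf{b}$ and $\mathbf{a}$;
\item the elimination of $\mathbf{z}$, giving $s_1B^2+s_2A^2\le(s_1+s_2)\langle\mathbf{a},\mathbf{b}\rangle_x$;
\item Cauchy--Schwarz;
\item the factorization $(s_1B-s_2A)(B-A)\le 0$.
\end{itemize}

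Your route buys two things. First, it is self-contained. Second, it gives a slightly stronger result: you never use $\mathbf{0}_x\in C_x$, so the monotonicity holds for projection onto any nonempty closed convex subset of $T_xM$. You also get for free that $s\mapsto\|\Pi_{C_x}(s\mathbf{z})\|_x$ is nondecreasing. The paper's route buys brevity, at the cost of relying on an external result whose hypothesis must be matched to the bundle axioms.

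Your reading of ``decreasing'' as nonincreasing is also the right one. For example, $g_{x,\mathbf{z}}$ is constant when $C_x=T_xM$, and nonincreasing is all the later Armijo-along-the-projection-arc argument needs.
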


\begin{proof}
Apply \cite[Lemma 2.3.1]{Bertsekas-nonlinear-programming} to the tangent space $T_x M$, the convex subset $C_x \subset T_x M$ and the vector $\mathbf{0}_x$.
\end{proof}

In the rest of this section, when applying Lemma \ref{lemma-nonincreasing-auxilary-function}, we always take $\mathbf{z} = -\nabla f(x)$. And the function $g_{x, -\nabla f(x)}: (0, \infty) \rightarrow \R$ defined by
\begin{equation}\label{eq-g-x-grad-f-func}
g_{x, -\nabla f(x)}(s) = \frac{\|\Pi_{C_x}(-s\nabla f(x))\|_x}{s}
\end{equation}
for all $s > 0$, is decreasing.

\begin{lemma}\label{lemma-existence-armijo-step-along-projection-arc}
For every $x \in C$ and $\sigma \in (0, 1)$, there exists a scalar $s_x \in (0, 1]$ such that
\begin{equation}\label{eq-armijo-step-existence}
f(x) - f(R_x(\Pi_{C_x}(-s\nabla f(x))) \geq \sigma\langle \nabla f(x), -\Pi_{C_x}(-s\nabla f(x)) \rangle_x
\end{equation}
for all $s \in [0, s_x]$.
\end{lemma}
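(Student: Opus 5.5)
The plan is a first-order Taylor argument for $f\circ R_x$ at $\mathbf{0}_x$. It uses Inequality \eqref{eq-fix-s-gradient-related-bound} to bound the descent, and Lemma \ref{lemma-nonincreasing-auxilary-function} to keep the descent proportional to $\|d_s\|_x$ uniformly in small $s$. Write $d_s=\Pi_{C_x}(-s\nabla f(x))$ for $s\ge 0$.

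\textbf{Trivial cases.} For $s=0$ we have $d_0=\Pi_{C_x}(\mathbf{0}_x)=\mathbf{0}_x$, since $\mathbf{0}_x\in C_x$. Both sides of \eqref{eq-armijo-step-existence} then vanish. More generally, the inequality holds with equality at any $s$ with $d_s=\mathbf{0}_x$, because $R_x(\mathbf{0}_x)=x$. If $x$ is stationary, Lemma \ref{lemma-stationary-projection} gives $d_s=\mathbf{0}_x$ for all $s>0$, so $s_x=1$ works. Hence we may assume $x$ is nonstationary, and only need to treat those $s\in(0,1]$ with $d_s\neq\mathbf{0}_x$.

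\textbf{Key estimates.} The map $f\circ R_x:T_xM\to\R$ is $C^1$, and by Equation \eqref{eq-gradient-coincide} its gradient at $\mathbf{0}_x$ is $\nabla f(x)$. Therefore
\[
f(R_x(\mathbf{v}))-f(x)=\langle\nabla f(x),\mathbf{v}\rangle_x+\epsilon(\mathbf{v})\|\mathbf{v}\|_x,
\qquad \epsilon(\mathbf{v})\to 0 \text{ as } \mathbf{v}\to\mathbf{0}_x .
\]
By non-expansiveness (Proposition \ref{prop-convex-projection}(3)) and $\Pi_{C_x}(\mathbf{0}_x)=\mathbf{0}_x$, we get $\|d_s\|_x\le s\|\nabla f(x)\|_x$, so $d_s\to\mathbf{0}_x$ as $s\to 0$. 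Inequality \eqref{eq-fix-s-gradient-related-bound} gives $-\langle\nabla f(x),d_s\rangle_x\ge \|d_s\|_x^2/s=\|d_s\|_x\,g_{x,-\nabla f(x)}(s)$. Since $g_{x,-\nabla f(x)}$ is decreasing (Lemma \ref{lemma-nonincreasing-auxilary-function}), for $s\in(0,1]$ this yields
\[
-\langle\nabla f(x),d_s\rangle_x\ \ge\ \gamma\,\|d_s\|_x,
\qquad \gamma:=g_{x,-\nabla f(x)}(1)=\|\Pi_{C_x}(-\nabla f(x))\|_x .
\]
Here $\gamma>0$ by Lemma \ref{lemma-stationary-projection}, since $x$ is nonstationary.

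\textbf{Conclusion.} Inequality \eqref{eq-armijo-step-existence} is equivalent to $f(R_x(d_s))-f(x)\le\sigma\langle\nabla f(x),d_s\rangle_x$. By the expansion above, it suffices that
\[
(1-\sigma)\langle\nabla f(x),d_s\rangle_x+\epsilon(d_s)\|d_s\|_x\le 0 .
\]
Using the key estimate, this follows once $\epsilon(d_s)\le(1-\sigma)\gamma$. Choose $\delta>0$ with $|\epsilon(\mathbf{v})|\le(1-\sigma)\gamma$ whenever $0<\|\mathbf{v}\|_x\le\delta$. Then set
\[
s_x=\min\Bigl\{1,\ \frac{\delta}{\|\nabla f(x)\|_x}\Bigr\},
\]
which is positive because $\nabla f(x)\neq 0$ at a nonstationary point. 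For every $s\in(0,s_x]$ with $d_s\neq\mathbf{0}_x$ we have $\|d_s\|_x\le\delta$, so the inequality holds.

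\textbf{Main obstacle.} The delicate step is getting a lower bound on the descent that is \emph{linear} in $\|d_s\|_x$ uniformly as $s\to0$, so that it beats the $o(\|d_s\|_x)$ remainder. Inequality \eqref{eq-fix-s-gradient-related-bound} alone only gives the quadratic bound $\|d_s\|_x^2/s$. The monotonicity of $g_{x,-\nabla f(x)}$ from Lemma \ref{lemma-nonincreasing-auxilary-function} is exactly what converts it into the linear bound with constant $\gamma$.
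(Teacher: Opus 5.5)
Your proof is correct and follows essentially the same route as the paper. The projection inequality together with the monotonicity of $g_{x,-\nabla f(x)}$ gives the linear descent bound $-\langle \nabla f(x), d_s\rangle_x \geq \|\Pi_{C_x}(-\nabla f(x))\|_x\,\|d_s\|_x$ for $s\in(0,1]$, and this bound then dominates a vanishing first-order error term. The differences are cosmetic: the paper controls the error through the Mean Value Theorem and continuity of $\nabla(f\circ R_x)$ at $\mathbf{0}_x$, whereas you use the little-$o$ expansion at $\mathbf{0}_x$ and make explicit, via $\|d_s\|_x\le s\|\nabla f(x)\|_x$, the fact $d_s\to\mathbf{0}_x$ that the paper leaves implicit.
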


\begin{proof}
By Lemma \ref{lemma-stationary-projection}, if $x$ is a stationary point of $f$, for all $s> 0$,
\[
	 \Pi_{C_x}(-s\nabla f(x)) = \mathbf{0}_{x} \text{ and } f(R_x(\Pi_{C_x}(-s\nabla f(x)))) = f(R_x(\mathbf{0}_{x})) = f(x).
\]

In this case, we have
\[
f(x) - f(R_x(\Pi_{C_x}(-s\nabla f(x)))) = \sigma\langle \nabla f(x), -\Pi_{C_x}(-s\nabla f(x)) \rangle_x = 0.
\]
for all $s > 0$.

If $x$ is non-stationary, then $\Pi_{C_x}(-s\nabla f(x)) \neq \mathbf{0}_{x}$ for all $s > 0$ by Lemma \ref{lemma-stationary-projection}. By Proposition \ref{prop-convex-projection}(2), for all $s > 0$,
\[
\langle -s\nabla f(x) - \Pi_{C_x}(-s\nabla f(x)), \mathbf{0}_{x} -\Pi_{C_x}(-s\nabla f(x))  \rangle_x \leq 0.
\]
That is,
\begin{equation}\label{eq-lower-bound-armijo-step-projection-arc}
\langle \nabla f(x), - \Pi_{C_x}(-s\nabla f(x)) \rangle_x \geq \frac{\|\Pi_{C_x}(-s\nabla f(x))\|_x^2}{s}.
\end{equation}
By the Mean Value Theorem and Equation \eqref{eq-gradient-coincide}, for all $s > 0$,
\begin{align*}
&f(x) - f(R_x(\Pi_{C_x}(-s\nabla f(x))) \\
&= (f \circ R_x)(\mathbf{0}_{x}) - (f \circ R_x)(\Pi_{C_x}(-s\nabla f(x))) \\
&= \langle \nabla(f \circ R_x)(\xi_s), -\Pi_{C_x}(-s\nabla f(x)) \rangle_x \\
&= \langle \nabla f(x), -\Pi_{C_x}(-s\nabla f(x)) \rangle_x
+ \langle \nabla(f \circ R_x)(\xi_s) - \nabla(f \circ R_x)(\mathbf{0}_x), -\Pi_{C_x}(-s\nabla f(x)) \rangle_x
\end{align*}
where $\xi_s \in T_x M$ is on the line segment joining $\mathbf{0}_x$ and $\Pi_{C_x}(-s\nabla f(x))$. Since $C_x$ is convex, $\xi_s \in C_x$. Inequality \eqref{eq-armijo-step-existence} is equivalent to
\begin{equation}\label{eq-armijo-step-existence-reformulated}
(1 - \sigma)\langle \nabla f(x), -\Pi_{C_x}(-s\nabla f(x)) \rangle_x \geq \langle \nabla(f \circ R_x)(\mathbf{0}_x) - \nabla(f \circ R_x)(\xi_s), -\Pi_{C_x}(-s\nabla f(x)) \rangle_x.
\end{equation}
From Inequality \eqref{eq-lower-bound-armijo-step-projection-arc} and Lemma \ref{lemma-nonincreasing-auxilary-function}, for all $s \in (0, 1]$,
\[
\langle \nabla f(x), - \Pi_{C_x}(-s\nabla f(x)) \rangle_x \geq \frac{\|\Pi_{C_x}(-s\nabla f(x))\|_x^2}{s}
\geq \|\Pi_{C_x}(-\nabla f(x))\|_x \cdot \|\Pi_{C_x}(-s\nabla f(x))\|_x.
\]
Therefore, Inequality \eqref{eq-armijo-step-existence-reformulated} is true for all $s \in (0, 1]$ satisfying
\begin{equation}\label{eq-armijo-step-existence-reformulated-again}
(1 - \sigma)\|\Pi_{C_x}(-\nabla f(x))\|_x \geq \Big\langle \nabla(f \circ R_x)(\mathbf{0}_x) - \nabla(f \circ R_x)(\xi_s), \frac{-\Pi_{C_x}(-s\nabla f(x))}{\|\Pi_{C_x}(-s\nabla f(x))\|_x} \Big\rangle_x.
\end{equation}
Note that $f: M \rightarrow \R$ and $R_x: T_x M \rightarrow M$ are $C^1$, and
the right-hand side of Equation \eqref{eq-armijo-step-existence-reformulated-again} converges to $0$ as $s \rightarrow 0$.
Thus, there exists an $s_x \in (0, 1]$
such that Inequality \eqref{eq-armijo-step-existence-reformulated-again} is true for all $s \in (0, s_x]$.
Therefore, for such $s_x > 0$, Inequalities \eqref{eq-armijo-step-existence} and \eqref{eq-armijo-step-existence-reformulated} are also true.
\end{proof}

\begin{lemma}\label{lemma-stationarity-projection-arc}
Given a sequence $\{x_k\}_{k=0}^\infty$ generated by Algorithm \ref{algorithm-rgpa} with Stepsize Rule \ref{ur-Armijo-projection-arc}, if $x_k$ is a stationary point of $f|_C$ for some $k$, then $m_k = 0$ and $s_k = \bar{s}$, where $\bar{s} > 0$ is given in Stepsize Rule \ref{ur-Armijo-projection-arc}.
\end{lemma}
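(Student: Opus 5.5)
The plan is to show directly that the Armijo inequality \eqref{eq-Armijo-projection-arc-inequality} already holds for $m=0$ whenever $x_k$ is stationary. Then, since $m_k$ is by definition the smallest non-negative integer satisfying that inequality, $m_k=0$ and consequently $s_k=\beta^{0}\bar{s}=\bar{s}$.

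First, since $x_k$ is a stationary point of $f|_C$, I will invoke Lemma \ref{lemma-stationary-projection}, the implication (1) $\Rightarrow$ (3), with the positive scalar $\bar{s}>0$. This gives
\[
\Pi_{C_{x_k}}(-\bar{s}\nabla f(x_k)) = \mathbf{0}_{x_k}.
\]
Next, I substitute this into both sides of \eqref{eq-Armijo-projection-arc-inequality} with $m=0$.

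On the left-hand side, $R_{x_k}(\mathbf{0}_{x_k}) = x_k$ by property (1) of Definition \ref{def-retraction}. The left-hand side therefore equals $f(x_k)-f(x_k)=0$.

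On the right-hand side, the expression is $-\sigma\langle \nabla f(x_k),\mathbf{0}_{x_k}\rangle_{x_k}=0$.

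So the inequality reads $0\geq 0$, which holds. This is exactly the computation already carried out in the stationary case of the proof of Lemma \ref{lemma-existence-armijo-step-along-projection-arc}, now specialized to $s=\bar{s}$.

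Finally, $m=0$ is the smallest non-negative integer, so the minimality in Stepsize Rule \ref{ur-Armijo-projection-arc} forces $m_k=0$, and hence $s_k=\bar{s}$. There is no real obstacle here. The only point needing care is that Lemma \ref{lemma-stationary-projection}(3) gives the vanishing of the projection for \emph{every} $s>0$, in particular for $s=\bar{s}$, rather than only for some unspecified $s$. That is why I use (1) $\Rightarrow$ (3) and not (1) $\Rightarrow$ (2).
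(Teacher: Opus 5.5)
Your proposal is correct and follows the paper's proof: apply Lemma~\ref{lemma-stationary-projection} at $s=\bar{s}$ to get $\Pi_{C_{x_k}}(-\bar{s}\nabla f(x_k))=\mathbf{0}_{x_k}$. Then observe that Inequality~\eqref{eq-Armijo-projection-arc-inequality} reduces to $0\geq 0$ at $m=0$, which forces $m_k=0$ and $s_k=\bar{s}$. You spell out the two sides explicitly, using $R_{x_k}(\mathbf{0}_{x_k})=x_k$, where the paper only asserts that the inequality holds.
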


\begin{proof}
By Lemma \ref{lemma-stationary-projection}, if $x_k$ is a stationary point of $f|_C$ for some $k$, then $\Pi_{C_{x_k}}(-\bar{s}\nabla f (x_k)) = \mathbf{0}_{x_k}$. And Inequality \eqref{eq-Armijo-projection-arc-inequality} holds with $m_k = 0$. That is, $s_k = \beta^{m_k}\bar{s} = \bar{s}$.
\end{proof}

\begin{proof}[Proof for Proposition \ref{prop-rgpa-convergence} for Stepsize Rule \ref{ur-Armijo-projection-arc}]
Lemma \ref{lemma-existence-armijo-step-along-projection-arc} shows that the $s_k$ given by Stepsize Rule \ref{ur-Armijo-projection-arc} is well defined as a positive number for all $k$.
Let $\bar{x}$ be a limit point of a sequence $\{x_k\}$ generated by Algorithm \ref{algorithm-rgpa} with Stepsize Rule \ref{ur-Armijo-projection-arc}.
There is a subsequence $\{x_{k_l}\}_{l=0}^{\infty}$ of $\{x_k\}$ converging to $\bar{x}$.
By Inequality \eqref{eq-armijo-step-existence} and Stepsize Rule \ref{ur-Armijo-projection-arc}, $\{f(x_k)\}$ is a decreasing sequence.
So $\lim_{k \rightarrow \infty} f(x_k) = f(\bar{x})$ and, consequently, $\lim_{k \rightarrow \infty} (f(x_k) - f(x_{k+1})) = 0$. Next, we consider two cases.

\textbf{Case 1:} $\liminf_{l \rightarrow \infty} s_{k_l} > \hat{s}$ for some $\hat{s} > 0$.
There exist some $L$ such that $s_{k_l} \geq \hat{s}$ for all $l > L$.
Recall that $\bar{s} > 0$ is given in Stepsize Rule \ref{ur-Armijo-projection-arc}.
Since $s_{k_l} \leq \bar{s}$, by Inequality \eqref{eq-lower-bound-armijo-step-projection-arc} and Lemma \ref{lemma-nonincreasing-auxilary-function}, we have:
\begin{align*}
f(x_{k_l}) - f(x_{k_l + 1}) &\geq \sigma\langle \nabla f(x_{k_l}), -\Pi_{C_{x_{k_l}}}(-s_{k_l}\nabla f(x_{k_l})) \rangle_{x_{k_l}} \\
&\geq \sigma\frac{\|\Pi_{C_{x_{k_l}}}(-s_{k_l}\nabla f(x_{k_l}))\|_{x_{k_l}}^2}{s_{k_l}} \\
&\geq \sigma s_{k_l}\frac{\|\Pi_{C_{x_{k_l}}}(-s_{k_l}\nabla f(x_{k_l}))\|_{x_{k_l}}^2}{s_{k_l}^2} \\
&\geq \sigma \hat{s}\frac{\|\Pi_{C_{x_{k_l}}}(-\bar{s}\nabla f(x_{k_l}))\|_{x_{k_l}}^2}{\bar{s}^2}.
\end{align*}
Since $f$ is $C^1$, we have that $\lim_{l \rightarrow \infty} \nabla f(x_{k_l}) = \nabla f(\bar{x})$. By the projective continuity of the $R$-convex bundle $\{C_x\}_{x \in C}$ in Definition \ref{def-R-convex}, we have
\[
\lim_{l \rightarrow \infty}\|\Pi_{C_{x_{k_l}}}(-\bar{s}\nabla f(x_{k_l}))\|_{x_{k_l}} = \|\Pi_{C_{\bar{x}}}(-\bar{s}\nabla f(\bar{x}))\|_{\bar{x}}.
\]
Taking limit as $l \rightarrow \infty$ and we get
\[
0 \geq \sigma \hat{s}\frac{\|\Pi_{C_{\bar{x}}}(-\bar{s}\nabla f(\bar{x}))\|_{\bar{x}}^2}{\bar{s}^2}.
\]
And we get $\Pi_{C_{\bar{x}}}(-\bar{s}\nabla f(\bar{x})) = \mathbf{0}_{\bar{x}}$. By Lemma \ref{lemma-stationary-projection}, $\bar{x}$ is a stationary point of $f|_C$.

\textbf{Case 2:} $\liminf_{l \rightarrow \infty} s_{k_l} = 0$. There exists a subsequence $\{s_{k_{l_j}}\}_{j = 0}^\infty$ of $\{s_{k_l}\}$ such that $\lim_{j \rightarrow \infty} s_{k_{l_j}} = 0$.
Hence, there exists some integer $J$ such that $m_{k_{l_j}} \geq 1$ for all $j \geq J$ where $m_{k_{l_j}}$ is given in Stepsize Rule \ref{ur-Armijo-projection-arc}. Therefore,
\begin{equation}\label{eq-armijo-rule-fail}
f(x_{k_{l_j}}) - f(R_{x_{k_{l_j}}}(\Pi_{C_{x_{k_{l_j}}}}(\beta^{-1}s_{k_{l_j}}\nabla f(x_{k_{l_j}}))))
< \sigma \langle \nabla f(x_{k_{l_j}}), -\Pi_{C_{x_{k_{l_j}}}}(-\beta^{-1}s_{k_{l_j}}\nabla f(x_{k_{l_j}}))\rangle_{x_{k_{l_j}}}
\end{equation}
for all $j > J$. By Lemma \ref{lemma-stationarity-projection-arc}, $x_{k_{l_j}}$ is non-stationary for $j > J$. By Lemma \ref{lemma-stationary-projection}, we have
\[
\|\Pi_{C_{k_{l_j}}}(\beta^{-1}s_{k_{l_j}}\nabla f(x_{k_{l_j}}))\|_{x_{k_{l_j}}} > 0
\]
for all $j > J$. By the Mean Value Theorem, for all $j > J$,
\begin{align*}
&f(x_{k_{l_j}}) - f(R_{x_{k_{l_j}}}(\Pi_{C_{x_{k_{l_j}}}}(-\beta^{-1}s_{k_{l_j}}\nabla f(x_{k_{l_j}}))) \\
= \ &(f \circ R_{x_{k_{l_j}}})(\mathbf{0}_{x_{k_{l_j}}}) - (f \circ R_{x_{k_{l_j}}})(\Pi_{C_{x_{k_{l_j}}}}(-\beta^{-1}s_{k_{l_j}}\nabla f(x_{k_{l_j}}))) \\
= \ &\langle \nabla(f \circ R_{x_{k_{l_j}}})(\xi_j), -\Pi_{C_{x_{k_{l_j}}}}(-\beta^{-1}s_{k_{l_j}}\nabla f(x_{k_{l_j}})) \rangle_{x_{k_{l_j}}} \\
= \ &\langle \nabla f(x_{k_{l_j}}), -\Pi_{C_{x_{k_{l_j}}}}(-s_{k_{l_j}}\nabla f(x_{k_{l_j}})) \rangle_{x_{k_{l_j}}} \\
+ \ &\langle \nabla(f \circ R_{x_{k_{l_j}}})(\xi_j) - \nabla(f \circ R_{x_{k_{l_j}}})(\mathbf{0}_{x_{k_{l_j}}}), -\Pi_{C_{x_{k_{l_j}}}}(-\beta^{-1}s_{k_{l_j}}\nabla f(x_{k_{l_j}})) \rangle_{x_{k_{l_j}}}
\end{align*}
where $\xi_j$ is on the line segment joining $\mathbf{0}_{x_{k_{l_j}}}$ and $\Pi_{C_{x_{k_{l_j}}}}(-\beta^{-1}s_{k_{l_j}}\nabla f(x_{k_{l_j}}))$. Therefore, Inequality \eqref{eq-armijo-rule-fail} is equivalent to
\begin{align*}
& (1-\sigma)\langle \nabla f(x_{k_{l_j}}),  -\Pi_{C_{x_{k_{l_j}}}}(-\beta^{-1}s_{k_{l_j}}\nabla f(x_{k_{l_j}})) \rangle_{x_{k_{l_j}}} \\
& < \langle  \nabla(f \circ R_{x_{k_{l_j}}})(\xi_j) - \nabla(f \circ R_{x_{k_{l_j}}})(\mathbf{0}_{x_{k_{l_j}}}), \Pi_{C_{x_{k_{l_j}}}}(-\beta^{-1}s_{k_{l_j}}\nabla f(x_{k_{l_j}})) \rangle_{x_{k_{l_j}}}
\end{align*}
for all $j > J$. By Inequality \eqref{eq-lower-bound-armijo-step-projection-arc} and Lemma \ref{lemma-nonincreasing-auxilary-function}, we get
\begin{align*}
\langle \nabla f(x_{k_{l_j}}), -\Pi_{C_{x_{k_{l_j}}}}(-\beta^{-1}s_{k_{l_j}}\nabla f(x_{k_{l_j}})) \rangle_{x_{k_{l_j}}} & \geq
\frac{\|\Pi_{C_{x_{k_{l_j}}}}(-\beta^{-1}s_{k_{l_j}}\nabla f(x_{k_{l_j}}))\|_{x_{k_{l_j}}}^2}{\beta^{-1}s_{k_{l_j}}} \\
& \geq \frac{1}{\bar{s}}\|\Pi_{C_{x_{k_{l_j}}}}(-\bar{s}\nabla f(x_{k_{l_j}}))\|_{x_{k_{l_j}}} \|\Pi_{C_{x_{k_{l_j}}}}(-\beta^{-1}s_{k_{l_j}}\nabla f(x_{k_{l_j}}))\|_{x_{k_{l_j}}}\\
\end{align*}
for all $j > J$, where $\bar{s}$ is given in Stepsize Rule \ref{ur-Armijo-projection-arc}. Combining the two inequalities above, we get
\begin{align*}
&\frac{1 - \sigma}{\bar{s}}\|\Pi_{C_{x_{k_{l_j}}}}(-\bar{s}\nabla f(x_{k_{l_j}}))\|_{x_{k_{l_j}}} \|\Pi_{C_{x_{k_{l_j}}}}(-\beta^{-1}s_{k_{l_j}}\nabla f(x_{k_{l_j}}))\|_{x_{k_{l_j}}} \\
& < \langle \nabla(f \circ R_{x_{k_{l_j}}})(\xi_j) - \nabla(f \circ R_{x_{k_{l_j}}})(\mathbf{0}_{x_{k_{l_j}}}), \Pi_{C_{x_{k_{l_j}}}}(-\beta^{-1}s_{k_{l_j}}\nabla f(x_{k_{l_j}})) \rangle_{x_{k_{l_j}}} \\
& \leq \|\nabla(f \circ R_{x_{k_{l_j}}})(\xi_j) - \nabla(f \circ R_{x_{k_{l_j}}})(\mathbf{0}_{x_{k_{l_j}}})\|_{x_{k_{l_j}}} \|\Pi_{C_{x_{k_{l_j}}}}(-\beta^{-1}s_{k_{l_j}}\nabla f(x_{k_{l_j}})) \|_{x_{k_{l_j}}}
\end{align*}
for all $j > J$. That is,
\begin{equation}\label{eq-case-2-upper-bound}
\frac{1 - \sigma}{\bar{s}}\|\Pi_{C_{x_{k_{l_j}}}}(-\bar{s}\nabla f(x_{k_{l_j}}))\|_{x_{k_{l_j}}} < \|\nabla(f \circ R_{x_{k_{l_j}}})(\xi_j) - \nabla(f \circ R_{x_{k_{l_j}}})(\mathbf{0}_{x_{k_{l_j}}})\|_{x_{k_{l_j}}}
\end{equation}
for all $j > J$. Since $\lim_{j \rightarrow \infty} x_{k_{l_j}} = \bar{x}$ and $\lim_{j \rightarrow \infty} s_{k_{l_j}} = 0$, we have that $\xi_j \rightarrow \mathbf{0}_{\bar{x}}$ as $j \rightarrow \infty$.
By the projective continuity of the $R$-convexity bundle $\{C_x\}_{x \in C}$ in Definition \ref{def-R-convex},
\[
\Pi_{C_{\bar{x}}}(-\bar{s}\nabla f(\bar{x})) = \lim_{j \rightarrow \infty}\Pi_{C_{x_{k_{l_j}}}}(-\beta^{-1}s_{k_{l_j}}\nabla f(x_{k_{l_j}})) = \mathbf{0}_{\bar{x}}.
\]
By Lemma \ref{lemma-stationary-projection}, $\bar{x}$ is a stationary point of $f|_C$.
\end{proof}


\section{Supplemental Results for Section \ref{sec-wlra-constraint}}\label{sec-rgp}
Recall that we assume in Section \ref{sec-wlra-constraint} that
\begin{enumerate}
	\item $m$, $n$ and $p$ are fixed positive integers with the low-rank $p$ satisfying $p \leq \min\{m,n\}$;
	\item $\R^{m\times n}$ is the space of $m\times n$ matrices;
	\item $\St(n,p)$ is the $n\times p$ Stiefel manifold;
	\item $A=[a_{i,j}]$ in $\R^{m\times n}$ is a given matrix of constants, that is, the matrix of data to be approximated in WLRA;
	\item $W=[w_{i,j}]$ in $\R^{m\times n}$ is a given matrix of weights in WLRA satisfying $w_{i,j} \geq 0$ for $(i,j) \in \{1,2,\dots,m\}\times \{1,2,\dots,n\}$ and $\sum_{i=1}^m\sum_{j=1}^n w_{i,j} > 0$;
	\item the regularization parameter $\lambda > 0$ is a fixed positive real value.
\end{enumerate}

In subsections \ref{sec-orthogonal-proj-prod-mfd} - \ref{sec-retractions-prod-mfd}, we recall the geometry on the product manifold $\M$ given by Equation \eqref{eq-def-prod-mfd},
which also appeared in \cite{xu-yang-wu-2025-wlra-sgd-manifolds}.

\subsection{Orthogonal Projection onto $M$}\label{sec-orthogonal-proj-prod-mfd}
\begin{lemma}\cite[Equation (3.37)]{AMS}\label{lemma-gradient-submfd}
Let $\overline{M}$ be a Riemannian manifold and $f: \overline{M} \rightarrow \R$ be a differentiable function. Assume that $M$ is a Riemannian submanifold of $\overline{M}$. For any $x$ in $M$, denote by $\pi_x:T_x\overline{M} \rightarrow T_x M$ the orthogonal projection. Then $\nabla (f|_M)(x) = \pi_x(\nabla f(x))$ for any $x$ in $M$.
\end{lemma}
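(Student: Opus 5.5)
The statement is the standard fact that the Riemannian gradient of a restriction to a Riemannian submanifold is the orthogonal projection of the ambient gradient. I would prove it directly from the definition of the gradient as the dual of the differential, using only the compatibility of the metrics on $M$ and $\overline{M}$.

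First, fix $x\in M$ and write $\langle\ast,\ast\rangle_x$ for the ambient inner product on $T_x\overline{M}$. Since $M$ is a Riemannian submanifold, $T_xM\subset T_x\overline{M}$ is a linear subspace, and the metric of $M$ at $x$ is the restriction of $\langle\ast,\ast\rangle_x$. So $T_x\overline{M}=T_xM\oplus (T_xM)^\perp$ orthogonally, and $\pi_x$ is the associated projection. Consequently $\langle \mathbf{w}-\pi_x(\mathbf{w}),\mathbf{v}\rangle_x=0$ for every $\mathbf{w}\in T_x\overline{M}$ and $\mathbf{v}\in T_xM$.

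Second, compute the differential of the restriction. Let $\iota:M\to\overline{M}$ be the inclusion, so $f|_M=f\circ\iota$. By the chain rule, and since $d\iota_x$ is the inclusion $T_xM\hookrightarrow T_x\overline{M}$,
\[
d(f|_M)|_x(\mathbf{v})=df|_x(\mathbf{v})=\langle\nabla f(x),\mathbf{v}\rangle_x \quad\text{for all } \mathbf{v}\in T_xM.
\]

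Third, use the orthogonality from the first step to rewrite this as
\[
\langle\nabla f(x),\mathbf{v}\rangle_x=\langle\pi_x(\nabla f(x)),\mathbf{v}\rangle_x+\langle\nabla f(x)-\pi_x(\nabla f(x)),\mathbf{v}\rangle_x=\langle\pi_x(\nabla f(x)),\mathbf{v}\rangle_x .
\]
The vector $\pi_x(\nabla f(x))$ lies in $T_xM$ and represents $d(f|_M)|_x$ with respect to the metric of $M$. The inner product is nondegenerate on $T_xM$, so the representing vector is unique, and therefore $\nabla(f|_M)(x)=\pi_x(\nabla f(x))$.

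There is no real obstacle. The only point needing care is the first step: one must use that the submanifold metric is exactly the restriction of the ambient metric. This is what lets the same inner product appear on both sides, and it is what makes the orthogonal projection, rather than some other projection, the correct map.
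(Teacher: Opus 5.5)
Your proof is correct and is essentially the same argument as the source: the paper gives no proof of its own and cites Equation (3.37) of Absil, Mahony and Sepulchre, where the identity is derived this way. There it comes from $d(f|_M)|_x(\mathbf{v}) = \langle \nabla f(x), \mathbf{v}\rangle_x$ for $\mathbf{v}\in T_xM$, with the normal component of $\nabla f(x)$ dropping out by orthogonality and uniqueness of the representing vector finishing the argument.
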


\begin{lemma}\label{lemma-orthogonal-proj}
	\begin{enumerate}
		\item \cite[Example 3.6.2]{AMS} Denote the orthogonal projection onto the Stiefel manifold $\St(n,p)$ at $X \in \St(n,p)$ by $\Pi_X: \R^{n\times p} \rightarrow T_X \St(n,p)$. Then, for any $\xi$ in $\R^{n\times p}$,
		\begin{equation}\label{eq-def-orthogonal-proj-stiefel}
			\Pi_X(\xi)= (I_n -XX^T)\xi +\frac{1}{2}X(X^T\xi-\xi^TX).
		\end{equation}
		\item Denote the orthogonal projection onto the product manifold $\M$ at $(U, \mathbf{x}, V) \in \M$ by $\Pi_{(U, \mathbf{x}, V)}: \R^{m\times p} \times \R^p \times \R^{n\times p} \rightarrow T_{(U, \mathbf{x}, V)} \M$,
		where $T_{(U, \mathbf{x}, V)} \M$ is the tangent space of $\M$ at $(U, \mathbf{x}, V)$ given in Equation \eqref{eq-tangent-space}. Then, for any $(\xi, \mathbf{y}, \eta)$ in $\R^{m\times p} \times \R^p \times \R^{n\times p}$,
			\begin{equation}\label{eq-def-orthogonal-proj-prod-mfd}
				\Pi_{(U, \mathbf{x}, V)}(\xi, \mathbf{y}, \eta)= (\Pi_U(\xi), \mathbf{y}, \Pi_V(\eta)),
			\end{equation}
		where $\Pi_U$ and $\Pi_V$ are the orthogonal projections on Stiefel manifolds given in Equation \eqref{eq-def-orthogonal-proj-stiefel}.
	\end{enumerate}
\end{lemma}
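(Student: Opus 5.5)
For part (1) we rely on the cited result \cite[Example 3.6.2]{AMS}. For completeness, we will verify it directly using the characterization $T_X\St(n,p)=\{Z\in\R^{n\times p} ~|~ X^TZ+Z^TX=0\}$. There are two things to check.
\begin{enumerate}
\item The vector $\Pi_X(\xi)$ is tangent. Since $X^TX=I_p$, we get $X^T\Pi_X(\xi)=\frac{1}{2}(X^T\xi-\xi^TX)$, which is skew-symmetric. Hence $\Pi_X(\xi)\in T_X\St(n,p)$.
\item The residual is normal. We have $\xi-\Pi_X(\xi)=XX^T\xi-\frac{1}{2}X(X^T\xi-\xi^TX)=XS$ with $S=\frac{1}{2}(X^T\xi+\xi^TX)$ symmetric. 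For any tangent $Z$, $\operatorname{tr}(Z^TXS)=\operatorname{tr}((X^TZ)^TS)=0$, because $X^TZ$ is skew and $S$ is symmetric.
\end{enumerate}
So $\xi-\Pi_X(\xi)$ is Frobenius-orthogonal to $T_X\St(n,p)$. This characterizes the orthogonal projection uniquely.

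For part (2), we use the product metric fixed in Section \ref{sec-wlra-constraint}: $\langle(\xi,\mathbf{y},\eta),(\xi',\mathbf{y}',\eta')\rangle=\operatorname{tr}(\xi^T\xi')+\mathbf{y}^T\mathbf{y}'+\operatorname{tr}(\eta^T\eta')$ on $\R^{m\times p}\times\R^p\times\R^{n\times p}$. Its restriction is the metric on $\M$. By Equation \eqref{eq-tangent-space}, $T_{(U,\mathbf{x},V)}\M=T_U\St(m,p)\times\R^p\times T_V\St(n,p)$ is a product of subspaces of the respective factors.

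Set $w=(\Pi_U(\xi),\mathbf{y},\Pi_V(\eta))$. By part (1), $w$ lies in $T_{(U,\mathbf{x},V)}\M$. The residual is $(\xi-\Pi_U(\xi),\mathbf{0},\eta-\Pi_V(\eta))$. Pair it with an arbitrary tangent vector $(Z_U,\mathbf{z},Z_V)$; the inner product splits as a sum of three terms. The first and third terms vanish by part (1), and the middle term is $\mathbf{0}^T\mathbf{z}=0$. Since the orthogonal projection onto a subspace is the unique point whose residual is orthogonal to that subspace, $\Pi_{(U,\mathbf{x},V)}(\xi,\mathbf{y},\eta)=w$, which is Equation \eqref{eq-def-orthogonal-proj-prod-mfd}.

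There is no real obstacle in this argument. The only point needing care is to use the tangent-space description of the Stiefel manifold and the block-diagonal form of the product metric consistently, so that orthogonality can be checked factor by factor.
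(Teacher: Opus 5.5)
Your proof is correct and matches what the paper relies on: the paper gives no written proof, citing \cite[Example 3.6.2]{AMS} for part (1) and treating part (2) as immediate from the product form of $T_{(U,\mathbf{x},V)}\M$ and of the metric. Your direct check (that $X^T\Pi_X(\xi)$ is skew-symmetric and that the residual $XS$, with $S$ symmetric, is Frobenius-orthogonal to every tangent vector, followed by the factor-by-factor orthogonality argument on the product) is exactly the standard derivation behind both parts, so nothing is missing.
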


\subsection{Gradient of the WLRA Loss on $\M$}\label{sec-grad-wlra}
With $h: \R^{m\times n} \rightarrow \R$ given in Equation \eqref{eq-def-wlra-loss}, define a function $\tilde{H}: \R^{m\times p}\times \R^p\times \R^{n\times p} \rightarrow \R$ by
	\[
		\tilde{H}(U, \mathbf{x}, V) = h(UD_p(\mathbf{x})V^T)
	\]
for $(U, \mathbf{x}, V) \in \R^{m\times p}\times \R^p\times \R^{n\times p}$, where $D_p: \R^p \rightarrow \R^{p\times p}$ is given in Equation \eqref{eq-def-d-p}.
Note that
\begin{equation}\label{eq-submfd}
	H = \tilde{H}\vert_{\M}.
\end{equation}
Define $(G_U, G_{\mathbf{x}}, G_V) \in \R^{m\times p}\times \R^p \times \R^{n\times p}$ by
	\begin{align}
		G_U =& \left[\frac{\partial \tilde{H}}{\partial u_{i,l}}\right]_{m\times p}
			= \left[\sum_{j=1}^n -2w_{i,j}(a_{i,j} - p_{i,j})x_l v_{j,l}\right]_{m\times p}, \label{eq-def-grad-u}\\
		G_V =& \left[\frac{\partial \tilde{H}}{\partial v_{j,l}}\right]_{n\times p}
			= \left[\sum_{i=1}^m -2w_{i,j}(a_{i,j} - p_{i,j})x_l u_{i,l}\right]_{n\times p}, \label{eq-def-grad-v} \\
		G_{\mathbf{x}} =& \left[\frac{\partial \tilde{H}}{\partial x_l}\right]_p
					   = \left[\sum_{i=1}^m\sum_{j=1}^n -2w_{i,j}(a_{i,j} - p_{i,j})u_{i, l}v_{j, l}\right]_p. \label{eq-def-grad-x}
	\end{align}
Note that, the gradient of $\tilde{H}$ at $(U, \mathbf{x}, V) \in \R^{m\times p}\times \R^p\times \R^{n\times p}$ is given by
\begin{equation}\label{eq-def-euclidean-grad}
	\nabla \tilde{H}(U, \mathbf{x}, V) = (G_U, G_{\mathbf{x}}, G_V).
\end{equation}

\begin{lemma}\label{lemma-rie-grad-wlra-loss-mfd}
At $(U, \mathbf{x}, V) \in \M$ the gradient of the function $H: \M \rightarrow \R$, given in Equation \eqref{eq-def-wlra-loss-mfd}, is given by
\begin{equation}\label{eq-def-rie-grad}
	\nabla H(U, \mathbf{x}, V) =  (\Pi_U(G_U), G_{\mathbf{x}}, \Pi_V(G_V))
\end{equation}
where $G_U$, $G_V$ and $G_{\mathbf{x}}$ are given by Equations \eqref{eq-def-grad-u}, \eqref{eq-def-grad-v} and \eqref{eq-def-grad-x} respectively, and $\Pi_U$ and $\Pi_V$ are given by Equation \eqref{eq-def-orthogonal-proj-stiefel}.
\end{lemma}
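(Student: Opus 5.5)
The statement to prove is Lemma \ref{lemma-rie-grad-wlra-loss-mfd}. The natural route is Lemma \ref{lemma-gradient-submfd} applied to the ambient Euclidean space $\overline{M} = \R^{m\times p}\times \R^p\times \R^{n\times p}$ with the Frobenius/Euclidean product inner product. In that case $\M$ is a Riemannian submanifold and $H = \tilde{H}|_{\M}$ by Equation \eqref{eq-submfd}.

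First, I would confirm that the product metric on $\M$ stated in Section \ref{sec-wlra-constraint} is exactly the metric induced from this ambient inner product. Each Stiefel factor carries the induced Frobenius metric, and $\R^p$ is an open (indeed full) factor with the Euclidean metric. Hence $\M$ is an embedded Riemannian submanifold of $\overline{M}$, and Lemma \ref{lemma-gradient-submfd} gives
\[
\nabla H(U,\mathbf{x},V) = \Pi_{(U,\mathbf{x},V)}\bigl(\nabla \tilde{H}(U,\mathbf{x},V)\bigr).
\]

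Second, I would justify Equation \eqref{eq-def-euclidean-grad}. This says the Euclidean gradient of $\tilde{H}$ is the array of partial derivatives $(G_U,G_{\mathbf{x}},G_V)$, which holds because the ambient inner product is the standard entrywise one. To check the explicit formulas \eqref{eq-def-grad-u}--\eqref{eq-def-grad-x}, write $p_{i,j} = \sum_{l} u_{i,l}x_l v_{j,l}$ and apply the chain rule to $h$:
\[
\frac{\partial h}{\partial p_{i,j}} = -2w_{i,j}(a_{i,j}-p_{i,j}),
\qquad
\frac{\partial p_{i,j}}{\partial u_{i',l}} = \delta_{i,i'}\,x_l v_{j,l},
\]
and similarly for $v_{j,l}$ and $x_l$. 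This is routine.

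Finally, I would apply Lemma \ref{lemma-orthogonal-proj}(2), which acts componentwise and is the identity on the $\R^p$ factor. This yields $(\Pi_U(G_U), G_{\mathbf{x}}, \Pi_V(G_V))$, which is the claimed formula \eqref{eq-def-rie-grad}.

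The only mildly delicate step is the first one: the projection formula in Lemma \ref{lemma-orthogonal-proj}(2) must be orthogonal with respect to the same inner product that defines the metric on $\M$. This holds because the product metric is a sum of the factor metrics, so the orthogonal projection onto a product of subspaces is the product of the factor projections.
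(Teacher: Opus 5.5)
Your proposal is correct and matches the paper's proof: write $H=\tilde{H}|_{\M}$, apply Lemma~\ref{lemma-gradient-submfd} in the Euclidean ambient space $\R^{m\times p}\times\R^p\times\R^{n\times p}$, and use the componentwise projection of Lemma~\ref{lemma-orthogonal-proj}(2). Your additional checks (the chain-rule derivation of $G_U$, $G_{\mathbf{x}}$, $G_V$, and that the product metric is the induced one) are details the paper leaves implicit.
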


\begin{proof}
Since the product manifold $\M$ is a submanifold of $\R^{m\times p}\times \R^p\times \R^{n\times p}$, by Lemma \ref{lemma-gradient-submfd} and Equation \eqref{eq-submfd}, we have
\begin{align*}
	\nabla H(U, \mathbf{x}, V) &= \nabla (\tilde{H}\vert_{\M})(U, \mathbf{x}, V)
	= \Pi_{(U, \mathbf{x}, V)}(\nabla \tilde{H}(U, \mathbf{x}, V)) \\
	&= \Pi_{(U, \mathbf{x}, V)}(G_U, G_{\mathbf{x}}, G_v) = (\Pi_U(G_U), G_{\mathbf{x}}, \Pi_V(G_V))
\end{align*}
for $(U, \mathbf{x}, V) \in \M$, where $\nabla \tilde{H}(U, \mathbf{x}, V)$ is given in Equation \eqref{eq-def-euclidean-grad} and $\Pi_{(U, \mathbf{x}, V)}$ is given in Equation \eqref{eq-def-orthogonal-proj-prod-mfd}.
\end{proof}

\subsection{Projection-like Retractions on $\M$}\label{sec-retractions-prod-mfd}
We recall two projection-like retractions on the Stiefel manifolds and extend them to the product manifold $\M$.

\subsubsection{The QR-Decomposition-Based Retraction on the Manifold $\M$}
\begin{definition}\label{def-qf}
	Assume that $p\leq n$. For an $n\times p$ real matrix $C$ with linearly independent columns, define $\qf(C)=Q$ in the $QR$ decomposition $C=QR$, where
	\begin{itemize}
		\item $Q\in \St(n,p)$,
		\item $R$ is a $p \times p$ upper triangular matrix with positive entries along its diagonal.
	\end{itemize}
	Computationally, $\qf(C)$ can be obtained by applying the Gram-Schmidt Process on the columns of $C$, scaling the resulting orthogonal set of vectors into an orthonormal set and then using this orthonormal set of vectors as the columns of $\qf(C)$.
\end{definition}

\begin{lemma}\cite[Example 4.1.3]{AMS}\label{lemma-stiefel-qr}
	For any $X\in \St(n,p)$ and $\xi \in T_X \St(n,p)$, $X+\xi \in \R^{n\times p}$ has linearly independent columns. Define $R^{\St(n,p)}: T\St(n,p)\rightarrow \St(n,p)$ by
	\begin{equation}\label{eq-def-qr-retraction-stiefel}
		R^{\St(n,p)}_X(\xi) = \qf(X+\xi).
	\end{equation}
	$R^{\St(n,p)}$ is a retraction on $\St(n,p)$.
\end{lemma}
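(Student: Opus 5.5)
The statement to prove is Lemma~\ref{lemma-stiefel-qr}: $X+\xi$ has linearly independent columns, and $R^{\St(n,p)}_X(\xi)=\qf(X+\xi)$ is a retraction. I would first settle well-definedness, then smoothness, then the two conditions of Definition~\ref{def-retraction}.

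\textbf{Step 1 (full column rank).} Since $\xi \in T_X \St(n,p)$, we have $X^T\xi+\xi^TX=0$, so $X^T\xi$ is skew-symmetric. For $\mathbf{a}\in\R^p$ this gives $\mathbf{a}^TX^T\xi\mathbf{a}=0$. Hence
\[
\|(X+\xi)\mathbf{a}\|^2=\|\mathbf{a}\|^2+2\,\mathbf{a}^TX^T\xi\mathbf{a}+\|\xi\mathbf{a}\|^2=\|\mathbf{a}\|^2+\|\xi\mathbf{a}\|^2 .
\]
This is positive whenever $\mathbf{a}\neq \mathbf{0}$, so $X+\xi$ has linearly independent columns and $\qf(X+\xi)$ is defined by Definition~\ref{def-qf}.

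\textbf{Step 2 (smoothness).} Write $C=X+\xi$. The factor $R$ in $C=QR$ is the transpose of the Cholesky factor of $C^TC$, which is positive definite. The Cholesky factorization with positive diagonal is a smooth function of a positive definite matrix, so $\qf(C)=CR^{-1}$ is smooth on the open set of full-rank $n\times p$ matrices. Composing with the smooth map $(X,\xi)\mapsto X+\xi$ on $T\St(n,p)$, the map $R^{\St(n,p)}$ is $C^\infty$, in particular $C^1$. Its values lie in $\St(n,p)$ by construction.

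\textbf{Step 3 (centering).} $R^{\St(n,p)}_X(\mathbf{0})=\qf(X)=X$, because $X=X\cdot I_p$ is a QR decomposition with positive diagonal, and that decomposition is unique.

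\textbf{Step 4 (rigidity), the main computation.} Take $\xi$ tangent and put $Q(t)=\qf(X+t\xi)$, $R(t)$ the corresponding triangular factor. Then $Q(0)=X$ and $R(0)=I$. Differentiating $X+t\xi=Q(t)R(t)$ at $t=0$ gives
\[
\xi=\dot Q+X\dot R .
\]
Differentiating $Q^TQ=I$ shows $X^T\dot Q$ is skew-symmetric. Multiplying the first relation by $X^T$ gives $X^T\xi=X^T\dot Q+\dot R$. Here $X^T\xi$ and $X^T\dot Q$ are skew, while $\dot R$ is upper triangular. The only matrix that is both upper triangular and skew-symmetric (being a difference of skew matrices) is zero, so $\dot R=0$. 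Therefore $\dot Q=\xi$, which means $dR_X|_{\mathbf{0}}=\id$.

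The only delicate point is justifying differentiability of the QR factors in Step 2. I would handle it through the Cholesky argument above, or by the implicit function theorem applied to the map $(Q,R)\mapsto QR$.
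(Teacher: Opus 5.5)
Your proof is correct, and it takes a different route from the paper's, which gives no argument of its own and simply cites \cite[Example 4.1.3]{AMS}. There, the QR retraction is an instance of the general construction that precedes that example: if $\phi: M\times N\to E_*$ is a diffeomorphism onto an open subset of the ambient space and has a neutral element $I\in N$ with $\phi(F,I)=F$, then $\xi\mapsto \pi_1(\phi^{-1}(X+\xi))$ is a retraction. For $\St(n,p)$ one takes $\phi(Q,R)=QR$, with $R$ ranging over upper triangular matrices with positive diagonal. Rigidity then follows abstractly from $D\phi(X,I)[\xi,0]=\xi$ and the invertibility of $D\phi(X,I)$.

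Your Step 4 is the hands-on version of that invertibility. The identity $\xi=\dot Q+X\dot R$, together with the fact that a matrix which is both skew-symmetric and upper triangular must vanish, shows directly that the unique preimage of $\xi$ is $(\xi,0)$.

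What your route buys is self-containment. The general construction only defines $R_X(\xi)$ where $X+\xi$ lies in the open set of full-rank matrices. Your Step 1 is what guarantees this for every tangent vector, so that $R$ is defined on all of $T\St(n,p)$, as the lemma asserts. Your Step 2, via Cholesky, supplies exactly the smoothness of the inverse factorization that the diffeomorphism hypothesis presupposes.

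What the cited route buys is economy and reuse. The same construction, with $N$ the symmetric positive definite matrices and $\phi(X,P)=XP$, gives the polar retraction of Lemma~\ref{lemma-stiefel-polar} with no new rigidity computation.
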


\begin{definition}[QR-Decomposition-Based Retraction on $\M$]\label{def-retraction-qr}
	For $(U,\mathbf{x},V) \in \M$ and $(\xi,\hat{\mathbf{x}},\eta) \in T_{(U, \mathbf{x}, V)}\M$, define
	\begin{equation}\label{eq-def-qr-retraction}
		R_{(U,\mathbf{x},V)}(\xi,\hat{\mathbf{x}},\eta) = (\qf(U+\xi),\mathbf{x} +\hat{\mathbf{x}}, \qf(V +\eta)).
	\end{equation}
	Then, by Lemma \ref{lemma-stiefel-qr}, $R: T\M \rightarrow \M$ is a retraction on $\M$. We call it the QR-decomposition-based retraction on $\M$.
\end{definition}

\subsubsection{The Polar-Decomposition-Based Retraction on the Manifold $\M$}
\begin{lemma}\cite[Example 4.1.3]{AMS}\label{lemma-stiefel-polar}
	For any $X\in \St(n,p)$ and $\xi \in T_X \St(n,p)$, define $R^{\St(n,p)}: T\St(n,p)\rightarrow \St(n,p)$ by
	\begin{equation}\label{eq-def-polar-retraction-stiefel}
		R^{\St(n,p)}_X(\xi) = \pf(X,\xi):= (X + \xi)(I_p + \xi^T\xi)^{-1/2}.
	\end{equation}
	$R^{\St(n,p)}$ is a retraction on $\St(n,p)$.
\end{lemma}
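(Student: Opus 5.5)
The plan is to verify directly the two conditions of Definition~\ref{def-retraction}, after first checking that $\pf$ is well defined, lands in $\St(n,p)$, and is at least $C^1$ on $T\St(n,p)$. Throughout, I use the standard description $T_X\St(n,p)=\{\xi\in\R^{n\times p} ~|~ X^T\xi+\xi^TX=0\}$ for $X\in\St(n,p)$, which is implicit in the projection formula of Lemma~\ref{lemma-orthogonal-proj}.

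\textbf{Well-definedness and image.} For any $\xi$, the matrix $I_p+\xi^T\xi$ is symmetric positive definite, since $v^T(I_p+\xi^T\xi)v=\|v\|^2+\|\xi v\|^2>0$ for $v\neq 0$. Hence its inverse square root exists, is unique among symmetric positive definite matrices, and is symmetric. Next, using $X^TX=I_p$ and the tangency condition $X^T\xi+\xi^TX=0$, I will expand
\[
(X+\xi)^T(X+\xi)=X^TX+X^T\xi+\xi^TX+\xi^T\xi=I_p+\xi^T\xi.
\]
Writing $S=I_p+\xi^T\xi$ and $Y=\pf(X,\xi)=(X+\xi)S^{-1/2}$, it follows that $Y^TY=S^{-1/2}SS^{-1/2}=I_p$. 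So $Y\in\St(n,p)$.

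\textbf{Smoothness.} This is the step I expect to need the most care, namely smoothness of $S\mapsto S^{-1/2}$ on the open cone $\mathcal{P}$ of symmetric positive definite matrices. I will argue as follows.
\begin{enumerate}
\item The squaring map $A\mapsto A^2$ is a smooth bijection from $\mathcal{P}$ onto itself.
\item Its differential at $A$ is the Sylvester operator $H\mapsto AH+HA$ on symmetric matrices.
\item Diagonalize $A=Q\Lambda Q^T$. In this basis the operator acts entrywise by multiplication by $\lambda_i+\lambda_j>0$, so it is injective and hence invertible.
\item By the inverse function theorem, the square root $\mathcal{P}\to\mathcal{P}$ is smooth.
\item Composing with the smooth matrix inversion gives smoothness of $S\mapsto S^{-1/2}$.
\end{enumerate}
Consequently $(X,\xi)\mapsto (X+\xi)(I_p+\xi^T\xi)^{-1/2}$ is smooth on $\R^{n\times p}\times\R^{n\times p}$, and in particular its restriction to the embedded submanifold $T\St(n,p)$ is smooth, hence $C^1$.

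\textbf{Retraction axioms.} First, $R^{\St(n,p)}_X(\mathbf{0})=X\,I_p^{-1/2}=X$. For the differential, fix $\xi\in T_X\St(n,p)$ and consider the curve $t\mapsto (X+t\xi)(I_p+t^2\xi^T\xi)^{-1/2}$. By the smoothness just established, $(I_p+t^2\xi^T\xi)^{-1/2}=I_p+O(t^2)$, since it is a smooth function of $t^2$ equal to $I_p$ at $t=0$. Therefore the curve equals $X+t\xi+O(t^2)$, and its derivative at $t=0$ is $\xi$. This gives $dR^{\St(n,p)}_X|_{\mathbf{0}}=\id_{T_X\St(n,p)}$ and completes the verification.
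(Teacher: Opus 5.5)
Your proof is correct, but it follows a different route from the paper. The paper gives no argument of its own; it cites \cite[Example 4.1.3]{AMS}, where the polar retraction is an instance of a general construction (AMS, Proposition 4.1.2). That construction says: if $\phi: M\times N\to E_\ast$ is a diffeomorphism onto an open subset of the ambient vector space, and $N$ has a neutral element $I$ with $\phi(F,I)=F$, then $R_X(\xi)=\pi_1(\phi^{-1}(X+\xi))$ is a retraction. For the polar case one takes $\phi(Q,S)=QS$, mapping $\St(n,p)$ times the cone of symmetric positive definite $p\times p$ matrices onto the full-rank $n\times p$ matrices. Then $R_X(\xi)$ is the orthogonal polar factor $(X+\xi)\bigl((X+\xi)^T(X+\xi)\bigr)^{-1/2}$, and your identity $(X+\xi)^T(X+\xi)=I_p+\xi^T\xi$ turns this into the stated formula. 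In that framework, $dR_X|_{\mathbf{0}}=\id$ follows directly by differentiating $\phi(F,I)=F$. The same proposition also yields the QR retraction of Lemma \ref{lemma-stiefel-qr}. The price is that one must know $\phi$ is a diffeomorphism, and that rests on the smoothness of the positive definite square root, which is exactly what items 1--5 of your smoothness argument establish. Your direct verification avoids the product-diffeomorphism machinery and is self-contained, but it only covers this one formula. Two small points are worth making explicit. In item 4, the inverse function theorem gives only local smooth inverses; it is the bijectivity from item 1 that identifies them with the global square root. Also, smoothness as a map into $\St(n,p)$, rather than into $\R^{n\times p}$, holds because $\St(n,p)$ is an embedded submanifold containing the image.
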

When $p$ is small, the numerical cost of evaluating Equation \ref{eq-def-polar-retraction} is reasonable because it only involves the eigenvalue decomposition of the small $p\times p$ matrix $I_p + \xi^T\xi$.

\begin{definition}[Polar-Decomposition-Based Retraction on $\M$]\label{def-retraction-polar}
	For $(U,\mathbf{x}, V) \in \M$ and $(\xi,\hat{\mathbf{x}},\eta) \in T_{(U, \mathbf{x}, V)}\M$, define
	\begin{equation}\label{eq-def-polar-retraction}
		R_{(U,\mathbf{x},V)}(\xi,\hat{\mathbf{x}},\eta) = (\pf(U,\xi),\mathbf{x}+\hat{\mathbf{x}}, \pf(V,\eta)).
	\end{equation}
	Then, by Lemma \ref{lemma-stiefel-polar}, $R: T\M \rightarrow \M$ is a retraction on $\M$. We call it the polar-decomposition-based retraction on $\M$.
\end{definition}

\subsection{Retraction-Based Gradient Projection}\label{subsec-rgp}
Recall that, with a retraction $R$ given by Equation \eqref{eq-def-qr-retraction} or \eqref{eq-def-polar-retraction},
we can fix the $R$-convex set $C^r$ given in Problem \ref{prob-wlra-constrained} on the product manifold $\M$ with the $R$-convexity bundle $\{C^r_{(U, \mathbf{x}, V)}\}_{(U, \mathbf{x}, V) \in C^r}$ given in Equation \eqref{eq-def-r-convex-bundle-wlra}.
Also, recall that the projection onto the closed ball $\overline{B}_r(\mathbf{x})$ is $\Pi_{\overline{B}_r(\mathbf{x})}: \R^p \rightarrow \overline{B}_r(\mathbf{x})$ given by
\begin{equation}\label{eq-projection-closed-ball}
\Pi_{\overline{B}_r(\mathbf{x})}(\mathbf{y}) = \mathbf{x} + \min\{1, \frac{r}{\|\mathbf{y} - \mathbf{x}\|}\}(\mathbf{y} - \mathbf{x}).
\end{equation}
Note that,
\begin{equation}\label{eq-projection-opposite-ball}
\Pi_{\overline{B}_r(-\mathbf{x})}(\mathbf{y}) = -\mathbf{x} + \Pi_{\overline{B}_r(\mathbf{0})}(\mathbf{x} + \mathbf{y})
\end{equation}
where $\Pi_{\overline{B}_r(\mathbf{0})}$ is the projection onto the closed ball $\overline{B}_r(\mathbf{0})$.

Denote by $\id_{T_U \St(m,p)}$ and $\id_{T_V \St(n,p)}$ the identity maps on $T_U \St(m,p)$ and $T_V \St(n,p)$, respectively.
The projection map $\Pi_{C^r_{(U, \mathbf{x}, V)}}: T_U \St(m,p) \times \R^p \times T_V \St(n,p) \rightarrow C^r_{(U, \mathbf{x}, V)}$ is defined by
\begin{equation}\label{eq-proj-prod-mfd}
\Pi_{C^r_{(U, \mathbf{x}, V)}} = \id_{T_U \St(m,p)} \times \Pi_{\overline{B}_r(-\mathbf{x})} \times \id_{T_V \St(n,p)}
\end{equation}
where $\Pi_{\overline{B}_r(-\mathbf{x})}$ is given in Equation \eqref{eq-projection-opposite-ball}.

Now, we are ready to introduce our retraction-based gradient projection algorithms for Problem \ref{prob-wlra-constrained}.

\FloatBarrier

\begin{algorithm}[!htbp]
\caption{Retraction-Based Gradient Projection for Problem \ref{prob-wlra-constrained} with Projection-Like Retractions}
\label{algorithm-rgpa-wlra-retractions}
\begin{algorithmic}[1]
\Require the product manifold $\M$ given in Equation \eqref{eq-def-prod-mfd},
the $R$-convex subset $C^r$ of $\M$ given in Problem \ref{prob-wlra-constrained} with the $R$-convex bundle $\{C_{(U,\mathbf{x},V)}^r\}_{(U,\mathbf{x},V)\in C^r}$ given in Equation \eqref{eq-def-r-convex-bundle-wlra},
$(U_0,\mathbf{x}_0,V_0)\in C^r$,
the objective $H:\M\to\R$ given in Equation \eqref{eq-def-wlra-loss-mfd} with the gradient $\nabla H$ given by Equation \eqref{eq-def-rie-grad},
a retraction $R$ given by Equation \eqref{eq-def-qr-retraction} or \eqref{eq-def-polar-retraction},
a stepsize rule given by Rule \ref{ur-Armijo-feasible} or \ref{ur-Armijo-projection-arc} for the sequences $\{\alpha_k\}$ and $\{s_k\}$
\For{$k=0,1,2,\ldots$}
    \State Let $\Pi_{C_{(U_k,\mathbf{x}_k,V_k)}^r}:T_{(U_k,\mathbf{x}_k,V_k)}\M \rightarrow C_{(U_k,\mathbf{x}_k,V_k)}^r$ be the projection onto $C_{(U_k,\mathbf{x}_k,V_k)}^r$ given in Equation \eqref{eq-proj-prod-mfd}
    \State $d_k \gets \Pi_{C_{(U_k,\mathbf{x}_k,V_k)}^r}\bigl(-s_k\nabla H(U_k,\mathbf{x}_k,V_k)\bigr)$
    \State $(U_{k+1},\mathbf{x}_{k+1},V_{k+1})\gets R_{(U_k,\mathbf{x}_k,V_k)}(\alpha_k d_k)$
\EndFor
\end{algorithmic}
\end{algorithm}

\FloatBarrier

Note that, in Algorithm \ref{algorithm-rgpa-wlra-retractions}, $(U_k,\mathbf{x}_k,V_k) \in C^r$ and $d_k \in  C^r_{(U_k,\mathbf{x}_k,V_k)}$ for $k = 0, 1, 2, \ldots$.

\subsection{Proofs of Proposition \ref{prop-R-convexity-wlra} and Lemma \ref{lemma-relation-bw-reg-constraint}}\label{app-proofs-more}

For $\mathbf{x} \in \R^p$ and $r > 0$, denote the open ball centered at $\mathbf{x}$ with radius $r$ by
\[
B_r(\mathbf{x}) = \{\mathbf{z} \in \R^p ~|~ \|\mathbf{z} - \mathbf{x} \| < r\}.
\]

\begin{proof}[Proof of Proposition \ref{prop-R-convexity-wlra}]
Since $\overline{B}_r(-\mathbf{x})$ is closed and convex in $\R^p$, $C^r_{(U, \mathbf{x}, V)}$ is closed and convex in $T_{(U, \mathbf{x}, V)}\M$ given in Equation \eqref{eq-tangent-space} for all $(U, \mathbf{x}, V) \in C^r$.
Therefore, $\{C^r_{(U, \mathbf{x}, V)}\}_{(U, \mathbf{x}, V) \in C^r}$ is fiber-wise convex.

For all $(U, \mathbf{x}, V) \in C^r$, since $R_{\mathbf{x}}^{\R^p}(\overline{B}_r(-\mathbf{x})) = \overline{B}_r(\mathbf{0})$ where $R^{\R^p}$ is the standard retraction on $\R^p$ given in Equation \eqref{eq-def-retraction-euclidean}, we have that $R_{(U, \mathbf{x}, V)}(C^r_{(U, \mathbf{x}, V)}) \subset C^r$.
Since $\mathbf{x} \in \overline{B}_r(\mathbf{0})$, we have that $\|\mathbf{x}\| = \|\mathbf{0} - (-\mathbf{x})\| \leq r$ and $\mathbf{0} \in \overline{B}_r(-\mathbf{x})$.
Denote the zero vector in $T_U \St(m,p)$ by $\mathbf{0}_U$ and the zero vector in $T_V \St(n,p)$ by $\mathbf{0}_V$. We conclude that $(\mathbf{0}_U, \mathbf{0}, \mathbf{0}_V) \in C^r_{(U, \mathbf{x}, V)}$.

By the Inverse Function Theorem, there exists a open neigborhood $W_U$ of $\mathbf{0}_U$ on $T_U \St(m,p)$ such that $R_U^{\St(m,p)}|_{W_U}: W_U \rightarrow R_U^{St(m,p)}(W_U)$ is a homeomorphism,
where $R^{St(m,p)}$ is a retraction on Stiefel manifolds.
Similarly, there exists a open neigborhood $W_V$ of $\mathbf{0}_V$ on $T_V \St(n,p)$  such that $R_V^{\St(n,p)}|_{W_V}: W_V \rightarrow R_V^{St(n,p)}(W_V)$ is a homeomorphism,
where $R^{St(n,p)}$ is a retraction on Stiefel manifolds.
Define $W_{(U, \mathbf{x}, V)} = W_U \times \overline{B}_r(-\mathbf{x}) \times W_V$,
which is an open neighborhood of $(\mathbf{0}_U, \mathbf{0}, \mathbf{0}_V)$ in $C^r_{(U, \mathbf{x}, V)}$.
$R_{(U, \mathbf{x}, V)}|_{W_{(U, \mathbf{x}, V)}} = R_U^{\St(m,p)}|_{W_U} \times R_{\mathbf{x}}^{\R^p}|_{\overline{B}_r(-\mathbf{x})} \times R_V^{\St(n,p)}|_{W_V}$ is a homeomorphism.
Therefore, $\{C^r_{(U, \mathbf{x}, V)}\}_{(U, \mathbf{x}, V) \in C^r}$ is locally surjective.

Given a sequence $\{\mathbf{x}_k\}_{k=1}^\infty \subset \overline{B}_r(\mathbf{0})$ and a sequence $\{\mathbf{y}_k\}_{k=1}^\infty \subset \R^p$
that satisfy $\lim_{k \rightarrow \infty} \mathbf{x}_k \rightarrow \tilde{\mathbf{x}} \in \overline{B}_r(\mathbf{0})$ and $\lim_{k \rightarrow \infty} \mathbf{y}_k \rightarrow \tilde{\mathbf{y}} \in \R^p$,
by Proposition \ref{prop-convex-projection}(3) and Equation \eqref{eq-projection-opposite-ball}, we have
\begin{align*}
	\lim_{k \rightarrow \infty} \Pi_{\overline{B}_r(-\mathbf{x}_k)}(\mathbf{y}_k) & = \lim_{k \rightarrow \infty} \left(-\mathbf{x}_k + \Pi_{\overline{B}_r(\mathbf{0})}(\mathbf{x}_k + \mathbf{y}_k)\right) \\
	&= \lim_{k \rightarrow \infty} -\mathbf{x}_k + \lim_{k \rightarrow \infty} \Pi_{\overline{B}_r(\mathbf{0})}(\mathbf{x}_k + \mathbf{y}_k) \\
	&= -\tilde{\mathbf{x}} + \Pi_{\overline{B}_r(\mathbf{0})}(\tilde{\mathbf{x}} + \tilde{\mathbf{y}}) \\
	&= \Pi_{\overline{B}_r(-\tilde{\mathbf{x}})}(\tilde{\mathbf{y}}).
\end{align*}

Therefore, given a sequence $\{(U_k, \mathbf{x}_k, V_k)\}_{k=1}^\infty \in C^r$ and a sequence $\{(Y_k, \mathbf{y}_k, Z_k) \in T_{U_k} \St(m,p) \times \R^p \times T_{V_k} \St(n,p)\}_{k=1}^\infty$ satisfying
\[
	\lim_{k \rightarrow \infty} (U_k, \mathbf{x}_k, V_k) \rightarrow (\tilde{U}, \tilde{\mathbf{x}}, \tilde{V}) \in C^r \text{ and } \lim_{k \rightarrow \infty} (Y_k, \mathbf{y}_k, Z_k) \rightarrow (\tilde{Y}, \tilde{\mathbf{y}}, \tilde{Z}) \in T_{\tilde{U}} \St(m,p) \times \R^p \times T_{\tilde{V}} \St(n,p),
\]
by Equation \eqref{eq-proj-prod-mfd}, we have
\begin{align*}
\lim_{k \rightarrow \infty} \Pi_{C^r_{(U_k, \mathbf{x}_k, V_k)}}(Y_k, \mathbf{y}_k, Z_k) & = (\lim_{k \rightarrow \infty} \id_{T_{U_k} \St(m,p)}(Y_k), \lim_{k \rightarrow \infty}\Pi_{\overline{B}_r(-\mathbf{x}_k)}(\mathbf{y}_k), \lim_{k \rightarrow \infty} \id_{T_{V_k} \St(n,p)}(Z_k)) \\
&= (\lim_{k \rightarrow \infty} Y_k, \lim_{k \rightarrow \infty}\Pi_{\overline{B}_r(-\mathbf{x}_k)}(\mathbf{y}_k), \lim_{k \rightarrow \infty} Z_k) \\
&= (\tilde{Y}, \Pi_{\overline{B}_r(-\tilde{\mathbf{x}})}(\tilde{\mathbf{y}}), \tilde{Z})
= \Pi_{C^r_{(\tilde{U}, \tilde{\mathbf{x}}, \tilde{V})}}(\tilde{Y}, \tilde{\mathbf{y}}, \tilde{Z})
\in C^r_{(\tilde{U}, \tilde{\mathbf{x}}, \tilde{V})}.
\end{align*}

Thus, $\{C^r_{(U, \mathbf{x}, V)}\}_{(U, \mathbf{x}, V) \in C^r}$ is projectively continuous.
\end{proof}

\begin{proof}[Proof of Lemma \ref{lemma-relation-bw-reg-constraint}]
If $(U^*, \mathbf{x}^*, V^*) \in \M$ is a solution to Problem \ref{prob-wlra-reg-mfd}, then we have
\[
	h(U^* D_p(\mathbf{x}^*) V^{*{^T}}) + \lambda \|\mathbf{x}^*\|^2 \leq h(U^* D_p(\mathbf{0}) V^{*{^T}}) = \|\sqrt[\bigodot]{W} \odot A\|^2_F.
\]
Therefore, we conclude that
\[
	\|\mathbf{x}^*\| \leq \frac{\|\sqrt[\bigodot]{W} \odot A\|_F}{\sqrt{\lambda}}.
\]
That is, $(U^*, \mathbf{x}^*, V^*) \in C^r$ with $r = \frac{\|\sqrt[\bigodot]{W} \odot A\|_F}{\sqrt{\lambda}}$.
\end{proof}

\clearpage

\section{Supplementary Numerical Results and Discussion}
\label{sec-numerical-supp}

In this appendix, we present more numerical validation for the convergence results in Proposition \ref{prop-rgpa-convergence}.
Following Section \ref{sec-experiments}, we refer to the main routines from applying Algorithm \ref{algorithm-rgpa} to Problem \ref{prob-wlra-constrained}
with the Armijo Stepsize Rules \ref{ur-Armijo-feasible} and \ref{ur-Armijo-projection-arc}  as ``Feasible Direction'' and ``Projection Arc'', respectively.
We refer to the benchmark routine from applying the Riemannian gradient descent algorithm to Problem \ref{prob-wlra-reg-mfd} with the Armijo stepsize rule as ``Regularized Armijo''.

\subsection{Our Design of The Grid Search Experiments}
\label{sec-grid-search-design}

Table \ref{tab:armijo-grid-search-design} summarizes the common parameter design for the grid search experiments reported in Section \ref{sec-experiments} and this appendix.

\begin{table}[!htbp]
\centering
\small
\begin{tabular}{lll}
\hline
Category & Parameter & Values or setting \\
\hline
Armijo rule & Contraction factor $\beta$ & $\{0.5,0.75,0.9\}$ \\
            & Sufficient-decrease factor $\sigma$ & $\{0.25,0.5,0.75\}$ \\
            & Initial trial step $s$ & $\{0.3,0.5,1.0\}$ \\
\hline
Configuration & Retraction & QR, polar \\
              & Low-rank $p$ & $\{32,64,128\}$ \\
              & Regularization parameter $\lambda$ & $\{10^{-2},10^{-4},10^{-6}\}$ \\
\hline
Fixed setting & Iterations per run & $200$ \\
              & Maximum backtracking steps & $200$ \\
              & Armijo tolerance & $10^{-12}$ \\
\hline
\end{tabular}
\caption{Parameters used in the Armijo grid searches.
Each combination of the Armijo rule parameters $\beta$, $\sigma$ and $s$ defines one Armijo grid point, giving $27$ points for each routine within a retraction-rank-regularization configuration.
There are 18 retraction-rank-regularization configurations per dataset.
With the three routines, each dataset therefore has $27 \times 18 \times 3 = 1458$ runs in each grid search.
The Armijo tolerance is a numerical slack in the Armijo stepsize rules computations.}
\label{tab:armijo-grid-search-design}
\end{table}

\FloatBarrier

\subsection{More Two-Stage Experiments on the Sampled Datasets}
\label{sec-two-stage-sampled}
We repeated the two-stage experiments on the same sampled MNIST and CIFAR-10 datasets with masking rates $0.25$ and $0.75$.

\subsubsection{Masking Rate $= 0.75$}
\label{sec-sampled-mnist-cifar-mask-rate-0.75}
We present the numerical results from the experiments with the masking rate set to $0.75$.
Table \ref{tab:mnist-cifar-mask075-armijo-grid} summarizes short-run grid-search results.
Figure \ref{fig:mnist-cifar-mask075-armijo-rmse-curves} presents the long-run RMSE convergence curves for all the configurations.
Combining the results from both stages, we observe that our main routines
could outperform the benchmark routine over both short and long horizons
when we use the QR-decomposition-based retraction and the low-rank $p = 64$ on the sampled MNIST dataset,
or when we use the polar-decomposition-based retraction and the low-rank $p = 64$ or $p = 128$ on the sampled CIFAR-10 dataset.

\begin{table}[!htbp]
\centering
\small
\renewcommand{\arraystretch}{0.95}
\setlength{\tabcolsep}{0.35mm}
\begin{tabular}{@{}cc|rrrr|rrrr|rrrr@{}}
\hline
\multicolumn{14}{c}{\textbf{Sampled MNIST}} \\
\hline
\multicolumn{14}{c}{\textbf{QR retraction}} \\
\hline
rank & $\lambda$ & \multicolumn{4}{c|}{\shortstack{Regularized Armijo\\(Benchmark)}} & \multicolumn{4}{c|}{\shortstack{Feasible Direction\\(Main Routine)}} & \multicolumn{4}{c}{\shortstack{Projection Arc\\(Main Routine)}} \\
 & & best & mean & evals & fail. & best & mean & evals & fail. & best & mean & evals & fail. \\
\hline
32 & $10^{-2}$ & \textbf{0.2734 $(0.9,0.75,0.5)$} & 0.2786 & 43.6 & 0 & 0.2735 $(0.9,0.75,0.3)$ & 0.2786 & 43.5 & 0 & 0.2735 $(0.9,0.75,0.5)$ & 0.2785 & 43.6 & 0 \\
32 & $10^{-4}$ & 0.2745 $(0.9,0.75,0.3)$ & 0.2786 & 43.6 & 0 & \textbf{0.2735 $(0.9,0.75,0.3)$} & 0.2786 & 43.5 & 0 & 0.2735 $(0.9,0.75,0.5)$ & 0.2785 & 43.6 & 0 \\
32 & $10^{-6}$ & \textbf{0.2729 $(0.9,0.75,0.3)$} & 0.2786 & 43.6 & 0 & 0.2735 $(0.9,0.75,0.3)$ & 0.2786 & 43.5 & 0 & 0.2735 $(0.9,0.75,0.5)$ & 0.2785 & 43.6 & 0 \\
64 & $10^{-2}$ & 0.2862 $(0.9,0.75,1)$ & 0.2904 & 42.9 & 0 & \textbf{0.2831 $(0.9,0.75,0.5)$} & 0.2902 & 42.9 & 0 & 0.2872 $(0.9,0.75,0.3)$ & 0.2904 & 42.9 & 0 \\
64 & $10^{-4}$ & 0.2862 $(0.9,0.75,1)$ & 0.2904 & 42.9 & 0 & \textbf{0.2831 $(0.9,0.75,0.5)$} & 0.2902 & 42.9 & 0 & 0.2872 $(0.9,0.75,0.3)$ & 0.2904 & 42.9 & 0 \\
64 & $10^{-6}$ & 0.2864 $(0.9,0.75,0.3)$ & 0.2904 & 42.9 & 0 & \textbf{0.2831 $(0.9,0.75,0.5)$} & 0.2902 & 42.9 & 0 & 0.2872 $(0.9,0.75,0.3)$ & 0.2904 & 42.9 & 0 \\
128 & $10^{-2}$ & \textbf{0.3080 $(0.9,0.5,1)$} & 0.3102 & 41.9 & 0 & 0.3084 $(0.9,0.5,1)$ & 0.3102 & 41.9 & 0 & 0.3082 $(0.9,0.75,0.3)$ & 0.3102 & 42.0 & 0 \\
128 & $10^{-4}$ & 0.3085 $(0.9,0.75,0.3)$ & 0.3103 & 42.0 & 0 & 0.3084 $(0.9,0.5,1)$ & 0.3102 & 41.9 & 0 & \textbf{0.3082 $(0.9,0.75,0.3)$} & 0.3102 & 42.0 & 0 \\
128 & $10^{-6}$ & 0.3084 $(0.9,0.5,1)$ & 0.3102 & 41.9 & 0 & 0.3084 $(0.9,0.5,1)$ & 0.3102 & 41.9 & 0 & \textbf{0.3082 $(0.9,0.75,0.3)$} & 0.3102 & 42.0 & 0 \\
\hline
\multicolumn{14}{c}{\textbf{POLAR retraction}} \\
\hline
rank & $\lambda$ & \multicolumn{4}{c|}{\shortstack{Regularized Armijo\\(Benchmark)}} & \multicolumn{4}{c|}{\shortstack{Feasible Direction\\(Main Routine)}} & \multicolumn{4}{c}{\shortstack{Projection Arc\\(Main Routine)}} \\
 & & best & mean & evals & fail. & best & mean & evals & fail. & best & mean & evals & fail. \\
\hline
32 & $10^{-2}$ & 0.2821 $(0.5,0.5,0.5)$ & 0.2836 & 51.0 & 9 & 0.2802 $(0.75,0.5,1)$ & 0.2827 & 51.0 & 8 & \textbf{0.2770 $(0.9,0.75,0.3)$} & 0.2826 & 69.6 & 8 \\
32 & $10^{-4}$ & 0.2803 $(0.75,0.25,0.5)$ & 0.2828 & 47.2 & 9 & 0.2802 $(0.75,0.5,1)$ & 0.2827 & 51.0 & 8 & \textbf{0.2770 $(0.9,0.75,0.3)$} & 0.2826 & 69.6 & 8 \\
32 & $10^{-6}$ & 0.2801 $(0.75,0.5,1)$ & 0.2828 & 47.0 & 9 & 0.2802 $(0.75,0.5,1)$ & 0.2827 & 51.0 & 8 & \textbf{0.2770 $(0.9,0.75,0.3)$} & 0.2826 & 69.6 & 8 \\
64 & $10^{-2}$ & 0.2886 $(0.9,0.5,1)$ & 0.2926 & 61.4 & 2 & \textbf{0.2852 $(0.9,0.75,0.3)$} & 0.2923 & 57.2 & 3 & 0.2887 $(0.9,0.5,1)$ & 0.2927 & 64.9 & 2 \\
64 & $10^{-4}$ & 0.2904 $(0.9,0.75,0.5)$ & 0.2928 & 60.6 & 3 & \textbf{0.2852 $(0.9,0.75,0.3)$} & 0.2923 & 57.2 & 3 & 0.2887 $(0.9,0.5,1)$ & 0.2927 & 64.9 & 2 \\
64 & $10^{-6}$ & 0.2907 $(0.75,0.25,1)$ & 0.2929 & 64.0 & 1 & \textbf{0.2852 $(0.9,0.75,0.3)$} & 0.2923 & 57.2 & 3 & 0.2887 $(0.9,0.5,1)$ & 0.2927 & 64.9 & 2 \\
128 & $10^{-2}$ & \textbf{0.3070 $(0.9,0.5,1)$} & 0.3102 & 44.0 & 0 & 0.3080 $(0.9,0.75,0.5)$ & 0.3103 & 43.5 & 0 & 0.3078 $(0.9,0.5,1)$ & 0.3103 & 44.8 & 0 \\
128 & $10^{-4}$ & \textbf{0.3069 $(0.9,0.5,1)$} & 0.3102 & 44.8 & 0 & 0.3080 $(0.9,0.75,0.5)$ & 0.3103 & 43.5 & 0 & 0.3078 $(0.9,0.5,1)$ & 0.3103 & 44.8 & 0 \\
128 & $10^{-6}$ & 0.3080 $(0.9,0.75,1)$ & 0.3102 & 41.5 & 1 & 0.3080 $(0.9,0.75,0.5)$ & 0.3103 & 43.5 & 0 & \textbf{0.3078 $(0.9,0.5,1)$} & 0.3103 & 44.8 & 0 \\
\hline
\multicolumn{14}{c}{\textbf{Sampled CIFAR-10}} \\
\hline
\multicolumn{14}{c}{\textbf{QR retraction}} \\
\hline
rank & $\lambda$ & \multicolumn{4}{c|}{\shortstack{Regularized Armijo\\(Benchmark)}} & \multicolumn{4}{c|}{\shortstack{Feasible Direction\\(Main Routine)}} & \multicolumn{4}{c}{\shortstack{Projection Arc\\(Main Routine)}} \\
 & & best & mean & evals & fail. & best & mean & evals & fail. & best & mean & evals & fail. \\
\hline
32 & $10^{-2}$ & 0.4143 $(0.9,0.75,1)$ & 0.4183 & 51.7 & 0 & 0.4164 $(0.9,0.75,0.5)$ & 0.4185 & 51.6 & 0 & \textbf{0.4142 $(0.9,0.75,1)$} & 0.4183 & 51.6 & 0 \\
32 & $10^{-4}$ & 0.4147 $(0.9,0.75,1)$ & 0.4183 & 51.7 & 0 & 0.4164 $(0.9,0.75,0.5)$ & 0.4185 & 51.6 & 0 & \textbf{0.4142 $(0.9,0.75,1)$} & 0.4183 & 51.6 & 0 \\
32 & $10^{-6}$ & 0.4158 $(0.9,0.75,1)$ & 0.4184 & 51.7 & 0 & 0.4164 $(0.9,0.75,0.5)$ & 0.4185 & 51.6 & 0 & \textbf{0.4142 $(0.9,0.75,1)$} & 0.4183 & 51.6 & 0 \\
64 & $10^{-2}$ & 0.4269 $(0.9,0.75,0.3)$ & 0.4291 & 51.3 & 0 & \textbf{0.4259 $(0.9,0.75,0.3)$} & 0.4290 & 51.4 & 0 & 0.4260 $(0.9,0.75,0.5)$ & 0.4289 & 51.4 & 0 \\
64 & $10^{-4}$ & 0.4266 $(0.9,0.75,0.3)$ & 0.4291 & 51.4 & 0 & \textbf{0.4259 $(0.9,0.75,0.3)$} & 0.4290 & 51.4 & 0 & 0.4260 $(0.9,0.75,0.5)$ & 0.4289 & 51.4 & 0 \\
64 & $10^{-6}$ & \textbf{0.4228 $(0.9,0.75,0.3)$} & 0.4289 & 51.3 & 0 & 0.4259 $(0.9,0.75,0.3)$ & 0.4290 & 51.4 & 0 & 0.4260 $(0.9,0.75,0.5)$ & 0.4289 & 51.4 & 0 \\
128 & $10^{-2}$ & 0.4462 $(0.9,0.75,0.3)$ & 0.4478 & 50.8 & 0 & \textbf{0.4461 $(0.9,0.75,0.3)$} & 0.4478 & 50.9 & 0 & 0.4463 $(0.9,0.75,0.5)$ & 0.4478 & 50.8 & 0 \\
128 & $10^{-4}$ & \textbf{0.4459 $(0.9,0.75,0.3)$} & 0.4478 & 50.8 & 0 & 0.4461 $(0.9,0.75,0.3)$ & 0.4478 & 50.9 & 0 & 0.4463 $(0.9,0.75,0.5)$ & 0.4478 & 50.8 & 0 \\
128 & $10^{-6}$ & \textbf{0.4457 $(0.9,0.75,0.3)$} & 0.4478 & 50.8 & 0 & 0.4461 $(0.9,0.75,0.3)$ & 0.4478 & 50.9 & 0 & 0.4463 $(0.9,0.75,0.5)$ & 0.4478 & 50.8 & 0 \\
\hline
\multicolumn{14}{c}{\textbf{POLAR retraction}} \\
\hline
rank & $\lambda$ & \multicolumn{4}{c|}{\shortstack{Regularized Armijo\\(Benchmark)}} & \multicolumn{4}{c|}{\shortstack{Feasible Direction\\(Main Routine)}} & \multicolumn{4}{c}{\shortstack{Projection Arc\\(Main Routine)}} \\
 & & best & mean & evals & fail. & best & mean & evals & fail. & best & mean & evals & fail. \\
\hline
32 & $10^{-2}$ & \textbf{0.4191 $(0.5,0.25,0.3)$} & 0.4207 & 76.4 & 9 & 0.4194 $(0.5,0.25,0.3)$ & 0.4207 & 79.5 & 9 & 0.4194 $(0.5,0.25,0.3)$ & 0.4208 & 79.5 & 9 \\
32 & $10^{-4}$ & \textbf{0.4192 $(0.75,0.25,0.5)$} & 0.4207 & 78.8 & 9 & 0.4194 $(0.5,0.25,0.3)$ & 0.4207 & 79.5 & 9 & 0.4194 $(0.5,0.25,0.3)$ & 0.4208 & 79.5 & 9 \\
32 & $10^{-6}$ & \textbf{0.4191 $(0.5,0.25,0.3)$} & 0.4207 & 79.7 & 9 & 0.4194 $(0.5,0.25,0.3)$ & 0.4207 & 79.5 & 9 & 0.4194 $(0.5,0.25,0.3)$ & 0.4208 & 79.5 & 9 \\
64 & $10^{-2}$ & 0.4296 $(0.75,0.25,0.5)$ & 0.4312 & 73.8 & 9 & 0.4296 $(0.75,0.25,0.5)$ & 0.4313 & 71.4 & 9 & \textbf{0.4296 $(0.75,0.25,0.5)$} & 0.4314 & 80.6 & 9 \\
64 & $10^{-4}$ & 0.4297 $(0.75,0.25,0.5)$ & 0.4313 & 72.2 & 9 & 0.4296 $(0.75,0.25,0.5)$ & 0.4313 & 71.4 & 9 & \textbf{0.4296 $(0.75,0.25,0.5)$} & 0.4314 & 80.6 & 9 \\
64 & $10^{-6}$ & 0.4296 $(0.75,0.25,0.5)$ & 0.4313 & 72.3 & 9 & 0.4296 $(0.75,0.25,0.5)$ & 0.4313 & 71.4 & 9 & \textbf{0.4296 $(0.75,0.25,0.5)$} & 0.4314 & 80.6 & 9 \\
128 & $10^{-2}$ & 0.4468 $(0.75,0.25,1)$ & 0.4495 & 61.4 & 9 & \textbf{0.4467 $(0.75,0.25,1)$} & 0.4494 & 60.0 & 9 & 0.4467 $(0.75,0.25,1)$ & 0.4494 & 77.4 & 9 \\
128 & $10^{-4}$ & 0.4475 $(0.75,0.25,1)$ & 0.4497 & 56.8 & 9 & \textbf{0.4467 $(0.75,0.25,1)$} & 0.4494 & 60.0 & 9 & 0.4467 $(0.75,0.25,1)$ & 0.4494 & 77.4 & 9 \\
128 & $10^{-6}$ & 0.4475 $(0.75,0.25,1)$ & 0.4495 & 61.0 & 9 & \textbf{0.4467 $(0.75,0.25,1)$} & 0.4494 & 60.0 & 9 & 0.4467 $(0.75,0.25,1)$ & 0.4494 & 77.4 & 9 \\
\hline
\end{tabular}

\caption{For each retraction-rank-regularization configuration,
the table reports each routine's best and mean final RMSE, mean Armijo inequality checks per iteration (``mean Armijo Evaluations''), and failure count.
Evaluation counts are averaged over each successful run's $200$ iterations, then across successful grid runs.
Failures count grid runs without a successful final metric.
Failed records are excluded from evaluating the other three metrics.
Parentheses give the Armijo parameters $(\beta,\sigma,s)$ for the best RMSE entry.
Boldface marks the lowest best RMSE among the three routines in each row.
The upper table reports results for the sampled MNIST dataset, and the lower table reports results for the sampled CIFAR-10 dataset.
The masking rate is set to 0.75.}
\label{tab:mnist-cifar-mask075-armijo-grid}
\end{table}

\begin{figure}[!htbp]
\centering
\includegraphics[width=\textwidth]{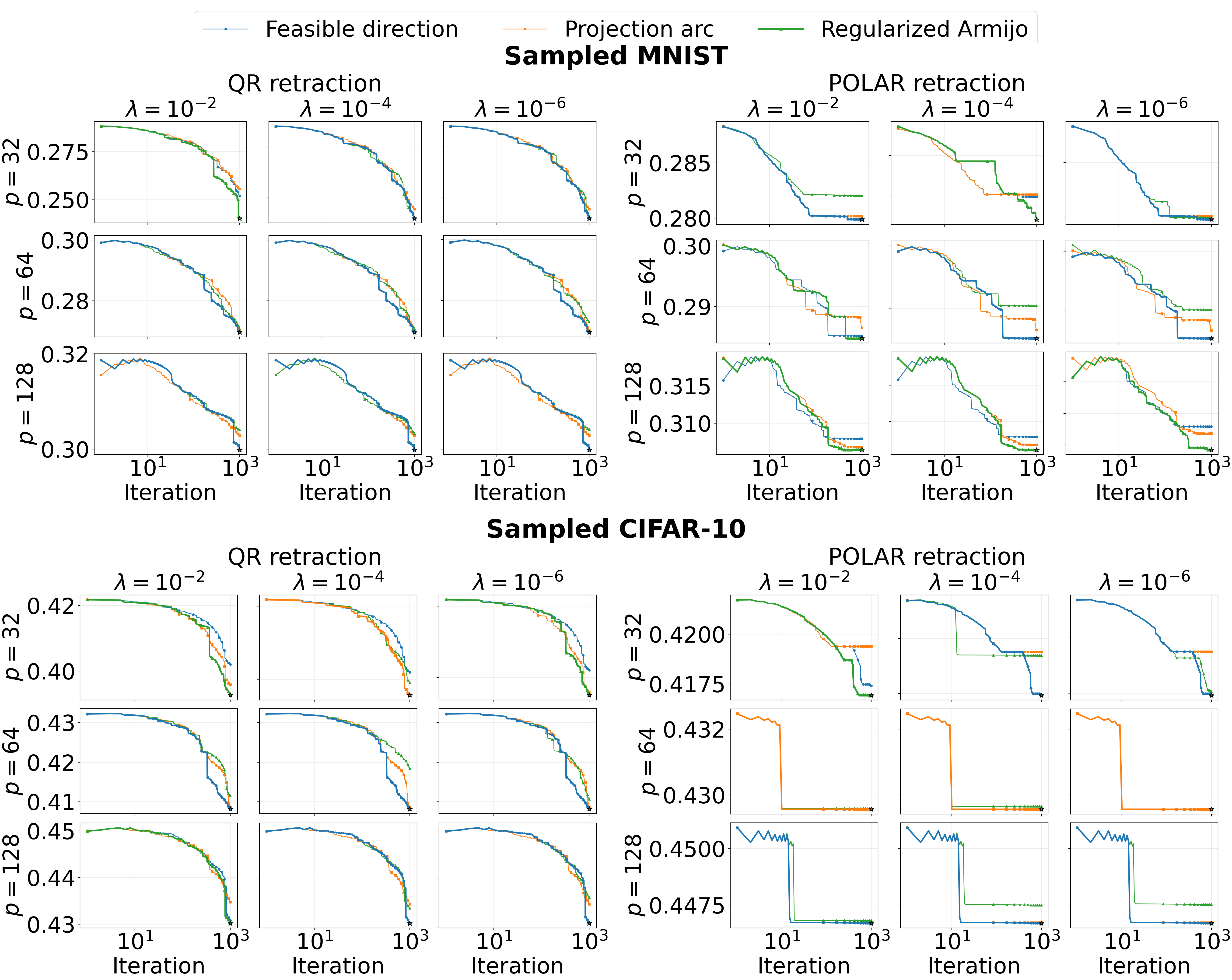}
\caption{RMSE convergence over $1000$ iterations using the optimal Armijo-rule parameters selected from the grid search summarized in Table \ref{tab:mnist-cifar-mask075-armijo-grid}.
    The left panels use the QR-decomposition-based retraction and the right panels use the polar-decomposition-based retraction.
    The $x$-axis is logarithmic. The thicker curve with a star at its final point marks the lowest final RMSE in that subplot.
    The masking rate is set to 0.75. On the sampled MNIST dataset (the upper panel),
    the ``Feasible Direction'' routine appears to consistently outperform the Regularized Armijo (benchmark) routine when using the QR-decomposition-based retraction and the low-rank $p = 64$ or $p = 128$.
    On the sampled CIFAR-10 dataset (the lower panel), (1) the ``Feasible Direction'' routine appears to consistently outperform the Regularized Armijo (benchmark) routine
    when using the QR-decomposition-based retraction and the low-rank $p = 64$,
    or when using the polar-decomposition-based retraction and the low-rank $p = 128$;
    (2) the ``Projection Arc'' routine appears to consistently outperform the Regularized Armijo (benchmark) routine
    when using the polar-decomposition-based retraction and the low-rank $p = 64$.}
\label{fig:mnist-cifar-mask075-armijo-rmse-curves}
\end{figure}

\FloatBarrier

\subsubsection{Masking Rate $= 0.25$}
\label{sec-sampled-mnist-cifar-mask-rate-0.25}
We present the numerical results from the experiments with the masking rate set to $0.25$.
Table \ref{tab:mnist-cifar-mask025-armijo-grid} summarizes the short-run grid-search results.
Figure \ref{fig:mnist-cifar-mask025-armijo-rmse-curves} presents the long-run RMSE convergence curves for all the configurations.
Combining the results from both stages, we observe that our main routines
could outperform the benchmark routine over both short and long horizons
when we use the polar-decomposition-based retraction and the low-rank $p = 128$ on the sampled MNIST dataset,
although the best-performing main routine differs between the two stages.

\begin{table}[!htbp]
\centering
\small
\renewcommand{\arraystretch}{0.95}
\setlength{\tabcolsep}{0.35mm}
\begin{tabular}{@{}cc|rrrr|rrrr|rrrr@{}}
\hline
\multicolumn{14}{c}{\textbf{Sampled MNIST}} \\
\hline
\multicolumn{14}{c}{\textbf{QR retraction}} \\
\hline
rank & $\lambda$ & \multicolumn{4}{c|}{\shortstack{Regularized Armijo\\(Benchmark)}} & \multicolumn{4}{c|}{\shortstack{Feasible Direction\\(Main Routine)}} & \multicolumn{4}{c}{\shortstack{Projection Arc\\(Main Routine)}} \\
 & & best & mean & evals & fail. & best & mean & evals & fail. & best & mean & evals & fail. \\
\hline
32 & $10^{-2}$ & \textbf{0.1649 $(0.9,0.75,1)$} & 0.1654 & 54.2 & 0 & 0.1651 $(0.9,0.75,0.3)$ & 0.1654 & 54.3 & 0 & 0.1650 $(0.9,0.75,0.3)$ & 0.1654 & 54.3 & 0 \\
32 & $10^{-4}$ & 0.1651 $(0.9,0.75,0.5)$ & 0.1654 & 54.2 & 0 & 0.1651 $(0.9,0.75,0.3)$ & 0.1654 & 54.3 & 0 & \textbf{0.1650 $(0.9,0.75,0.3)$} & 0.1654 & 54.3 & 0 \\
32 & $10^{-6}$ & 0.1651 $(0.9,0.75,0.3)$ & 0.1654 & 54.2 & 0 & 0.1651 $(0.9,0.75,0.3)$ & 0.1654 & 54.3 & 0 & \textbf{0.1650 $(0.9,0.75,0.3)$} & 0.1654 & 54.3 & 0 \\
64 & $10^{-2}$ & 0.1493 $(0.9,0.75,1)$ & 0.1525 & 54.2 & 0 & \textbf{0.1492 $(0.9,0.75,1)$} & 0.1524 & 54.1 & 0 & 0.1493 $(0.9,0.75,1)$ & 0.1524 & 54.1 & 0 \\
64 & $10^{-4}$ & 0.1492 $(0.9,0.75,1)$ & 0.1525 & 54.2 & 0 & \textbf{0.1492 $(0.9,0.75,1)$} & 0.1524 & 54.1 & 0 & 0.1493 $(0.9,0.75,1)$ & 0.1524 & 54.1 & 0 \\
64 & $10^{-6}$ & 0.1493 $(0.9,0.75,1)$ & 0.1524 & 54.1 & 0 & \textbf{0.1492 $(0.9,0.75,1)$} & 0.1524 & 54.1 & 0 & 0.1493 $(0.9,0.75,1)$ & 0.1524 & 54.1 & 0 \\
128 & $10^{-2}$ & 0.2231 $(0.9,0.75,0.3)$ & 0.2293 & 53.7 & 0 & 0.2250 $(0.9,0.75,0.5)$ & 0.2294 & 53.7 & 0 & \textbf{0.2225 $(0.9,0.75,0.3)$} & 0.2292 & 53.7 & 0 \\
128 & $10^{-4}$ & 0.2233 $(0.9,0.75,0.5)$ & 0.2292 & 53.7 & 0 & 0.2250 $(0.9,0.75,0.5)$ & 0.2294 & 53.7 & 0 & \textbf{0.2225 $(0.9,0.75,0.3)$} & 0.2292 & 53.7 & 0 \\
128 & $10^{-6}$ & 0.2240 $(0.9,0.75,0.3)$ & 0.2293 & 53.7 & 0 & 0.2250 $(0.9,0.75,0.5)$ & 0.2294 & 53.7 & 0 & \textbf{0.2225 $(0.9,0.75,0.3)$} & 0.2292 & 53.7 & 0 \\
\hline
\multicolumn{14}{c}{\textbf{POLAR retraction}} \\
\hline
rank & $\lambda$ & \multicolumn{4}{c|}{\shortstack{Regularized Armijo\\(Benchmark)}} & \multicolumn{4}{c|}{\shortstack{Feasible Direction\\(Main Routine)}} & \multicolumn{4}{c}{\shortstack{Projection Arc\\(Main Routine)}} \\
 & & best & mean & evals & fail. & best & mean & evals & fail. & best & mean & evals & fail. \\
\hline
32 & $10^{-2}$ & \textbf{0.1653 $(0.5,0.75,0.3)$} & 0.1657 & 61.0 & 2 & 0.1653 $(0.75,0.75,0.3)$ & 0.1657 & 60.4 & 2 & 0.1654 $(0.75,0.5,0.3)$ & 0.1657 & 59.0 & 3 \\
32 & $10^{-4}$ & 0.1654 $(0.9,0.75,1)$ & 0.1658 & 61.8 & 2 & \textbf{0.1653 $(0.75,0.75,0.3)$} & 0.1657 & 60.4 & 2 & 0.1654 $(0.75,0.5,0.3)$ & 0.1657 & 59.0 & 3 \\
32 & $10^{-6}$ & 0.1655 $(0.5,0.75,0.5)$ & 0.1657 & 60.7 & 2 & \textbf{0.1653 $(0.75,0.75,0.3)$} & 0.1657 & 60.4 & 2 & 0.1654 $(0.75,0.5,0.3)$ & 0.1657 & 59.0 & 3 \\
64 & $10^{-2}$ & 0.1516 $(0.5,0.5,0.5)$ & 0.1544 & 64.0 & 0 & 0.1514 $(0.5,0.5,0.5)$ & 0.1544 & 63.7 & 0 & \textbf{0.1513 $(0.9,0.75,0.3)$} & 0.1542 & 60.2 & 1 \\
64 & $10^{-4}$ & \textbf{0.1507 $(0.9,0.75,0.3)$} & 0.1542 & 63.2 & 0 & 0.1514 $(0.5,0.5,0.5)$ & 0.1544 & 63.7 & 0 & 0.1513 $(0.9,0.75,0.3)$ & 0.1542 & 60.2 & 1 \\
64 & $10^{-6}$ & 0.1516 $(0.5,0.5,0.5)$ & 0.1543 & 64.5 & 0 & 0.1514 $(0.5,0.5,0.5)$ & 0.1544 & 63.7 & 0 & \textbf{0.1513 $(0.9,0.75,0.3)$} & 0.1542 & 60.2 & 1 \\
128 & $10^{-2}$ & 0.2282 $(0.75,0.5,0.3)$ & 0.2305 & 59.7 & 0 & \textbf{0.2258 $(0.9,0.75,1)$} & 0.2301 & 58.2 & 0 & 0.2262 $(0.9,0.75,1)$ & 0.2302 & 58.8 & 0 \\
128 & $10^{-4}$ & 0.2270 $(0.9,0.75,0.3)$ & 0.2303 & 59.9 & 0 & \textbf{0.2258 $(0.9,0.75,1)$} & 0.2301 & 58.2 & 0 & 0.2262 $(0.9,0.75,1)$ & 0.2302 & 58.8 & 0 \\
128 & $10^{-6}$ & 0.2269 $(0.9,0.75,0.3)$ & 0.2302 & 57.9 & 0 & \textbf{0.2258 $(0.9,0.75,1)$} & 0.2301 & 58.2 & 0 & 0.2262 $(0.9,0.75,1)$ & 0.2302 & 58.8 & 0 \\
\hline
\multicolumn{14}{c}{\textbf{Sampled CIFAR-10}} \\
\hline
\multicolumn{14}{c}{\textbf{QR retraction}} \\
\hline
rank & $\lambda$ & \multicolumn{4}{c|}{\shortstack{Regularized Armijo\\(Benchmark)}} & \multicolumn{4}{c|}{\shortstack{Feasible Direction\\(Main Routine)}} & \multicolumn{4}{c}{\shortstack{Projection Arc\\(Main Routine)}} \\
 & & best & mean & evals & fail. & best & mean & evals & fail. & best & mean & evals & fail. \\
\hline
32 & $10^{-2}$ & 0.1704 $(0.75,0.5,0.3)$ & 0.1740 & 62.3 & 0 & 0.1704 $(0.75,0.5,0.3)$ & 0.1743 & 57.3 & 2 & \textbf{0.1702 $(0.75,0.5,0.3)$} & 0.1741 & 62.3 & 0 \\
32 & $10^{-4}$ & 0.1704 $(0.75,0.5,0.3)$ & 0.1742 & 60.1 & 1 & 0.1704 $(0.75,0.5,0.3)$ & 0.1743 & 57.3 & 2 & \textbf{0.1702 $(0.75,0.5,0.3)$} & 0.1741 & 62.3 & 0 \\
32 & $10^{-6}$ & 0.1705 $(0.75,0.5,0.3)$ & 0.1742 & 60.0 & 1 & 0.1704 $(0.75,0.5,0.3)$ & 0.1743 & 57.3 & 2 & \textbf{0.1702 $(0.75,0.5,0.3)$} & 0.1741 & 62.3 & 0 \\
64 & $10^{-2}$ & 0.1759 $(0.9,0.75,0.3)$ & 0.1886 & 62.3 & 0 & \textbf{0.1756 $(0.9,0.75,0.3)$} & 0.1885 & 62.2 & 0 & 0.1763 $(0.9,0.75,0.3)$ & 0.1886 & 62.3 & 0 \\
64 & $10^{-4}$ & 0.1761 $(0.9,0.75,0.3)$ & 0.1885 & 62.3 & 0 & \textbf{0.1756 $(0.9,0.75,0.3)$} & 0.1885 & 62.2 & 0 & 0.1763 $(0.9,0.75,0.3)$ & 0.1886 & 62.3 & 0 \\
64 & $10^{-6}$ & \textbf{0.1740 $(0.9,0.75,0.3)$} & 0.1884 & 62.2 & 0 & 0.1756 $(0.9,0.75,0.3)$ & 0.1885 & 62.2 & 0 & 0.1763 $(0.9,0.75,0.3)$ & 0.1886 & 62.3 & 0 \\
128 & $10^{-2}$ & 0.2293 $(0.9,0.75,0.3)$ & 0.2402 & 62.1 & 0 & \textbf{0.2286 $(0.9,0.75,0.3)$} & 0.2401 & 62.1 & 0 & 0.2287 $(0.9,0.75,0.3)$ & 0.2400 & 62.1 & 0 \\
128 & $10^{-4}$ & 0.2295 $(0.9,0.75,0.3)$ & 0.2402 & 62.1 & 0 & \textbf{0.2286 $(0.9,0.75,0.3)$} & 0.2401 & 62.1 & 0 & 0.2287 $(0.9,0.75,0.3)$ & 0.2400 & 62.1 & 0 \\
128 & $10^{-6}$ & \textbf{0.2280 $(0.9,0.75,0.3)$} & 0.2401 & 62.1 & 0 & 0.2286 $(0.9,0.75,0.3)$ & 0.2401 & 62.1 & 0 & 0.2287 $(0.9,0.75,0.3)$ & 0.2400 & 62.1 & 0 \\
\hline
\multicolumn{14}{c}{\textbf{POLAR retraction}} \\
\hline
rank & $\lambda$ & \multicolumn{4}{c|}{\shortstack{Regularized Armijo\\(Benchmark)}} & \multicolumn{4}{c|}{\shortstack{Feasible Direction\\(Main Routine)}} & \multicolumn{4}{c}{\shortstack{Projection Arc\\(Main Routine)}} \\
 & & best & mean & evals & fail. & best & mean & evals & fail. & best & mean & evals & fail. \\
\hline
32 & $10^{-2}$ & \textbf{0.1744 $(0.5,0.5,0.3)$} & 0.1766 & 51.8 & 6 & 0.1744 $(0.5,0.5,0.3)$ & 0.1766 & 51.4 & 6 & 0.1746 $(0.5,0.5,0.3)$ & 0.1767 & 57.2 & 6 \\
32 & $10^{-4}$ & 0.1748 $(0.5,0.5,0.3)$ & 0.1769 & 52.6 & 6 & \textbf{0.1744 $(0.5,0.5,0.3)$} & 0.1766 & 51.4 & 6 & 0.1746 $(0.5,0.5,0.3)$ & 0.1767 & 57.2 & 6 \\
32 & $10^{-6}$ & \textbf{0.1744 $(0.5,0.5,0.3)$} & 0.1766 & 51.4 & 6 & 0.1744 $(0.5,0.5,0.3)$ & 0.1766 & 51.4 & 6 & 0.1746 $(0.5,0.5,0.3)$ & 0.1767 & 57.2 & 6 \\
64 & $10^{-2}$ & 0.1908 $(0.5,0.5,0.3)$ & 0.1947 & 53.8 & 6 & 0.1880 $(0.75,0.5,1)$ & 0.1938 & 51.5 & 6 & \textbf{0.1848 $(0.75,0.75,0.3)$} & 0.1932 & 58.2 & 6 \\
64 & $10^{-4}$ & 0.1897 $(0.75,0.5,1)$ & 0.1945 & 54.6 & 6 & 0.1880 $(0.75,0.5,1)$ & 0.1938 & 51.5 & 6 & \textbf{0.1848 $(0.75,0.75,0.3)$} & 0.1932 & 58.2 & 6 \\
64 & $10^{-6}$ & 0.1903 $(0.75,0.75,1)$ & 0.1940 & 52.5 & 6 & 0.1880 $(0.75,0.5,1)$ & 0.1938 & 51.5 & 6 & \textbf{0.1848 $(0.75,0.75,0.3)$} & 0.1932 & 58.2 & 6 \\
128 & $10^{-2}$ & \textbf{0.2368 $(0.75,0.75,0.3)$} & 0.2431 & 50.4 & 6 & 0.2400 $(0.75,0.5,0.5)$ & 0.2438 & 53.9 & 6 & 0.2392 $(0.75,0.5,0.5)$ & 0.2437 & 58.7 & 6 \\
128 & $10^{-4}$ & 0.2407 $(0.75,0.75,1)$ & 0.2438 & 52.4 & 6 & 0.2400 $(0.75,0.5,0.5)$ & 0.2438 & 53.9 & 6 & \textbf{0.2392 $(0.75,0.5,0.5)$} & 0.2437 & 58.7 & 6 \\
128 & $10^{-6}$ & 0.2417 $(0.75,0.75,1)$ & 0.2442 & 54.1 & 6 & 0.2400 $(0.75,0.5,0.5)$ & 0.2438 & 53.9 & 6 & \textbf{0.2392 $(0.75,0.5,0.5)$} & 0.2437 & 58.7 & 6 \\
\hline
\end{tabular}

\caption{For each retraction-rank-regularization configuration,
the table reports each routine's best and mean final RMSE, mean Armijo inequality checks per iteration (``mean Armijo Evaluations''), and failure count.
Evaluation counts are averaged over each successful run's $200$ iterations, then across successful grid runs.
Failures count grid runs without a successful final metric.
Failed records are excluded from evaluating the other three metrics.
Parentheses give the Armijo parameters $(\beta,\sigma,s)$ for the best RMSE entry.
Boldface marks the lowest best RMSE among the three routines in each row.
The upper table reports results for the sampled MNIST dataset, and the lower table reports results for the sampled CIFAR-10 dataset.
The masking rate is set to 0.25.}
\label{tab:mnist-cifar-mask025-armijo-grid}
\end{table}

\begin{figure}[!htbp]
\centering
\includegraphics[width=\textwidth]{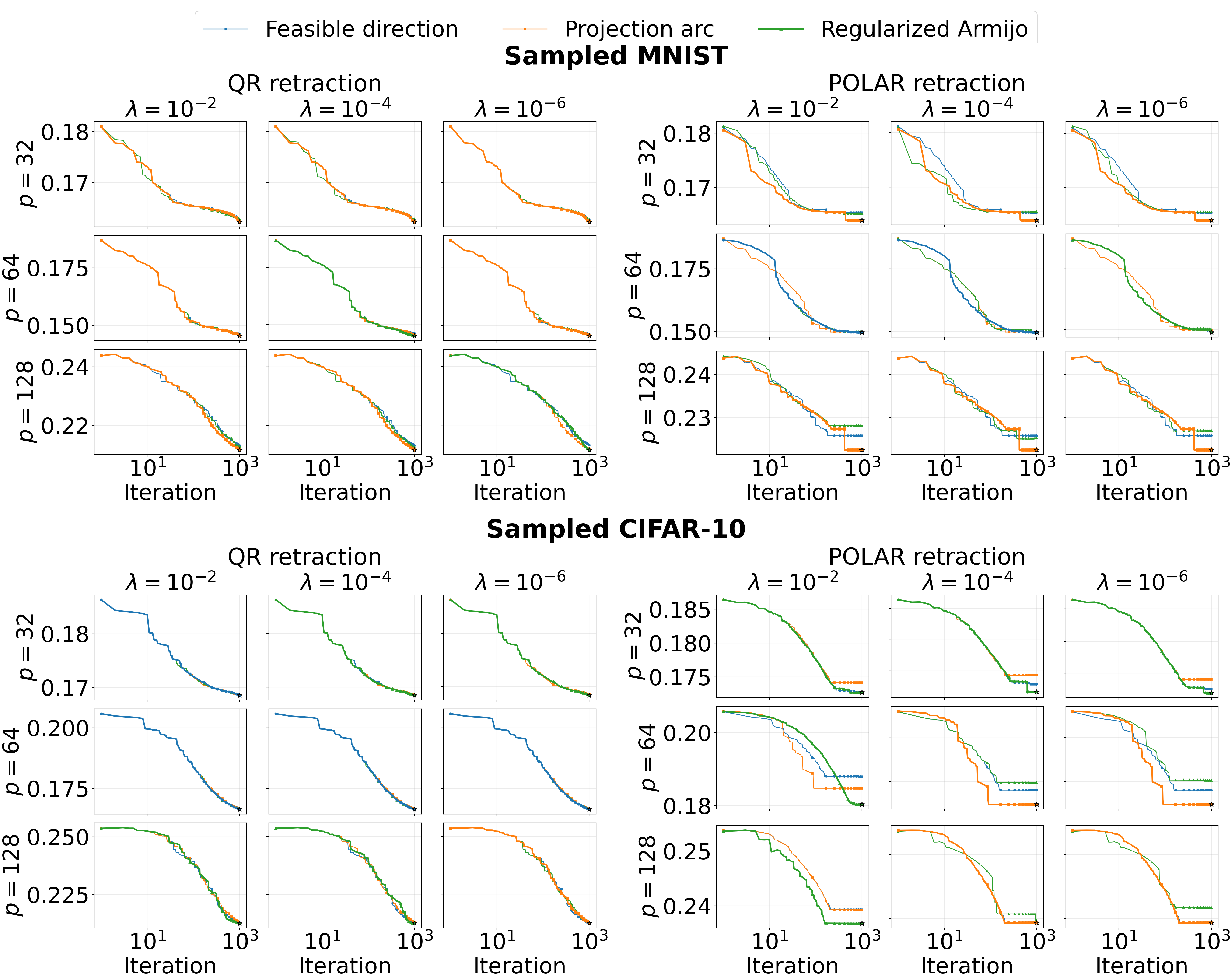}
\caption{RMSE convergence over $1000$ iterations using the optimal Armijo-rule parameters selected from the grid search summarized in Table \ref{tab:mnist-cifar-mask025-armijo-grid}.
    The left panels use the QR-decomposition-based retraction and the right panels use the polar-decomposition-based retraction.
    The $x$-axis is logarithmic. The thicker curve with a star at its final point marks the lowest final RMSE in that subplot.
    The masking rate is set to 0.25. On the sampled MNIST dataset (the upper panel),
    the ``Projection Arc'' routine appears to consistently outperform the Regularized Armijo (benchmark) routine
    when using the QR-decomposition-based retraction and the low-rank $p = 32$, or
    using the polar-decomposition-based retraction and the low-rank $p = 32$ or $p = 128$.
    On the sampled CIFAR-10 dataset (the lower panel),
    the ``Feasible Direction'' routine appears to consistently outperform the Regularized Armijo (benchmark) routine
    when we using the QR-decomposition-based retraction and the low-rank $p = 64$.}
\label{fig:mnist-cifar-mask025-armijo-rmse-curves}
\end{figure}

\FloatBarrier

\subsection{Grid Search Experiments on the Full Training Splits}
\label{sec-full-train-mnist-grid-search}
Tables \ref{tab:full-train-armijo-grid} and \ref{tab:full-train-cifar10-armijo-grid} summarize the Stage 1 grid searches
for the full MNIST and grayscale CIFAR-10 training splits, respectively.
Both use masking rate $0.5$ and the common 200-iteration grid design in Table \ref{tab:armijo-grid-search-design}.

\begin{table}[!htbp]
\centering
\small
\setlength{\tabcolsep}{1.2mm}
\begin{tabular}{@{}cc|rrr|rrr|rrr@{}}
\hline
\multicolumn{11}{c}{\textbf{Full-train MNIST}} \\
\hline
\multicolumn{11}{c}{\textbf{QR retraction}} \\
\hline
rank & $\lambda$ & \multicolumn{3}{c|}{\shortstack{Regularized Armijo\\(Benchmark)}} & \multicolumn{3}{c|}{\shortstack{Feasible Direction\\(Main Routine)}} & \multicolumn{3}{c}{\shortstack{Projection Arc\\(Main Routine)}} \\
 & & best & mean & fail. & best & mean & fail. & best & mean & fail. \\
\hline
32 & $10^{-2}$ & 0.2140 $(0.9,0.75,0.3)$ & 0.2143 & 0 & \textbf{0.2139 $(0.9,0.75,0.5)$} & 0.2143 & 0 & 0.2140 $(0.9,0.75,0.3)$ & 0.2143 & 0 \\
32 & $10^{-4}$ & 0.2139 $(0.9,0.75,0.5)$ & 0.2143 & 0 & \textbf{0.2139 $(0.9,0.75,0.5)$} & 0.2143 & 0 & 0.2140 $(0.9,0.75,0.3)$ & 0.2143 & 0 \\
32 & $10^{-6}$ & 0.2139 $(0.9,0.75,0.5)$ & 0.2143 & 0 & \textbf{0.2139 $(0.9,0.75,0.5)$} & 0.2143 & 0 & 0.2140 $(0.9,0.75,0.3)$ & 0.2143 & 0 \\
64 & $10^{-2}$ & 0.2230 $(0.9,0.75,1)$ & 0.2277 & 0 & \textbf{0.2226 $(0.9,0.75,0.5)$} & 0.2275 & 0 & 0.2232 $(0.9,0.75,0.5)$ & 0.2276 & 0 \\
64 & $10^{-4}$ & \textbf{0.2222 $(0.9,0.75,1)$} & 0.2276 & 0 & 0.2226 $(0.9,0.75,0.5)$ & 0.2275 & 0 & 0.2232 $(0.9,0.75,0.5)$ & 0.2276 & 0 \\
64 & $10^{-6}$ & \textbf{0.2224 $(0.9,0.75,1)$} & 0.2274 & 0 & 0.2226 $(0.9,0.75,0.5)$ & 0.2275 & 0 & 0.2232 $(0.9,0.75,0.5)$ & 0.2276 & 0 \\
128 & $10^{-2}$ & 0.2729 $(0.9,0.75,0.5)$ & 0.2781 & 0 & \textbf{0.2725 $(0.9,0.75,0.3)$} & 0.2779 & 0 & 0.2729 $(0.9,0.75,0.3)$ & 0.2779 & 0 \\
128 & $10^{-4}$ & \textbf{0.2725 $(0.9,0.75,0.5)$} & 0.2779 & 0 & 0.2725 $(0.9,0.75,0.3)$ & 0.2779 & 0 & 0.2729 $(0.9,0.75,0.3)$ & 0.2779 & 0 \\
128 & $10^{-6}$ & \textbf{0.2718 $(0.9,0.75,0.5)$} & 0.2779 & 0 & 0.2725 $(0.9,0.75,0.3)$ & 0.2779 & 0 & 0.2729 $(0.9,0.75,0.3)$ & 0.2779 & 0 \\
\hline
\multicolumn{11}{c}{\textbf{POLAR retraction}} \\
\hline
rank & $\lambda$ & \multicolumn{3}{c|}{\shortstack{Regularized Armijo\\(Benchmark)}} & \multicolumn{3}{c|}{\shortstack{Feasible Direction\\(Main Routine)}} & \multicolumn{3}{c}{\shortstack{Projection Arc\\(Main Routine)}} \\
 & & best & mean & fail. & best & mean & fail. & best & mean & fail. \\
\hline
32 & $10^{-2}$ & \textbf{0.2140 $(0.9,0.5,1)$} & 0.2148 & 3 & 0.2142 $(0.75,0.5,0.5)$ & 0.2148 & 3 & 0.2141 $(0.75,0.5,0.5)$ & 0.2148 & 4 \\
32 & $10^{-4}$ & 0.2142 $(0.75,0.5,0.5)$ & 0.2148 & 2 & 0.2142 $(0.75,0.5,0.5)$ & 0.2148 & 3 & \textbf{0.2141 $(0.75,0.5,0.5)$} & 0.2148 & 4 \\
32 & $10^{-6}$ & 0.2142 $(0.75,0.5,0.5)$ & 0.2148 & 3 & 0.2142 $(0.75,0.5,0.5)$ & 0.2148 & 3 & \textbf{0.2141 $(0.75,0.5,0.5)$} & 0.2148 & 4 \\
64 & $10^{-2}$ & \textbf{0.2247 $(0.75,0.5,0.5)$} & 0.2294 & 1 & 0.2255 $(0.5,0.5,1)$ & 0.2296 & 0 & 0.2255 $(0.5,0.5,0.5)$ & 0.2298 & 1 \\
64 & $10^{-4}$ & 0.2263 $(0.5,0.5,0.5)$ & 0.2304 & 1 & \textbf{0.2255 $(0.5,0.5,1)$} & 0.2296 & 0 & 0.2255 $(0.5,0.5,0.5)$ & 0.2298 & 1 \\
64 & $10^{-6}$ & \textbf{0.2255 $(0.5,0.5,0.5)$} & 0.2298 & 0 & 0.2255 $(0.5,0.5,1)$ & 0.2296 & 0 & 0.2255 $(0.5,0.5,0.5)$ & 0.2298 & 1 \\
128 & $10^{-2}$ & \textbf{0.2757 $(0.9,0.75,0.5)$} & 0.2805 & 1 & 0.2770 $(0.5,0.75,0.3)$ & 0.2809 & 1 & 0.2759 $(0.9,0.75,0.5)$ & 0.2805 & 1 \\
128 & $10^{-4}$ & 0.2774 $(0.5,0.25,0.3)$ & 0.2814 & 0 & 0.2770 $(0.5,0.75,0.3)$ & 0.2809 & 1 & \textbf{0.2759 $(0.9,0.75,0.5)$} & 0.2805 & 1 \\
128 & $10^{-6}$ & 0.2761 $(0.75,0.5,1)$ & 0.2807 & 1 & 0.2770 $(0.5,0.75,0.3)$ & 0.2809 & 1 & \textbf{0.2759 $(0.9,0.75,0.5)$} & 0.2805 & 1 \\
\hline
\end{tabular}

\caption{For each retraction-rank-regularization configuration,
the table reports each routine's best final RMSE (``best RMSE''), mean final RMSE (``mean RMSE''),
and number of failures (``failures'').
The number of failures is the count of grid points whose runs failed by reaching the maximum number of backtracking steps
(set to $200$) before producing a successful final metric.
Failed records are excluded from evaluating the other two metrics.
Parentheses give the Armijo parameters $(\beta,\sigma,s)$ for the best RMSE entry.
Boldface marks the lowest best RMSE among the three routines in each row.
The masking rate is set to 0.5.}
\label{tab:full-train-armijo-grid}
\end{table}

\clearpage

\begin{table}[!htbp]
\centering
\small
\setlength{\tabcolsep}{1.2mm}
\begin{tabular}{@{}cc|rrr|rrr|rrr@{}}
\hline
\multicolumn{11}{c}{\textbf{Full-train CIFAR-10}} \\
\hline
\multicolumn{11}{c}{\textbf{QR retraction}} \\
\hline
rank & $\lambda$ & \multicolumn{3}{c|}{\shortstack{Regularized Armijo\\(Benchmark)}} & \multicolumn{3}{c|}{\shortstack{Feasible Direction\\(Main Routine)}} & \multicolumn{3}{c}{\shortstack{Projection Arc\\(Main Routine)}} \\
 & & best & mean & fail. & best & mean & fail. & best & mean & fail. \\
\hline
32 & $10^{-2}$ & 0.2876 $(0.75,0.5,0.5)$ & 0.2891 & 4 & \textbf{0.2873 $(0.9,0.5,1)$} & 0.2889 & 1 & 0.2874 $(0.9,0.5,1)$ & 0.2890 & 2 \\
32 & $10^{-4}$ & 0.2874 $(0.9,0.5,1)$ & 0.2891 & 4 & \textbf{0.2873 $(0.9,0.5,1)$} & 0.2889 & 1 & 0.2874 $(0.9,0.5,1)$ & 0.2890 & 2 \\
32 & $10^{-6}$ & \textbf{0.2873 $(0.9,0.5,1)$} & 0.2890 & 2 & \textbf{0.2873 $(0.9,0.5,1)$} & 0.2889 & 1 & 0.2874 $(0.9,0.5,1)$ & 0.2890 & 2 \\
\hline
64 & $10^{-2}$ & 0.2968 $(0.75,0.5,0.5)$ & 0.2997 & 1 & 0.2967 $(0.75,0.5,0.5)$ & 0.2995 & 0 & \textbf{0.2966 $(0.9,0.75,1)$} & 0.2995 & 0 \\
64 & $10^{-4}$ & 0.2966 $(0.75,0.5,0.5)$ & 0.2995 & 0 & 0.2967 $(0.75,0.5,0.5)$ & 0.2995 & 0 & \textbf{0.2966 $(0.9,0.75,1)$} & 0.2995 & 0 \\
64 & $10^{-6}$ & \textbf{0.2966 $(0.75,0.5,0.5)$} & 0.2995 & 0 & 0.2967 $(0.75,0.5,0.5)$ & 0.2995 & 0 & 0.2966 $(0.9,0.75,1)$ & 0.2995 & 0 \\
\hline
128 & $10^{-2}$ & 0.3289 $(0.9,0.75,0.5)$ & 0.3305 & 0 & 0.3288 $(0.9,0.75,0.5)$ & 0.3305 & 0 & \textbf{0.3287 $(0.9,0.75,0.3)$} & 0.3305 & 0 \\
128 & $10^{-4}$ & \textbf{0.3285 $(0.9,0.75,0.5)$} & 0.3305 & 0 & 0.3288 $(0.9,0.75,0.5)$ & 0.3305 & 0 & 0.3287 $(0.9,0.75,0.3)$ & 0.3305 & 0 \\
128 & $10^{-6}$ & 0.3289 $(0.9,0.75,0.3)$ & 0.3305 & 0 & 0.3288 $(0.9,0.75,0.5)$ & 0.3305 & 0 & \textbf{0.3287 $(0.9,0.75,0.3)$} & 0.3305 & 0 \\
\hline
\multicolumn{11}{c}{\textbf{POLAR retraction}} \\
\hline
rank & $\lambda$ & \multicolumn{3}{c|}{\shortstack{Regularized Armijo\\(Benchmark)}} & \multicolumn{3}{c|}{\shortstack{Feasible Direction\\(Main Routine)}} & \multicolumn{3}{c}{\shortstack{Projection Arc\\(Main Routine)}} \\
 & & best & mean & fail. & best & mean & fail. & best & mean & fail. \\
\hline
32 & $10^{-2}$ & \textbf{0.2888 $(0.75,0.5,0.3)$} & 0.2911 & 6 & 0.2891 $(0.75,0.5,0.3)$ & 0.2909 & 6 & 0.2896 $(0.75,0.25,0.5)$ & 0.2914 & 6 \\
32 & $10^{-4}$ & 0.2896 $(0.75,0.5,0.3)$ & 0.2910 & 6 & \textbf{0.2891 $(0.75,0.5,0.3)$} & 0.2909 & 6 & 0.2896 $(0.75,0.25,0.5)$ & 0.2914 & 6 \\
32 & $10^{-6}$ & \textbf{0.2891 $(0.75,0.5,0.3)$} & 0.2909 & 6 & \textbf{0.2891 $(0.75,0.5,0.3)$} & 0.2909 & 6 & 0.2896 $(0.75,0.25,0.5)$ & 0.2914 & 6 \\
\hline
64 & $10^{-2}$ & 0.2997 $(0.75,0.5,1)$ & 0.3022 & 6 & \textbf{0.2995 $(0.75,0.5,1)$} & 0.3022 & 6 & 0.3006 $(0.75,0.5,1)$ & 0.3025 & 6 \\
64 & $10^{-4}$ & 0.2997 $(0.75,0.5,1)$ & 0.3021 & 6 & \textbf{0.2995 $(0.75,0.5,1)$} & 0.3022 & 6 & 0.3006 $(0.75,0.5,1)$ & 0.3025 & 6 \\
64 & $10^{-6}$ & 0.3001 $(0.75,0.5,1)$ & 0.3023 & 6 & \textbf{0.2995 $(0.75,0.5,1)$} & 0.3022 & 6 & 0.3006 $(0.75,0.5,1)$ & 0.3025 & 6 \\
\hline
128 & $10^{-2}$ & 0.3310 $(0.75,0.5,1)$ & 0.3324 & 6 & \textbf{0.3306 $(0.75,0.5,0.3)$} & 0.3324 & 6 & 0.3308 $(0.75,0.5,0.3)$ & 0.3325 & 6 \\
128 & $10^{-4}$ & \textbf{0.3292 $(0.75,0.5,0.3)$} & 0.3324 & 6 & 0.3306 $(0.75,0.5,0.3)$ & 0.3324 & 6 & 0.3308 $(0.75,0.5,0.3)$ & 0.3325 & 6 \\
128 & $10^{-6}$ & \textbf{0.3302 $(0.75,0.5,0.3)$} & 0.3323 & 6 & 0.3306 $(0.75,0.5,0.3)$ & 0.3324 & 6 & 0.3308 $(0.75,0.5,0.3)$ & 0.3325 & 6 \\
\hline
\end{tabular}

\caption{Stage 1 grid search on all $50000$ grayscale CIFAR-10 training images,
forming a $1024\times50000$ matrix with masking rate $0.5$.
For each retraction-rank-regularization configuration, the table reports each routine's best final RMSE,
mean final RMSE over successful grid points, and number of failures.
Each routine has $27$ grid points per configuration and runs for $200$ iterations.
Failures reached the maximum of $200$ backtracking steps and are excluded from both RMSE metrics.
Parentheses give the selected Armijo parameters $(\beta,\sigma,s)$.
Boldface marks the lowest best RMSE among the three routines using unrounded values;
selection ties are broken by final unregularized WLRA loss, with exact remaining ties marked jointly.}
\label{tab:full-train-cifar10-armijo-grid}
\end{table}

On full-training CIFAR-10, the best-performing routine depends on the configuration.
With the QR retraction, Projection Arc is the unique best routine in four of the nine configurations,
Feasible Direction in two, and Regularized Armijo in two; Feasible Direction and Regularized Armijo tie in the remaining configuration.
With the polar retraction, Feasible Direction is the unique best routine in five configurations
and Regularized Armijo in three, with the same two routines tied in the remaining configuration.
Both ties occur at $p=32$ and $\lambda=10^{-6}$ and persist after the unregularized-loss tie-breaker.
Of the $1458$ grid runs, $1276$ succeed and $182$ fail at the backtracking limit.
The QR failure counts are $11$, $3$, and $6$ for Regularized Armijo, Feasible Direction, and Projection Arc, respectively;
each polar routine has $54$ failures.
Thus, the RMSE comparisons are conditional on successful runs and should be read alongside the failure counts.
These Stage 1 results describe performance over $200$ iterations, not long-run superiority.

\clearpage

\subsection{Paired, One-Sided $t$-Tests Across the Experiment Results}

\textbf{Grid Search Experiments.}
To assess the aggregate differences across all the grid search experiments in this paper,
Table \ref{tab:sampled-mask050-paired-ttests} reports paired, one-sided $t$-tests
over the 144 matched retraction-rank-regularization configurations in
Tables \ref{tab:mnist-cifar-armijo-grid}, \ref{tab:mnist-cifar-mask075-armijo-grid},
\ref{tab:mnist-cifar-mask025-armijo-grid}, \ref{tab:full-train-armijo-grid}, and \ref{tab:full-train-cifar10-armijo-grid} on ``best RMSE'' and ``mean RMSE'', the two evaluation metrics common across all the tables.
Tables \ref{tab:mnist-cifar-armijo-grid}, \ref{tab:mnist-cifar-mask075-armijo-grid},
and \ref{tab:mnist-cifar-mask025-armijo-grid} each contribute 36 configurations.
Tables \ref{tab:full-train-armijo-grid} and \ref{tab:full-train-cifar10-armijo-grid} each contribute 18 configurations.
For each main routine and evaluation metric, we define the paired difference as the metric value for the main routine
minus the metric value for benchmark routine.
We test the alternative hypothesis that the mean paired difference is negative.

\begin{table}[!htbp]
\centering
\small
\begin{tabular}{llrrrc}
\hline
Comparison & Metric & mean difference & $t_{143}$ & one-sided $p$ & Outperforms? \\
\hline
\shortstack{Feasible Direction (Main Routine) vs\\Regularized Armijo (Benchmark)} & best RMSE & $-2.617\times 10^{-4}$ & -2.5414 & $6.05\times 10^{-3}$ & \textbf{Yes} \\
 & mean RMSE & $-4.821\times 10^{-5}$ & -2.4839 & $7.07\times 10^{-3}$ & \textbf{Yes} \\
\hline
\shortstack{Projection Arc (Main Routine) vs\\Regularized Armijo (Benchmark)} & best RMSE & $-2.472\times 10^{-4}$ & -2.3232 & 0.011 & \textbf{Yes} \\
 & mean RMSE & $-4.902\times 10^{-5}$ & -2.0029 & 0.024 & \textbf{Yes} \\
\hline
\end{tabular}

\caption{Paired, one-sided $t$-tests comparing Feasible Direction (main routine) and Projection Arc (main routine) with Regularized Armijo (benchmark)
across the 144 matched configurations in Tables \ref{tab:mnist-cifar-armijo-grid}, \ref{tab:mnist-cifar-mask075-armijo-grid}, \ref{tab:mnist-cifar-mask025-armijo-grid}, \ref{tab:full-train-armijo-grid}, and \ref{tab:full-train-cifar10-armijo-grid}.
The mean difference is the named main routine minus Regularized Armijo (benchmark),
so a negative value favors the main routine because lower RMSE values are preferred.
The alternative hypothesis is that the mean paired difference is negative.
``Outperforms'' is reported as Yes when the unadjusted one-sided $p$-value is below $0.05$;
no multiple-testing correction is applied.}
\label{tab:sampled-mask050-paired-ttests}
\end{table}

At the unadjusted $0.05$ significance level,
the tests support lower best RMSE and lower mean RMSE for both main routines than for the benchmark routine.
For the ``Feasible Direction'' routine, the mean differences are $-2.617\times 10^{-4}$ for best RMSE
($t_{143}=-2.5414$, $p=0.0061$)
and $-4.821\times 10^{-5}$ for mean RMSE
($t_{143}=-2.4839$, $p=0.0071$).
For the ``Projection Arc'' routine, the mean differences are $-2.472\times 10^{-4}$ for best RMSE
($t_{143}=-2.3232$, $p=0.0108$)
and $-4.902\times 10^{-5}$ for mean RMSE
($t_{143}=-2.0029$, $p=0.0235$).
These tests provide aggregate statistical evidence across the pooled configurations.
They do not by themselves establish the practical importance of the observed RMSE differences.

\textbf{Routine Comparison Experiments.}
Table \ref{tab:all-long-run-final-rmse-paired-ttests} applies the same paired-test design
to the final RMSE values from the sampled-dataset 1000-iteration experiments in
Figures \ref{fig:mnist-cifar-armijo-rmse-curves},
\ref{fig:mnist-cifar-mask075-armijo-rmse-curves}, and
\ref{fig:mnist-cifar-mask025-armijo-rmse-curves}.
Each figure contributes 36 matched configurations,
giving 108 pairs for each main-routine comparison.

\begin{table}[!htbp]
\centering
\small
\begin{tabular}{llrrrc}
\hline
Comparison & Metric & mean difference & $t_{107}$ & one-sided $p$ & Outperforms? \\
\hline
\shortstack{Feasible Direction (Main Routine) vs\\Regularized Armijo (Benchmark)} & final RMSE & $-2.463\times 10^{-4}$ & -0.9209 & 0.180 & No \\
\hline
\shortstack{Projection Arc (Main Routine) vs\\Regularized Armijo (Benchmark)} & final RMSE & $-2.206\times 10^{-4}$ & -0.8556 & 0.197 & No \\
\hline
\end{tabular}

\caption{Paired, one-sided $t$-tests comparing Feasible Direction (main routine) and Projection Arc (main routine) with Regularized Armijo (benchmark)
across the 108 matched configurations in
Figures \ref{fig:mnist-cifar-armijo-rmse-curves},
\ref{fig:mnist-cifar-mask075-armijo-rmse-curves}, and
\ref{fig:mnist-cifar-mask025-armijo-rmse-curves}.
The mean difference is the final RMSE for the named main routine minus that for Regularized Armijo (benchmark),
so a negative value favors the main routine because lower RMSE values are preferred.
The alternative hypothesis is that the mean paired difference is negative.
``Outperforms'' is reported as Yes when the unadjusted one-sided $p$-value is below $0.05$;
no multiple-testing correction is applied.}
\label{tab:all-long-run-final-rmse-paired-ttests}
\end{table}

At the unadjusted $0.05$ significance level,
the tests do not support lower final RMSE for either main routine than for the benchmark routine.
For the ``Feasible Direction'' routine, the mean difference is $-2.463\times 10^{-4}$
($t_{107}=-0.9209$, $p=0.1796$).
For the ``Projection Arc'' routine, the mean difference is $-2.206\times 10^{-4}$
($t_{107}=-0.8556$, $p=0.1971$).
Both mean differences are negative, but both $p$-values exceed the significance threshold.
Thus, the pooled long-run analysis does not provide statistically significant evidence
that either main routine has lower final RMSE than the benchmark routine.
This aggregate result does not rule out the configuration-specific advantages shown in
Figures \ref{fig:mnist-cifar-armijo-rmse-curves},
\ref{fig:mnist-cifar-mask075-armijo-rmse-curves}, and
\ref{fig:mnist-cifar-mask025-armijo-rmse-curves}.

\subsection{Our Choice of RMSE as the Primary Evaluation Metric}

The root mean square error (RMSE) is widely used to evaluate accuracy in matrix-completion tasks
\cite{mazumder-hastie-tibshirani-2010-spectral-regularization,
zhou-tao-2013-greedy-bilateral-sketch,
hsieh-olsen-2014-active-subspace-selection,
ganti-balzano-willett-2015-monotonic-single-index,
hastie-mazumder-lee-zadeh-2015-fast-als,
bhaskar-2016-probabilistic-quantized-matrix-completion,
wang-zhang-gu-2017-variance-reduction-matrix-recovery,
liu-mao-wong-2020-median-matrix-completion,
elmahdy-et-al-2020-hierarchical-graph-matrix-completion,
wang-et-al-2021-model-free-weighting,
salakhutdinov-mnih-2007-probabilistic-matrix-factorization,
salakhutdinov-mnih-2008-bayesian-pmf}.
We choose RMSE as our primary evaluation metric for two additional reasons.

First, Proposition \ref{prop-rgpa-convergence} states that every limit point of the sequence generated by
Algorithm \ref{algorithm-rgpa} is stationary when using Armijo Stepsize Rules
\ref{ur-Armijo-feasible} or \ref{ur-Armijo-projection-arc}.
It does not provide an asymptotic convergence-rate guarantee.
Accordingly, our experiments compare completion accuracy through RMSE rather than infer empirical convergence rates from objective values.

Moreover, Problems \ref{prob-wlra-reg-mfd} and \ref{prob-wlra-constrained} have different objective functions.
Their objective values are therefore not directly comparable across routines.
In particular, the benchmark objective (the objective function of Problem \ref{prob-wlra-reg-mfd}) includes the regularization term $\lambda\|\mathbf{x}\|^2$, whereas the main-routine objective (the objective function of Problem \ref{prob-wlra-constrained}) does not.
Thus, the benchmark objective can differ from the main-routine objective at initialization, and its value varies with the regularization parameter $\lambda$.
RMSE applies the same definition to all three routines and allows a direct comparison on a common scale.
\FloatBarrier


\section{Reproducibility Details}
\label{sec-reproducibility-details}

\subsection{Datasets and Preprocessing}
The experiments use the training splits of MNIST and CIFAR-10.
For the sampled experiments, the data artifacts contain 6000 balanced MNIST images and 5000 balanced CIFAR-10 images.
The full-training Stage 1 experiments use all 60000 MNIST training images and all 50000 CIFAR-10 training images.
Each image is flattened and stored as one column of a matrix.
The resulting sampled matrices have sizes $784\times 6000$ for MNIST and $1024\times 5000$ for grayscale CIFAR-10.
The full-training matrices have sizes $784\times60000$ for MNIST and $1024\times50000$ for grayscale CIFAR-10.
Each serialized data artifact is a dictionary containing the complete matrix \texttt{M\_full}, the masked matrix \texttt{M\_masked}, and the binary weight matrix \texttt{W}.

We use masking rates $0.25$, $0.5$, and $0.75$ for the sampled datasets and masking rate $0.5$ for both full-training datasets.
The weight is zero at a masked entry and one otherwise.
The masked matrix is obtained by multiplying the complete matrix by the weight matrix.
We use random seed $42$ for dataset sampling and masking.
All routines for a fixed dataset, retraction, rank, and regularization value receive clones of the same truncated-singular-value-decomposition initialization.

\subsection{Grid Searches and Long Runs}
The grid searches use ranks $p\in\{32,64,128\}$ and regularization values
$\lambda\in\{10^{-2},10^{-4},10^{-6}\}$ with the QR and polar retractions.
For each routine and retraction-$p$-$\lambda$ configuration, the Armijo grid is
\[
(\beta,\sigma,s)\in
\{0.5,0.75,0.9\}\times
\{0.25,0.5,0.75\}\times
\{0.3,0.5,1.0\}.
\]
Thus, each routine has 27 Armijo grid points per configuration.
Every grid-search run uses 200 iterations, maximum backtracking count 200, Armijo tolerance $10^{-12}$, CPU execution, and \texttt{float32} arithmetic.
For the main routines, the constraint radius is computed as
$\|A\odot W\|_F/\sqrt{\lambda}$.

For each routine and configuration, successful grid points are ranked by final RMSE and then by final unregularized WLRA loss.
For the sampled-dataset routine comparisons reported here, the selected Armijo parameters seed the 1000-iteration convergence runs.
The full-training tables report only Stage 1 grid-search results.
If a selected long run fails, the runner records the failed attempt and tries the next successful source-grid candidate.
The raw experiment records retain failed runs.
The table generator excludes failed records from best and mean RMSE and reports their number separately.

\subsection{Metrics and Statistical Tests}
\label{sec-armijo-evaluation-metric}
RMSE is evaluated only on masked entries.
For a reconstructed matrix $\widehat A$, it is
\[
\operatorname{RMSE}
=
\left(
\frac{\sum_{i,j}(1-w_{i,j})(a_{i,j}-\widehat a_{i,j})^2}
{\sum_{i,j}(1-w_{i,j})}
\right)^{1/2}.
\]
The grid-search tables report the lowest final RMSE, the mean final RMSE over successful grid points, and the number of failed grid points.
Tables based on the sampled datasets also report the mean number of Armijo inequality evaluations.
For a fixed routine and configuration, let $\mathcal S$ be the set of successful grid runs and
let $E_{g,k}$ count the Armijo inequality checks performed at iteration $k$ of run $g$.
The reported mean is
\[
\overline E
=\frac{1}{|\mathcal S|}\sum_{g\in\mathcal S}
\left(\frac{1}{200}\sum_{k=0}^{199}E_{g,k}\right).
\]
Thus, the statistic averages checks per iteration within each successful run and then across successful grid runs.
The count includes rejected checks and the accepted check when performed; a zero-step exit adds no inequality check.
Failed runs are excluded from this average and their number is reported separately.
This statistic is neither the total number of checks in a $200$-iteration run nor a wall-clock cost.

The grid-search paired tests pool the 144 retraction-$p$-$\lambda$ configurations reported in Tables
\ref{tab:mnist-cifar-armijo-grid}, \ref{tab:mnist-cifar-mask075-armijo-grid},
\ref{tab:mnist-cifar-mask025-armijo-grid}, \ref{tab:full-train-armijo-grid}, and \ref{tab:full-train-cifar10-armijo-grid}.
The long-run paired tests pool the final RMSE from the 108 matched sampled-dataset configurations reported in Figures
\ref{fig:mnist-cifar-armijo-rmse-curves},
\ref{fig:mnist-cifar-mask075-armijo-rmse-curves}, and
\ref{fig:mnist-cifar-mask025-armijo-rmse-curves}.
For each main routine, the paired difference is the main-routine metric minus the Regularized Armijo metric.
The tests use the one-sided alternative that the mean difference is negative and the unadjusted significance threshold $0.05$.
The implementation uses \texttt{scipy.stats.ttest\_rel} with \texttt{alternative="less"}.

\subsection{Hardware and Software}
The reported experiments were executed on an Apple M5 Pro CPU with 18 cores (6 super and 12 performance cores) and 48 GB of memory.
The recorded software environment uses Python 3.10.9, PyTorch 2.12.0, NumPy 1.23.5, SciPy 1.10.1, and Matplotlib 3.7.2 on macOS.
The exhaustive shards set
\texttt{OMP\_NUM\_THREADS=2},
\texttt{MKL\_NUM\_THREADS=2},
\texttt{VECLIB\_MAXIMUM\_THREADS=2}, and
\texttt{NUMEXPR\_NUM\_THREADS=2}.

\subsection{Commands and Artifact Provenance}
The QR and polar grid searches are launched from the repository root with
\path{experiments/runners/armijo/grid_search_qr.py} and
\path{experiments/runners/armijo/grid_search_polar.py}, respectively.
The command-line arguments specify the dataset artifact, routine, parameter grid, ranks, regularization values, iteration count, automatic radius, RMSE mode, device, dtype, and Armijo tolerance.
The selected 1000-iteration runs use
\path{experiments/runners/armijo/run_selected_history_grid.py}.
The table summaries use
\path{experiments/summaries/armijo/summarize_armijo_convergence_methods.py},
the convergence figures use
\path{experiments/plotting/armijo/curves/plot_convergence_grid.py},
and Tables \ref{tab:sampled-mask050-paired-ttests} and
\ref{tab:all-long-run-final-rmse-paired-ttests} use
\path{experiments/summaries/armijo/paired_t_tests.py}.

The full-training CIFAR-10 table data use \path{experiments/summaries/armijo/summarize_full_train_stage1.py}; the expanded grid-search tests use the paired-test program with all eight dataset prefixes specified explicitly.
The public release includes the derived table data and paired-test results.
The grid-search loader selects the six Stage 1 shards explicitly to exclude the separately recorded 1000-iteration histories.

The public code release contains the implementation, tests, experiment runners, summary and plotting code, processed table data, and reference figures.
It excludes the private research repository's Git history, local logs, private manuscript evidence, and large serialized dataset tensors.
The public MNIST and CIFAR-10 training data must therefore be obtained separately and converted to the three-tensor format described above before rerunning the experiments.
The included processed tables and figures are reference outputs rather than substitutes for an independent rerun.
To reproduce them, users must generate the dataset artifacts, rerun the grid-search and selected-history experiments, and then apply the included summary, paired-test, and plotting programs to the newly generated raw records.

\end{document}